\documentclass[11pt]{amsart}

\usepackage{amsmath, amssymb, amsfonts,enumerate}
\usepackage[all]{xy}
\usepackage{amscd,stmaryrd}
\usepackage{comment}
\usepackage{mathtools}
\usepackage{tikz} 
\usepackage{tikz-cd} 
\tikzcdset{diagrams={nodes={inner sep=7.5pt}}}

\usepackage{url}
\usepackage{hyperref}

\newtheorem{theorem}{Theorem}[section]
\newtheorem{lemma}[theorem]{Lemma}
\newtheorem{corollary}[theorem]{Corollary}

\newtheorem{theoremletter}{Theorem}

 \theoremstyle{definition}
 \newtheorem{definition}[theorem]{Definition} 
 \newtheorem{remark}[theorem]{Remark}

 \newtheorem{example}[theorem]{Example}

  \newtheorem*{example*}{Example}

\numberwithin{equation}{section}
 
\newcommand {\N}{\mathbb{N}} 
\newcommand {\Z}{\mathbb{Z}}


%

\DeclareMathOperator{\Ker}{Ker}

\DeclareMathOperator{\im}{Im}

\DeclareMathOperator{\Id}{Id}

\begin{document}
\title[Asynchronous non-uniform cellular automata]{On reversible asynchronous non-uniform cellular automata}   
\author[Xuan Kien Phung]{Xuan Kien Phung}
\address{Département d'informatique et de recherche opérationnelle,  Université de Montréal, Montréal, Québec, H3T 1J4, Canada.}
\email{phungxuankien1@gmail.com}   
\subjclass[2020]{05C25, 20F69, 37B10, 37B15, 37B51, 68Q80}
\keywords{amenable group, residually finite group, surjunctivity, Gottschalk's conjecture, cellular automata, pre-injectivity, stable injectivity, reversibility}

\begin{abstract}
We study the class of asynchronous non-uniform cellular automata (ANUCA) over an arbitrary group universe with multiple local transition rules.
We introduce the notion of stable injectivity, stable reversibility,  stable post-surjectivity and investigate several dynamical properties of such automata. In particular, we establish the equivalence between reversibility, stable reversibility, and stable injectivity for ANUCA. We also prove  the invertibility of several classes of injective and stably injective ANUCA. Counter-examples are given to highlight the differences between cellular automata and ANUCA. 
\end{abstract}
\date{\today}
\maketitle
  
\setcounter{tocdepth}{1}

\section{Introduction}  

We briefly recall notions of symbolic dynamics. 
Given a set $A$ equipped with the discrete topology and a group $G$, 
a \emph{configuration} $c \in A^G$ is simply a map $x \colon G \to A$.  Two configurations $x,y  \in A^G$ are \emph{asymptotic} if for some finite subset $E \subset G$, $x\vert_{G \setminus E}=y\vert_{G \setminus E}$.  
The \emph{Bernoulli shift} action $G \times A^G \to A^G$ is defined by $(g,x) \mapsto g x$, 
where $(gx)(h) \coloneqq  x(g^{-1}h)$ for  $g,h \in G$ and $x \in A^G$. 
The space $A^G$ is equipped with the \emph{prodiscrete topology}, i.e., the smallest
topology for which the projection $A^G \to A^{\{g\}}$  is continuous for every $g \in G$. When $A$ is finite, the Tychonoff theorem implies that $A^G$ is a compact Hausdorff space. For every $x \in A^G$, we denote  
\begin{equation*}
    \Sigma(x) \coloneqq \overline{\{gx \colon g \in G\}} \subset A^G. 
\end{equation*}
\par
Following von Neumann \cite{neumann}, a \emph{cellular automaton} (CA)  over a group $G$, the \emph{universe}, and a set $A$, the \emph{alphabet}, is a map
$\tau \colon A^G \to A^G$ admitting a finite \emph{memory set} $M \subset G$
and a \emph{local defining map} $\mu \colon A^M \to A$ such that 
\begin{equation*} 
\label{e:local-property}
(\tau(x))(g) = \mu((g^{-1} c )\vert_M)  \quad  \text{for all } x \in A^G \text{ and } g \in G.
\end{equation*} 
\par 
Equivalently, a map $\tau \colon A^G \to A^G$ is a CA if and only it is $G$-equivariant and uniformly continuous (cf.~\cite{hedlund-csc}, \cite{hedlund}).  Various physical phenomena in fluid dynamics as well as biological organisms consisting of homogeneous cells whose evolution is described by the same local transition rule can be simulated by CA. In computer science, CA are studied as powerful models
of computation. For instance,  the famous two-dimensional CA Game of Life defined by Conway \cite{GOL} is Turing complete. The mathematical theory of CA also leads to a surprising characterization of amenable groups (see Section~\ref{s:sofic-groups}) in terms of the Garden of Eden theorem for CA (cf. \cite{moore}, \cite{myhill}, \cite{ceccherini}, \cite{bartholdi-kielak}).  Amenable groups were also  invented by von Neumann \cite{neumann-amenable}.  
\par  
To generalize the notion of CA, we introduce the class of asynchronous non-uniform cellular automata (ANUCA) which allows multiple local defining maps. Our definition extends the class of $\nu$-CA studied in \cite{Den-12a}, \cite{Den-12b} where the group  universe is $\Z$ or $\Z^d$ for $d \geq 2$.  

\begin{definition}
\label{d:most-general-def-asyn-ca}
Let $G$ be a group and let  $A$ be a set. Let $M \subset G$ be a subset and let $S = A^{A^M}$ be the set of all maps $A^M \to A$. Given $s \in S^G$, the  \emph{asynchronous non-uniform cellular automaton} (ANUCA) $\sigma_s \colon A^G \to A^G$ associated with $s$ is defined for all $x \in A^G$, $g \in G$ by the formula: 
\begin{equation*} 
\sigma_s(x)(g)\coloneqq  
    s(g)((g^{-1}x)  
	\vert_M).
\end{equation*} 
\par 
The set $M$ is called the \emph{memory} of $\sigma_s$.  
The ANUCA $\sigma_s$ is said to be \emph{stably injective} if  $\sigma_p$ is injective  for every $p \in \Sigma(s)$. We say that $\sigma_s$ is \emph{invertible} if it is bijective and the inverse map  $\sigma_s^{-1}$ is an ANUCA with \emph{finite} memory. 
\par 
We say that $\sigma_s$ is \textit{reversible}, or \emph{left-invertible}, if  there exists an ANUCA with \textit{finite} memory $\tau \colon A^G \to A^G$ such that $\tau \circ \sigma= \Id$. Moreover, $\sigma_s$ is \emph{stably reversible} if there exist $N\subset G$ finite and $q \in T^G$ where $T=A^{A^N}$ such that for every $p \in \Sigma(s)$, we can find $w \in \Sigma(q)$ such that $\sigma_w \circ \sigma_p=\Id$.
\end{definition} 
\par
The configuration $s \in S^G$ is called the \textit{configuration of local defining maps} of the ANUCA  $\sigma_s$. Note that every CA is an ANUCA with finite memory and constant configuration of local defining maps. Moreover, a CA is injective if and only if it is stably injective since   $\Sigma(c)=\{c\}$ for all constant configuration~$c$. 
It is clear from the above definition for ANUCA that 
\begin{align}
    \label{e:intro-equiv-1}
\text{Stable reversibility}\Rightarrow\text{Reversibility}\Rightarrow\text{Injectivity}\Leftarrow\text{Stable injectivity}.  
\end{align}
\par 
The first main result of the paper is the following characterization of reversible ANUCA in terms of stable injectivity (see Section~\ref{s:direct-finintess}).  

 \begin{theoremletter}
\label{t:intro-characterization-reversible-ANUCA} 
Let $M$ be a finite subset of a countable group~$G$. Let $A$ be a finite set and let $S=A^{A^M}$. Let $s \in S^G$. Then the following are equivalent: 
\begin{enumerate} [\rm (i)]
    \item $\sigma_s$ is reversible; 
    \item $\sigma_s$ is stably reversible;  
    \item $\sigma_s$ is stably injective. 
\end{enumerate}
\end{theoremletter}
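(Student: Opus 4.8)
The strategy is to close the cycle $\mathrm{(i)}\Rightarrow\mathrm{(iii)}\Rightarrow\mathrm{(ii)}\Rightarrow\mathrm{(i)}$; since $\mathrm{(ii)}\Rightarrow\mathrm{(i)}$ is already recorded in \eqref{e:intro-equiv-1}, it suffices to prove $\mathrm{(i)}\Rightarrow\mathrm{(iii)}$ and $\mathrm{(iii)}\Rightarrow\mathrm{(ii)}$. Two elementary facts will be used throughout. First, the conjugation identity $\sigma_{gr}(x)=g\,\sigma_r(g^{-1}x)$ for all $g\in G$, $r\in S^G$, $x\in A^G$, immediate from the definitions; it implies that $\sigma_{gr}$ is injective (resp.\ admits a left inverse) exactly when $\sigma_r$ is, and that $\sigma_{gt}\circ\sigma_{gs}=\Id$ whenever $\sigma_t\circ\sigma_s=\Id$. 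Second, since $A$, $M$, $N$ are finite, the sets $S=A^{A^M}$ and $T=A^{A^N}$ are finite, so $S^G$, $T^G$ (and their closed subsets $\Sigma(s)$, $\Sigma(t)$) are compact metrizable, and each evaluation $(r,x)\mapsto\sigma_r(x)(g)$ ($g\in G$) is continuous, depending only on the finitely many coordinates $r(g)$ and $x|_{gM}$.

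For $\mathrm{(i)}\Rightarrow\mathrm{(iii)}$ I would argue as follows. Assume $\sigma_t\circ\sigma_s=\Id$ with $t\in T^G$ and $N$ finite, and fix $p\in\Sigma(s)$. Choose $g_n\in G$ with $g_ns\to p$, and, by compactness of $T^G$, pass to a subsequence along which $g_nt\to q$ for some $q\in\Sigma(t)$. By the conjugation identity, $\sigma_{g_nt}\circ\sigma_{g_ns}=\Id$ for every $n$; letting $n\to\infty$ and using joint continuity of $(s',t',x)\mapsto(\sigma_{t'}\circ\sigma_{s'})(x)(g)$, one gets $\sigma_q\circ\sigma_p=\Id$, so $\sigma_p$ is injective. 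Hence $\sigma_s$ is stably injective.

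For $\mathrm{(iii)}\Rightarrow\mathrm{(ii)}$ the plan is to build a uniform finite-memory inverse by a compactness argument. Assume every $\sigma_p$, $p\in\Sigma(s)$, is injective, and consider $\Phi\colon\Sigma(s)\times A^G\to\Sigma(s)\times A^G$, $\Phi(p,x)=(p,\sigma_p(x))$, which is continuous, $G$-equivariant and injective; as its domain is compact and its codomain Hausdorff, $\Phi$ is a homeomorphism onto its image $Y$, which is compact and $G$-invariant. Write $\Phi^{-1}(p,y)=(p,\psi(p,y))$, so $\psi\colon Y\to A^G$ is continuous, $G$-equivariant and satisfies $\psi(r,\sigma_r(y))=y$. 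The evaluation $(p,y)\mapsto\psi(p,y)(1_G)$ is a continuous map from the compact totally disconnected space $Y\subset S^G\times A^G$ to the finite set $A$, so a routine Curtis--Hedlund--Lyndon-type compactness argument yields a finite $F\subset G$ and a map $\mu_0$, defined on the image of $Y$ under $(p,y)\mapsto(p|_F,y|_F)$, with $\psi(p,y)(1_G)=\mu_0(p|_F,y|_F)$; I then extend $\mu_0$ arbitrarily to $\mu\colon S^F\times A^F\to A$, and equivariance gives $\psi(p,y)(g)=\mu_0\big((g^{-1}p)|_F,(g^{-1}y)|_F\big)$. Now put $N:=F$, $T:=A^{A^N}$, let $\Theta\colon S^G\to T^G$ be the continuous $G$-equivariant map $\Theta(r)(g)(z):=\mu\big((g^{-1}r)|_F,z\big)$, and set $q:=\Theta(s)$. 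For $p\in\Sigma(s)$, continuity and equivariance of $\Theta$ give $w:=\Theta(p)\in\Sigma(\Theta(s))=\Sigma(q)$, and for $x\in A^G$, $g\in G$ the conjugation identity $g^{-1}\sigma_p(x)=\sigma_{g^{-1}p}(g^{-1}x)$ yields $(g^{-1}p,g^{-1}\sigma_p(x))=\Phi(g^{-1}p,g^{-1}x)\in Y$, whence
\[
\sigma_w(\sigma_p(x))(g)=\mu\big((g^{-1}p)|_F,(g^{-1}\sigma_p(x))|_F\big)=\psi\big(g^{-1}p,\sigma_{g^{-1}p}(g^{-1}x)\big)(1_G)=(g^{-1}x)(1_G)=x(g).
\]
Thus $\sigma_w\circ\sigma_p=\Id$, so $\sigma_s$ is stably reversible.

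The step I expect to be the main obstacle is the one just sketched in $\mathrm{(iii)}\Rightarrow\mathrm{(ii)}$: passing from the individual inverses $\sigma_p^{-1}$ to a \emph{single} finite memory $N$ and a \emph{single} configuration $q$ witnessing stable reversibility for all $p\in\Sigma(s)$ simultaneously. This is where the compactness of $\Sigma(s)\times A^G$ and the finiteness of $A$ are genuinely used (to bound the memory uniformly), and where one must be careful to assemble the left inverses $G$-equivariantly, through the sliding-block map $\Theta$, so that each $w$ lies in $\Sigma(q)$ rather than merely in $T^G$; the arbitrary extension of $\mu_0$ to $\mu$ is then harmless, since those extra values are never read when $\sigma_w$ is composed with $\sigma_p$.
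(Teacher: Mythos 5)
Your proof is correct. The implication (i)$\Rightarrow$(iii) is essentially the paper's Theorem~\ref{t:direct-post-injective}: translate the identity $\sigma_t\circ\sigma_s=\Id$ by $g_n$, extract a convergent subsequence of $(g_nt)_{n}$ by compactness of $T^G$, and pass to the limit using the local maps. Where you genuinely diverge is (iii)$\Rightarrow$(ii). The paper proceeds in two steps: Theorem~\ref{t:reversible-finite-alpha} first produces, by a contradiction/diagonal-subsequence argument, a single finite window $N$ such that $\sigma_s^{-1}(y)(g)$ depends only on $y\vert_{gN}$ for $y$ in the image $\Gamma=\sigma_s(A^G)$, yielding one left inverse $\sigma_q$ of $\sigma_s$; Theorem~\ref{t:reversible-finite-alpha-beta} then obtains a left inverse of each $\sigma_p$, $p\in\Sigma(s)$, as a limit of translates $g_nq$. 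You instead run one Curtis--Hedlund--Lyndon compactness argument on the whole family at once, via the homeomorphism $\Phi(p,x)=(p,\sigma_p(x))$ of the compact space $\Sigma(s)\times A^G$ onto its image $Y$; this gives the uniform window $F$ for all $p\in\Sigma(s)$ simultaneously and, crucially, packages the inverses into a $G$-equivariant continuous sliding-block map $\Theta\colon S^G\to T^G$, so that $w=\Theta(p)\in\Sigma(\Theta(s))$ comes for free from equivariance and continuity of $\Theta$ rather than from a second subsequence extraction. Your route is more conceptual and somewhat shorter (it bypasses the explicit construction of $q$ via the maps $\varphi_g$ and the second limit argument); the paper's route has the advantage of isolating reversibility of $\sigma_s$ alone (Theorem~\ref{t:reversible-finite-alpha}) as a reusable intermediate statement, which it invokes again in Theorems~\ref{c:application-disturbed-ca}, \ref{c:application-disturbed-ca-amenable} and~\ref{t:intro-singularity}. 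Two small points worth making explicit in your write-up: injectivity of $\Phi$ on all of $\Sigma(s)\times A^G$ is exactly where stable (rather than plain) injectivity enters, and the $G$-invariance and closedness of $\Sigma(s)$ (cf.\ Lemma~\ref{l:post-injective-stable}) are what guarantee both that $Y$ is $G$-invariant and that $(g^{-1}p,g^{-1}\sigma_p(x))$ lies in $Y$, so that $\mu$ is only ever evaluated where it agrees with $\mu_0$.
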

\par 
Hence, the first two and the last properties in \eqref{e:intro-equiv-1} are in fact equivalent and Example~\ref{ex:1}-\ref{ex:2} show that are strictly stronger than Injectivity.   
\par 
Our next results concern    Gottschalk's surjunctivity conjecture \cite{gottschalk} which asserts that over any universe, any injective CA with finite alphabet must be surjective. 
Over sofic group universes, the surjunctivity conjecture was famously shown by Gromov-Weiss \cite{gromov-esav},  \cite{weiss-sgds} (see also \cite{csc-sofic-linear},  \cite{elek}, \cite{phung-geometric},  \cite{phung-weakly}). The class of sofic groups was introduced by Gromov  \cite{gromov-esav} as a  generalization of residually finite groups and amenable groups (see Section~\ref{s:sofic-groups}).
\par  
The situation for ANUCA is more complicated. Example~\ref{ex:3} gives a simple one-dimensional stably injective ANUCA which is not surjective.  However, when the configuration of local defining maps of an ANUCA is asymptotic to a constant configuration, we establish the   \emph{invertibility}, i.e. bijectivity plus reversibility, for stably injective ANUCA and even for injective ANUCA over a suitable universe (see Section~\ref{s:sofic-groups}, Theorem~\ref{c:application-disturbed-ca}, Theorem~\ref{c:application-disturbed-ca-amenable}).  
\par 
\begin{theoremletter}
\label{t:main-B}
Let $M$  be a finite subset of a countable group $G$. Let $A$ be a finite set and let $S=A^{A^M}$. Let $s \in S^G$ be asymptotic to a constant configuration $c \in S^G$. Then $\sigma_s$ is invertible in each of the following cases:
\begin{enumerate} [\rm (i)]
\item $G$ is amenable, e.g. an abelian group, and $\sigma_s$ is injective;  
\item $G$ is residually finite, e.g. a free group, and $\sigma_s, \sigma_c$ are injective.
\end{enumerate}
\end{theoremletter}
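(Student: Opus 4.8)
The plan is to reduce the statement to Theorem~\ref{t:intro-characterization-reversible-ANUCA} together with the surjunctivity of cellular automata over sofic groups. We may assume $G$ is infinite, the case of finite $G$ being immediate since then every ANUCA has finite memory $\subseteq G$. By the definition of invertibility it suffices to show that $\sigma_s$ is bijective and reversible, for then the finite-memory left inverse provided by reversibility must coincide with $\sigma_s^{-1}$. By Theorem~\ref{t:intro-characterization-reversible-ANUCA} we may replace ``reversible'' by ``stably injective'', so the goal becomes: $\sigma_s$ is bijective and stably injective. For the second property, note that since $s$ is asymptotic to the constant configuration $c$, the translates $gs$ converge to $c$ as $g$ leaves every finite subset of $G$, hence $\Sigma(s)=\{gs:g\in G\}\cup\{c\}$; moreover a direct computation gives $\sigma_{gs}=g\circ\sigma_s\circ g^{-1}$, so $\sigma_{gs}$ is injective if and only if $\sigma_s$ is. Therefore $\sigma_s$ is stably injective precisely when both $\sigma_s$ and $\sigma_c$ are injective.

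First I would establish that $\sigma_c$ is an invertible cellular automaton. In case (ii) $\sigma_c$ is injective by hypothesis; in case (i) injectivity of $\sigma_c$ must be derived from injectivity of $\sigma_s$. Granting injectivity of $\sigma_c$, since $G$ is sofic (residually finite, resp.\ amenable) and $A$ is finite, surjunctivity (\cite{gromov-esav},\cite{weiss-sgds}, or the classical argument over residually finite, resp.\ amenable, groups) shows $\sigma_c$ is surjective, hence bijective; then compactness of $A^G$ makes $\sigma_c^{-1}$ continuous and $G$-equivariant, so $\sigma_c^{-1}$ is a cellular automaton with finite memory by the Curtis--Hedlund--Lyndon theorem (\cite{hedlund-csc},\cite{hedlund}), i.e.\ $\sigma_c$ is invertible. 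I expect the deduction of injectivity of $\sigma_c$ from injectivity of $\sigma_s$ in case (i) to be the \emph{main obstacle}. One checks easily that $\sigma_s$ injective (hence pre-injective) forces $\sigma_c$ pre-injective: if $x\neq y$ are asymptotic with $\sigma_c(x)=\sigma_c(y)$, translate by a suitable $g\in G$ so that the finite difference set of $gx,gy$ avoids $EM$; then, since $\sigma_s$ and $\sigma_c$ share the local rule off $E$ while $gx,gy$ agree on $EM$, one gets $\sigma_s(gx)=\sigma_s(gy)$ with $gx\neq gy$ asymptotic, contradicting pre-injectivity of $\sigma_s$. The passage from pre-injectivity to injectivity of $\sigma_c$ is where amenability must be used essentially: it cannot be done by reduction to finite quotients of $G$, since an ANUCA may be injective on every finite quotient and yet fail to be injective on $A^G$ (as happens for the one-site perturbation of the XOR automaton over $\Z$). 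The expected argument is a F{\o}lner-set counting in the spirit of the Garden of Eden theorem (\cite{ceccherini}): were $\sigma_c$ non-injective, the degeneracy it carries ``at infinity'' could not be cancelled by the finite modification recorded on $E$, because the $|A|^{|E|}$ local conditions imposed at the sites of $E$ are too few to separate the large family of configuration pairs whose $\sigma_c$-images already agree off $E$, and this would produce a collision of $\sigma_s$.

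Granting that $\sigma_c$ is an invertible cellular automaton, I would prove surjectivity of $\sigma_s$ by a finite boundary-correction argument (valid over an arbitrary group). Fix a finite memory $N$ of $\sigma_c^{-1}$. Given $z\in A^G$ and $u\in A^E$, let $z_u\in A^G$ agree with $z$ off $E$ and equal $u$ on $E$, and set $\Phi(u):=\sigma_s(\sigma_c^{-1}(z_u))\vert_E\in A^E$; this depends only on $u$ and on the restriction of $z$ to a fixed finite subset of $G$ determined by $E,M,N$. If $\Phi(u)=\Phi(u')$, then $\sigma_s(\sigma_c^{-1}(z_u))$ and $\sigma_s(\sigma_c^{-1}(z_{u'}))$ agree on $E$ by assumption and off $E$ because both equal $z$ there (again $\sigma_s$ and $\sigma_c$ share the local rule off $E$), so they coincide, while $\sigma_c^{-1}(z_u)\neq\sigma_c^{-1}(z_{u'})$ whenever $u\neq u'$; injectivity of $\sigma_s$ thus forces $\Phi\colon A^E\to A^E$ to be injective, hence surjective as a self-map of a finite set. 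Choosing $u$ with $\Phi(u)=z\vert_E$ gives $\sigma_s(\sigma_c^{-1}(z_u))=z$, so $\sigma_s$ is surjective; being also injective, it is bijective.

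Finally, $\sigma_s$ is bijective by the previous step and stably injective by the first paragraph (both $\sigma_s$ and $\sigma_c$ being injective). Theorem~\ref{t:intro-characterization-reversible-ANUCA} then furnishes a finite-memory ANUCA $\tau$ with $\tau\circ\sigma_s=\Id$, and bijectivity of $\sigma_s$ forces $\tau=\sigma_s^{-1}$; hence $\sigma_s$ is invertible, proving both (i) and (ii). The only ingredient beyond Theorem~\ref{t:intro-characterization-reversible-ANUCA}, surjunctivity, and the Garden of Eden theorem is the transfer of injectivity from $\sigma_s$ to $\sigma_c$ in the amenable case, which I regard as the crux of the proof.
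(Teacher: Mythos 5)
Your reduction to ``bijective plus stably injective'', your identification of $\Sigma(s)=\{gs:g\in G\}\cup\{c\}$, and your treatment of case (ii) are all correct. In fact your surjectivity argument in case (ii) --- the map $\Phi\colon A^E\to A^E$, $\Phi(u)=\sigma_s(\sigma_c^{-1}(z_u))\vert_E$, which is injective because $\sigma_s$ and $\sigma_c$ agree off $E$ and $\sigma_s$ is injective, hence surjective, producing a preimage of $z$ --- is a genuinely different and cleaner route than the paper's, which instead first establishes reversibility of $\sigma_s$ and then runs a periodization argument through finite quotients of $G$ (Lemma~\ref{l:separation-forte-res}, the maps $\Psi_{K,s},\Psi_{K,t}$, and Lemma~\ref{l:induced-local-map-reversible-1}). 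Your argument needs only that $\sigma_c$ is a bijective CA, and works over any group once that is known.

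The genuine gap is exactly where you flagged it: in case (i) you never prove that $\sigma_c$ is injective. What you do prove (correctly) is that $\sigma_c$ is pre-injective, and the route you then sketch --- ``the passage from pre-injectivity to injectivity of $\sigma_c$ is where amenability must be used'' --- is not a valid principle: over an amenable group, pre-injectivity of a CA is equivalent to \emph{surjectivity} (the Garden of Eden theorem), not to injectivity. The XOR automaton $x\mapsto (x_n+x_{n+1})_n$ over $\Z$ is pre-injective and surjective but not injective, so no Følner counting can upgrade pre-injectivity of $\sigma_c$ \emph{alone} to injectivity; the injectivity of $\sigma_s$ must re-enter the argument quantitatively. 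The paper closes this gap as follows: from pre-injectivity and the Garden of Eden theorem, $\sigma_c$ is surjective; then for a cylinder $V=\{x : x\vert_{G\setminus N}=z\vert_{G\setminus N}\}$ with $E\subset N$ and $U=\sigma_c^{-1}(V)$, surjectivity of $\sigma_c$ gives $\vert U\vert\geq\vert V\vert$, while $\sigma_s(U)\subset V$ (since $s=c$ off $N$) and injectivity of $\sigma_s$ give $\vert U\vert=\vert\sigma_s(U)\vert\leq\vert V\vert$; hence $\vert U\vert=\vert V\vert$, which yields \emph{simultaneously} $\sigma_s(U)=V$ (so $\sigma_s$ has dense, hence closed, hence full image) and $\vert\sigma_c^{-1}(V)\vert=\vert V\vert$ for every cylinder (so $\sigma_c$ is injective). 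Note that this counting step also replaces your boundary-correction argument in case (i), since it does not presuppose bijectivity of $\sigma_c$. With that substitution your outline becomes a complete proof; as written, case (i) rests on an unproved and, in the form stated, unprovable implication.
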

\par 
 
To illustrate the case of higher dimensional ANUCA, i.e., when the universe is $\Z^d$, we can prove the invertibility of  stably injective ANUCA where the configuration of local defining maps is no longer required to be asymptotically constant but we only need the mild condition of \emph{bounded singularity} (see  Section~\ref{s:bounded-singularity-section}).

\begin{theoremletter}
\label{t:intro-singularity}
Let $M \subset \Z^d$ be a finite subset. Let $A$ be a finite set and let $S=A^{A^M}$.  Suppose that  for some $s \in S^{\Z^d}$, the ANUCA  $\sigma_s$ is stably injective with  bounded singularity. 
Then $\sigma_s$ is invertible. 
\end{theoremletter}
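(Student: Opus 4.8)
The plan is to reduce Theorem~\ref{t:intro-singularity} to the case already handled by combining the \emph{bounded singularity} hypothesis with a compactness argument over $\Z^d$, and then to invoke the machinery behind Theorem~\ref{t:main-B} locally. First I would fix notation: say the configuration $s \in S^{\Z^d}$ has bounded singularity, meaning (as defined in Section~\ref{s:bounded-singularity-section}) that there is a finite subset $D \subset \Z^d$ and a constant configuration $c \in S^{\Z^d}$ with $s(g) = c(g)$ for every $g$ outside a union of translates of $D$ that are ``sparsely'' placed — more precisely, the set of singular sites (where $s$ disagrees with the ambient constant rule) meets every sufficiently large box in a controlled way. The point of this hypothesis is that, although $s$ is no longer asymptotically constant, \emph{locally} it looks like a finite perturbation of the constant configuration $c$: every large enough window of $\Z^d$ contains at most a bounded cluster of singular sites surrounded by a thick shell where $\sigma_s$ agrees with the honest cellular automaton $\sigma_c$.

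The key steps, in order. (1) Observe that stable injectivity of $\sigma_s$ forces $\sigma_c$ to be injective (since $c \in \Sigma(s)$ — the constant configuration $c$ lies in the orbit closure of $s$ because the singular sites of $s$ can be translated off to infinity along $\Z^d$, and limits of translates of $s$ that ``escape'' all singularities are exactly $c$). By surjunctivity of injective CA over the amenable group $\Z^d$, $\sigma_c$ is bijective and, being injective with finite memory over $\Z^d$, it is reversible; let $N \subset \Z^d$ be a memory set for $\sigma_c^{-1}$. (2) Next, establish surjectivity of $\sigma_s$ itself: given a target configuration $y \in A^{\Z^d}$, I would construct a preimage by a patching argument. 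On the bulk (far from singular sites) use $\sigma_c^{-1}$; near each bounded cluster of singularities, the restriction of $\sigma_s$ to a large box behaves like an injective CA perturbed on a finite set, so by a finite-dimensional linear-algebra/counting argument (injectivity of a map between equal finite sets $A^{\text{box}} \to A^{\text{box}}$ up to boundary data, exactly as in the proof of Theorem~\ref{t:main-B}(i)) one can solve for the preimage locally and glue, using the thick agreement shells to guarantee compatibility. Compactness of $A^{\Z^d}$ then upgrades the local solvability to a global preimage. (3) Finally, produce a \emph{finite}-memory inverse. Stable injectivity means every $p \in \Sigma(s)$ gives an injective $\sigma_p$; by a standard compactness/König's-lemma argument over the countable group $\Z^d$ (the same one underlying the equivalence (iii)$\Rightarrow$(i) in Theorem~\ref{t:intro-characterization-reversible-ANUCA}) there is a \emph{uniform} radius $r$ such that $\sigma_p(x)|_{B(r)}$ determines $x(0)$ for all $p \in \Sigma(s)$; this uniform radius is exactly what lets us define $\sigma_s^{-1}$ by a finite memory set of size governed by $r$ together with the singularity diameter. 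Combining surjectivity from (2) with left-invertibility gives invertibility.

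The main obstacle I anticipate is step (2), the patching/gluing for surjectivity: one must check that the local preimages constructed around different singular clusters agree on their overlaps, which requires the agreement shells between clusters to be thick enough that a preimage under $\sigma_c^{-1}$ on an annulus is \emph{unique} (so two local solutions that both restrict to honest $\sigma_c$-behavior on the shell must coincide there). This is where the precise quantitative form of ``bounded singularity'' — a bound on cluster diameter \emph{and} a lower bound on inter-cluster spacing, or equivalently a bound making the singular set a ``syndetically sparse'' set — is essential; without spacing control the clusters could merge and the perturbation ceases to be finite on any window. I would handle this by choosing the window size larger than (diameter of $N$) $+$ (diameter of $M$) $+$ (singularity diameter) and running the argument cluster by cluster, which is legitimate precisely because bounded singularity guarantees only finitely many singular sites in each such window. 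The remaining steps are comparatively routine: step (1) is surjunctivity over $\Z^d$ plus the standard fact that an injective finite-memory endomorphism of $A^{\Z^d}$ with $A$ finite is automatically reversible, and step (3) is the uniform-radius compactness argument, which is essentially Theorem~\ref{t:intro-characterization-reversible-ANUCA} applied with $G = \Z^d$.
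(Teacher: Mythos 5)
Your proposal is built on a misreading of the hypothesis, and the resulting strategy does not apply to the theorem as stated. \emph{Bounded singularity} (Definition~\ref{d:bounded-singularity}) does \textbf{not} say that $s$ agrees with a constant configuration $c$ outside a sparse union of bounded clusters. It says: for every finite $E \subset \Z^d$ there is a box $K \supset E$ such that $s(g)=s(k_g)$ for every $g$ in the outer boundary $\partial_E^+K = KE\setminus K$, where $k_g\in K$ is the representative of $g$ modulo $K$. This is a periodic-compatibility condition on the values of $s$ near the boundary of suitably chosen boxes; between those boxes $s$ may be completely arbitrary, there need be no ambient constant rule, and no constant configuration need lie in $\Sigma(s)$ (see the annulus example $R_n$ following Theorem~\ref{t:intro-singularity}, where $s$ may take different values on different annuli). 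Consequently your step (1) --- extracting a constant $c\in\Sigma(s)$, invoking surjunctivity of the CA $\sigma_c$ over $\Z^d$, and treating $\sigma_s$ as a locally finite perturbation of $\sigma_c$ --- has no starting point, and your step (2), which patches local preimages around ``sparse singular clusters'' using $\sigma_c^{-1}$ on thick agreement shells, is reasoning about a different class of ANUCA (essentially the asymptotically constant case of Theorem~\ref{t:main-B}, already covered elsewhere in the paper). Even granting your reading, the gluing step is not actually carried out: injectivity of $\sigma_c$ on $A^{\Z^d}$ does not by itself give uniqueness of partial preimages on finite annuli given boundary data, which is exactly the compatibility you would need.

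The paper's proof uses the hypothesis in a quite different and more economical way. Stable injectivity gives reversibility, i.e.\ some $t$ with $\sigma_t\circ\sigma_s=\Id$ (Theorem~\ref{t:reversible-finite-alpha}); if $\sigma_s$ were not surjective, the closed-image theorem would give a finite window $E$ with $\Gamma_E\subsetneq A^E$. Bounded singularity then supplies a box $K\supset E$ on which one can form the \emph{periodized} maps $\Psi_{K,s},\Psi_{K,t}\colon A^K\to A^K$ of Section~\ref{s:induced-local-map-2}; the boundary condition $s(g)=s(k_g)$ on $KM\setminus K$ is precisely what makes Lemma~\ref{l:induced-local-map-reversible-1} applicable, yielding $\Psi_{K,t}\circ\Psi_{K,s}=\Id$ and hence bijectivity of $\Psi_{K,s}$ on the finite set $A^K$. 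Restricting to $E$ gives $\Gamma_E\supset A^E$, a contradiction. Your step (3) (the uniform-radius compactness argument producing a finite-memory left inverse) is correct and coincides with the paper's use of Theorem~\ref{t:reversible-finite-alpha}, but the surjectivity argument must be replaced by this periodization device, not by patching against an ambient constant CA.
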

\par 
For instance, fix $d, n_0 \geq 1$ and two functions $f,g \colon \N \to \N$ such that 
\[
 \lim_{n \to \infty} f(n)= \lim_{n \to \infty} g(n) =\lim_{n \to \infty} f(n) - g(n)= \infty. 
 \]
\par 
Let $R_n = \{ a \in \Z\colon g(n) \leq \vert a \vert  \leq f(n)\}^d \subset \Z^d$ for $n \geq n_0$. Let $A$ be a finite set and let $M\subset \Z^d$ be finite. Then for every $s \in S^{\Z^d}$, where  $S= A^{A^{M}}$, such that $s$ is constant on $R_n$ for every $n \geq n_0$, the ANUCA $\sigma_s$ has bounded singularity. By Theorem~\ref{t:intro-singularity}, such an ANUCA is automatically invertible whenever it is stably injective. 
\par 
The paper is organized as follows. 
We introduce various induced local maps of ANUCA in Section~\ref{s:induced-local-map}. Then we investigate the continuity and closedness properties of ANUCA in  Section~\ref{s:continuity-closedness}. In Section~\ref{s:stably-injective}, we formulate  an almost equivariance property   and several invariant properties of ANUCA under the translations of the configurations of local defining maps (Lemma~\ref{l:pseudo-equivarian}). In Section~\ref{s:monoid}, we show that ANUCA over the same universe and alphabet form a monoid under the composition of maps (Theorem~\ref{t:monoid-asynchronous}). 
\par 
We shall establish various stable properties of stably injective ANUCA through a series of results:  Theorem~\ref{t:reversible-finite-alpha-beta},  Theorem~\ref{c:application-disturbed-ca-amenable}, and  Theorem~\ref{t:direct-post-injective}. 
The implication (iii)$\implies$(ii) of Theorem~\ref{t:intro-characterization-reversible-ANUCA} is proved in Theorem~\ref{t:reversible-finite-alpha-beta}.  
We shall prove Theorem~\ref{t:main-B} as a direction consequence of Theorem~\ref{c:application-disturbed-ca} and  Theorem~\ref{c:application-disturbed-ca-amenable} given in 
 Section~\ref{s:app-disturbed-sofic} and   Section~\ref{s:app-disturbed-amenable}. The proof of Theorem~\ref{t:intro-singularity} is given in Section~\ref{s:bounded-singularity-section}. In Section~\ref{s:direct-finintess}, we show that every reversible ANUCA is stably injective (Theorem~\ref{t:direct-post-injective}) which completes the proof of Theorem~\ref{t:intro-characterization-reversible-ANUCA}. 
 \par 
In Section~\ref{s:pointwise-uniform-post}, we show that over a countable universe, 
every pre-injective post-surjective ANUCA with finite memory is invertible (Theorem~\ref{t:invertible-general}). Moreover, we prove in Lemma~\ref{l:uniform-psot-surjectivity} the pointwise uniform post-surjectivity property of post-surjective ANUCA.   
We explore in  Section~\ref{s:stably-post0surjective} the notion of stably post-surjectivity and establish a  uniform post-surjectivity property of the family $\{\sigma_p \colon p \in \Sigma(s)\}$ associated with a stably post-surjective $\sigma_s$. 
\par 
Finally, we present several counter-examples  in Section~\ref{s:counter} to show that our results are in a sense  optimal.  Examples~\ref{ex:1} describes an ANUCA which is injective but not stably injective, not surjective, and not reversible.   Example~\ref{ex:2} gives a bijective non reversible (and thus non stably injective) ANUCA. In particular, for ANUCA, there is no implication between bijectivity and reversibility.

\section{Amenable and residually finite groups}
\label{s:sofic-groups}

\subsection{Amenable groups} 
Although we shall not need the precise definition of amenable groups, we include a simple  characterization for the sake of completeness. A group $G$ is amenable if it satisfies the Følner's condition \cite{folner}: 
for every $\varepsilon >0$ and $T \subset G$ finite, there exists $F \subset G$ finite such that $\vert TF \vert \leq (1+\varepsilon)\vert F \vert$. Finitely generated groups of subexponential growth and  solvable groups are amenable,  while all groups containing a subgroup isomorphic to a free group of rank 2 are not amenable. See e.g. \cite{stan-amenable} for some more details.
\par 
The celebrated Garden of Eden of Moore and Myhill \cite{moore}, \cite{myhill} states that a CA with finite alphabet over the universe $\Z$ is pre-injective if and only if it is surjective. In \cite{ceccherini},  the Garden of Eden theorem was generalized to hold over amenable group universes (see also \cite{cscp-alg-goe}, \cite{phung-2020},  \cite{phung-post-surjective}). 

\subsection{Residually finite groups} 

We recall that a group  is \emph{residually finite} if the intersection of its  finite-index subgroups
is reduced to the identity element. 
Equivalently, a group $G$ is residually finite if for every finite subset $E \subset G$, there exists a finite group and a surjective group homomorphism $\varphi \colon G \to H$ such that  $\varphi\vert_E \colon E \to H$ is injective.
\par 
Notable examples of residually finite groups include  finitely generated abelian group, e.g., $\Z^d$ and cyclic groups, and 
more generally finitely generated linear groups due to a theorem of Mal’cev. 
\par 
We formulate below a technical property of the  class  of residually finite groups which will be useful in Section~\ref{s:app-disturbed-sofic}. 

\begin{lemma}
\label{l:separation-forte-res}
Let $G$ be a residually finite group.  Then for all finite subsets $M,E \subset G$, there exist a finite subset $K \subset G$, a finite group $H$, and a surjective group homomorphism $\varphi \colon G \to H$ such that: 
\begin{enumerate}[\rm (a)]
\item $E \cup M \subset K$; 
\item $\varphi\vert_K \colon K \to H$ is bijective; 
\item 
$\varphi(KM \setminus K) \cap \varphi(E) = \varnothing$. 
\end{enumerate}
\end{lemma}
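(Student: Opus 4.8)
The plan is to invoke residual finiteness only once, but for a judiciously enlarged finite set, and then to produce $K$ as a complete set of coset representatives. The key observation is that, once (a) and (b) are in place, condition~(c) is equivalent to the single requirement $EM^{-1}\subseteq K$; so the whole point is to make sure the transversal contains $EM^{-1}$.

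First I would set $D:=E\cup M\cup EM^{-1}$, a finite subset of $G$, and apply the characterization of residual finiteness recalled just before the lemma to obtain a finite group $H$ and a surjective homomorphism $\varphi\colon G\to H$ whose restriction $\varphi\vert_D$ is injective; put $N=\Ker\varphi$, a normal subgroup of finite index. Since $\varphi\vert_D$ is injective, the elements of $D$ lie in pairwise distinct cosets of $N$, so I can enlarge $D$ to a full transversal $K$ of $N$ in $G$ by adjoining one element from each of the remaining (finitely many) cosets. Then $K$ is finite, $E\cup M\subseteq D\subseteq K$, which is (a), and $\varphi\vert_K\colon K\to H$ is a bijection because $K$ meets every coset of $N=\Ker\varphi$ exactly once, which is (b); moreover $EM^{-1}\subseteq K$ by construction.

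It then remains to check (c). Let $g\in KM\setminus K$ and write $g=km$ with $k\in K$ and $m\in M$. If $\varphi(g)\in\varphi(E)$, say $\varphi(km)=\varphi(e)$ with $e\in E$, then $\varphi(k)=\varphi(e)\varphi(m)^{-1}=\varphi(em^{-1})$; since $k$ and $em^{-1}$ both belong to $K$ and $\varphi\vert_K$ is injective, this forces $k=em^{-1}$, whence $g=km=e\in E\subseteq K$, contradicting $g\notin K$. Hence $\varphi(KM\setminus K)\cap\varphi(E)=\varnothing$, which is (c).

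There is no real analytic difficulty here; the only point that needs foresight is the inclusion of $EM^{-1}$ in the set $D$ on which $\varphi$ is required to be injective — and hence in the transversal $K$. Without it, condition~(c) can genuinely fail, since an element $km\in KM\setminus K$ may land in the $N$-coset of some $e\in E$. Everything else is routine bookkeeping with coset representatives, and the finiteness of $K$ is automatic from $|K|=[G:N]=|H|<\infty$.
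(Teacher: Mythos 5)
Your proof is correct and follows essentially the same strategy as the paper's: both apply residual finiteness to a finite set containing $EM^{-1}$, extend it to a transversal $K$ of $\Ker\varphi$, and deduce (c) from the injectivity of $\varphi\vert_K$ on that transversal. The only difference is cosmetic --- the paper first symmetrizes so that $E=M=E^{-1}\ni 1_G$ and requires injectivity on $E^2$ (which contains $EM^{-1}$), whereas you isolate the precise set $E\cup M\cup EM^{-1}$ that the argument needs.
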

\par 
\begin{proof}
Up to replacing $E$ and $M$ by the finite set $\{1_G\} \cup E^{-1}\cup E \cup M^{-1}\cup M$, we can suppose without loss of generality that $E=M$, $E=E^{-1}$, and moreover $1_G \in E$. Since $G$ is residually finite, we can find a finite group $H$ and a surjective homomorphism $\varphi \colon G \to H$ such that the restriction $\varphi\vert_{E^2} \colon E^2 \to H$ is injective. 
\par
Consider the finite index subgroup $Z= \Ker \varphi \subset G$ and let $K$ be a complete set of representatives of the right cosets of $Z$ in $G$ such that $E^2 \subset K$. Hence, the condition (b) is satisfied. Note that the choice of $K$ also satisfies (a) since $E \subset E^2 \subset K$ as $1_G \in E$. 
\par 
Let us check the condition (c). Suppose that $k \in K$,  $z \in Z$, and $x,y \in E$ verify $kx = zy$. Then $k=zyx^{-1}$ and it follows that $k$ and $yx^{-1}$ belong to the same right coset of $Z$. On the other hand, both $k$ and $yx^{-1}$ are  elements of $K$ since $yx^{-1} \in E^2 \subset K$. We deduce from the choice of $K$ that $k= yx^{-1}$. Consequently, $z=1_G$ and thus $kx =y \in E \subset K$. 
\par 
Therefore, we have proved that $KE \cap Z E \subset K$, which is exactly equivalent to the condition (c). The proof is thus complete. 
\end{proof}

\section{Induced local maps of ANUCA} 
\label{s:induced-local-map}
\subsection{Local maps for arbitrary group universes}
\label{s:induced-local-map-1}
Let $G$ be a group and let $A$ be a set. For every subset $E\subset G$ and $x \in A^E$ we define  $gx \in A^{gE}$ by setting $gx(gh)=x(h)$ for all $h \in E$. In particular, we find that   $gA^E=A^{gE}$. 
\par 
Let $M$  be a subset of a group $G$. Let $A$ be a set and let $S=A^{A^M}$ be the collection of all maps $A^M \to A$. 
\par 
For every finite subset $E \subset G$  and $w \in S^{E_n}$,  
we define a map  $f_{E,w}^+ \colon A^{E M} \to A^{E}$ defined as follows. For every $x \in A^{EM}$ and $g \in E$, we set: 
\begin{align}
\label{e:induced-local-maps} 
    f_{E,w}^+(x)(g) & = w(g)((g^{-1}x)\vert_M). 
\end{align}
\par 
In the above formula, note that  $g^{-1}x \in A^{g^{-1}EM}$ and $M \subset g^{-1}EM$ since $1_G \in g^{-1}E$ for $g \in E$. Therefore, the map   $f_{E,w}^+ \colon A^{E M} \to A^{E}$ is well defined. 
\par
Consequently, for every $s \in S^G$, we have a well-defined induced local map $f_{E, s\vert_E}^+ \colon A^{E M} \to A^{E}$ for every finite subset $E \subset G$ which satisfies: 
\begin{equation}
\label{e:induced-local-maps-general} 
    \sigma_s(x)(g) =  f_{E, s\vert_E}^+(x\vert_{EM})(g)
\end{equation}
for every $x \in A^G$ and $g \in E$. Equivalently, we have for all $x \in A^G$ that: 
\begin{equation}
\label{e:induced-local-maps-proof} 
    \sigma_s(x)\vert_E =  f_{E, s\vert_E}^+(x\vert_{EM}). 
\end{equation}

\subsection{Local maps and reversible ANUCA} 
\label{s:induced-local-map-2}
For the notation, let $M, K$ be finite subsets of a  group $G$. Let $A$ be a finite set and let $s,t \in S^G$ where $S=A^{A^M}$. Let  $\varphi \colon G \to H$ be a surjective group homomorphism where $H$ is a finite group such that $\varphi \vert_K \colon K \to H$ is a bijection. In other words, $K$ forms a complete set of representatives of the right cosets of the subgroup $\Ker \varphi$ in $G$.  
\par 
The configurations $s,t$ induce
the maps $\Psi_{K,s}$ and $\Psi_{K,t} \colon A^K \to A^K$ defined as follows. Given $x \in A^K$, we determine $\tilde{x} \in A^G$ by setting  $\tilde{x}(g)=x(k_g)$ for all $g \in G$ and the unique $k_g \in K$ such that $\varphi(g) = \varphi (k_g)$. Equivalently, $\tilde{x}(hk) = x(k)$ for all $k \in K$ and $h \in \Ker \varphi$. 
Then we put 
\begin{align}
    \label{e:local-periodic-proof-1}
\Psi_{K,s}(x) \coloneqq \sigma_s(\tilde{x})\vert_K, \quad  
\Psi_{K,t}(x) \coloneqq \sigma_t(\tilde{x})\vert_K.
\end{align}

\begin{lemma}
\label{l:induced-local-map-reversible-1}
With the above notation and hypotheses, suppose in addition that $\sigma_t \circ \sigma_s = \Id$ and $s(g)=s(k_g)$ for all $g \in KM\setminus K$. Then one has:  
\[
\Psi_{K,t} \circ \Psi_{K,s}=\Id.
\] 
\end{lemma}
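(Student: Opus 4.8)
The plan is to unwind both composites $\Psi_{K,t}\circ\Psi_{K,s}$ and $\sigma_t\circ\sigma_s$ through the induced-local-map formula \eqref{e:induced-local-maps-proof}, exploiting the $\Ker\varphi$-periodicity that is hard-wired into the tilde-construction. Fix $x \in A^K$ and put $y \coloneqq \Psi_{K,s}(x) = \sigma_s(\tilde{x})\vert_K$; the goal is $\Psi_{K,t}(y) = x$, i.e.\ $\sigma_t(\tilde{y})\vert_K = x$. Since $\sigma_t\circ\sigma_s = \Id$ by hypothesis, we have $\sigma_t(\sigma_s(\tilde{x}))\vert_K = \tilde{x}\vert_K = x$, so it is enough to show $\sigma_t(\tilde{y})\vert_K = \sigma_t(\sigma_s(\tilde{x}))\vert_K$. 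Applying \eqref{e:induced-local-maps-proof} to $\sigma_t$ with $E = K$ gives $\sigma_t(z)\vert_K = f_{K,\,t\vert_K}^+(z\vert_{KM})$ for every $z \in A^G$, so the whole lemma reduces to the single claim
\[
\tilde{y}\vert_{KM} = \sigma_s(\tilde{x})\vert_{KM}.
\]

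To prove this claim I would split $KM$ into $K$ and $KM \setminus K$. On $K$ it is immediate from the definitions: $\tilde{y}\vert_K = y = \sigma_s(\tilde{x})\vert_K$. For $g \in KM\setminus K$ I would write $k_g\in K$ for the representative with $\varphi(g)=\varphi(k_g)$, so that $\tilde{y}(g) = y(k_g) = \sigma_s(\tilde{x})(k_g)$, and I then need $\sigma_s(\tilde{x})(g) = \sigma_s(\tilde{x})(k_g)$. Expanding with the defining formula for $\sigma_s$, this becomes $s(g)\bigl((g^{-1}\tilde{x})\vert_M\bigr) = s(k_g)\bigl((k_g^{-1}\tilde{x})\vert_M\bigr)$: the equality of the local rules is exactly the hypothesis $s(g) = s(k_g)$ on $KM\setminus K$, and the equality of the arguments, $\tilde{x}(gm) = \tilde{x}(k_g m)$ for all $m\in M$, follows because $\varphi(gm) = \varphi(g)\varphi(m) = \varphi(k_g)\varphi(m) = \varphi(k_g m)$ places $gm$ and $k_gm$ in the same right coset of $\Ker\varphi$, on which $\tilde{x}$ is constant by construction. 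That finishes the claim, hence the lemma.

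The only real content is the observation isolated in the displayed claim: the $\Ker\varphi$-invariance of $\tilde{x}$ together with the hypothesis $s(g) = s(k_g)$ on $KM\setminus K$ forces $\sigma_s(\tilde{x})$ to agree on all of $KM$ with $\tilde{y}$, the periodic extension of its own restriction to $K$ — after which \eqref{e:induced-local-maps-proof} does the rest. I do not expect a genuine obstacle here; the one thing to be careful about is that $s(g)=s(k_g)$ is needed precisely on $KM\setminus K$ (on $K$ it is vacuous, since $k_g = g$ there), and that $KM$ is exactly the set of positions of $\tilde{x}$ inspected when $\sigma_t$ is evaluated on $K$ through its memory $M$.
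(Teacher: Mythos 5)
Your proof is correct and follows essentially the same route as the paper's: both arguments reduce the lemma to showing that $\sigma_s(\tilde{x})$ agrees on $KM$ with the $\Ker\varphi$-periodic extension of its restriction to $K$ (your $\tilde{y}\vert_{KM}=\sigma_s(\tilde{x})\vert_{KM}$ is exactly the paper's claim $u=\tilde{v}\vert_{KM}$), and both verify this on $KM\setminus K$ via the hypothesis $s(g)=s(k_g)$ together with $(g^{-1}\tilde{x})\vert_M=(k_g^{-1}\tilde{x})\vert_M$. The only cosmetic difference is that the paper phrases the computation through the explicit induced local maps $f^+_{KM,s\vert_{KM}}$ rather than directly through $\sigma_s(\tilde{x})$.
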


\begin{proof}
Let $x \in A^K$. Consider   $u=f^+_{KM, s\vert_{KM}}  (\tilde{x}\vert_{KM^2}) \in A^{KM}$ and  
$v=u\vert_K$. Then we deduce from \eqref{e:local-periodic-proof-1} and \eqref{e:induced-local-maps-proof} that
\begin{equation}
    \label{e:induced-local-maps-proof-1-2-3}
v= u\vert_K = \sigma_s(\tilde{x})\vert_K = \Psi_{K,s}(x). 
\end{equation}
\par 
We claim that  $u=\tilde{v}\vert_{KM}$. Indeed, let us  fix   $g \in KM$. 
If $g \in K$ then there is nothing to prove since $\tilde{v}\vert_K=v=u\vert_K$.  
Suppose now that $g \in KM \setminus K$. 
\par 
On the one hand, for the unique $k_g \in K$ and $h_g \in \Ker \varphi$ such that $g= h_g k_g$, we have: 
\begin{align} 
\label{e:lacol-map}
\tilde{v}(g) = v(k_g) = u(k_g) = s(k_g)((k_g^{-1}\tilde{x})\vert_{M}). 
\end{align}
\par 
On the other hand, observe that $(g^{-1}\tilde{x})\vert_{M}=(k_g^{-1}\tilde{x})\vert_{M} $ since for every $m \in M$, we have by the definition of $\tilde{x}$ that: 
\[
\tilde{x}(gm)=\tilde{x}(h_gk_gm)=\tilde{x}(k_gm).
\]
\par 
Consequently, as $s(g)=s(k_g)$ by hypothesis, we deduce that: 
\begin{align} 
\label{e:lacol-map-2}
u(g) = s(g)((g^{-1}\tilde{x})\vert_{M})
= s(h_g k_g) ((k_g^{-1}  \tilde{x})\vert_{M})=s(k_g)
((k_g^{-1} \tilde{x})\vert_{M})
\end{align}
\par 
Hence, \eqref{e:lacol-map} and  \eqref{e:lacol-map-2} imply that $u=\tilde{v}\vert_{KM}$ and the claim is proved. 
\par 
We can thus compute  that: 
\begin{align*}
    \Psi_{K,t} ( \Psi_{K,s}(x)) & = 
    \Psi_{K,t} (v)  & (\text{by }\eqref{e:local-periodic-proof-1})
    \\ 
    & = \sigma_t (\tilde{v})\vert_{K} & (\text{by }\eqref{e:induced-local-maps-proof-1-2-3}) 
    \\ & = f^+_{K, t\vert_K}
    \left(\tilde{v}\vert_{KM} \right) 
    & (\text{by }\eqref{e:induced-local-maps-proof})
    \\
    & =  f^+_{K, t\vert_K}
    (u )
    \\& = f^+_{K, t\vert_K}
    \left(f^+_{KM, s\vert_{KM}} (\tilde{x}\vert_{KM^2})\right) \\
    & = \sigma_t (\sigma_s(\tilde{x}))\vert_K \\
    & = \tilde{x}\vert_K  & (\text{as } \sigma_c \circ \sigma_s = \Id)\\
    & = x. 
\end{align*}
\par 
Therefore, the proof of the lemma is complete. 
\end{proof}

\section{Continuity and closedness property of ANUCA}
\label{s:continuity-closedness}

We first prove that every ANUCA with finite memory is continuous. 
\begin{lemma}
\label{l:asyn-ca-continuous} Let $M$  be a finite subset of a group $G$. Let $A$ be a set and let $S=A^{A^M}$. Then for every $s \in S^G$, the ANUCA $\sigma_s \colon A^G \to A^G$ is continuous with  respect to the prodiscrete topology. 
\end{lemma}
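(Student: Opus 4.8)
The plan is to show that $\sigma_s$ is continuous at every point $x \in A^G$ by exploiting the fact that the memory $M$ is finite, so that each coordinate of $\sigma_s(x)$ depends on only finitely many coordinates of $x$. Recall that a basic neighborhood of a configuration $y \in A^G$ in the prodiscrete topology is determined by prescribing the values of $y$ on a finite subset of $G$; thus it suffices to prove that for every finite subset $E \subset G$, the map $x \mapsto \sigma_s(x)\vert_E$ is continuous, i.e. locally constant in $x$.

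The key observation is formula \eqref{e:induced-local-maps-proof} from Section~\ref{s:induced-local-map-1}, which gives
\[
\sigma_s(x)\vert_E = f^+_{E, s\vert_E}(x\vert_{EM})
\]
for every $x \in A^G$. Since $M$ and $E$ are finite, the product $EM \subset G$ is a finite subset of $G$. Hence the value $\sigma_s(x)\vert_E$ depends only on the restriction $x\vert_{EM}$. Concretely, given $x \in A^G$, I would take the elementary cylinder $U = \{ y \in A^G : y\vert_{EM} = x\vert_{EM}\}$, which is an open neighborhood of $x$ in the prodiscrete topology; then for every $y \in U$ we have $y\vert_{EM} = x\vert_{EM}$, so $\sigma_s(y)\vert_E = f^+_{E,s\vert_E}(y\vert_{EM}) = f^+_{E,s\vert_E}(x\vert_{EM}) = \sigma_s(x)\vert_E$. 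This shows $\sigma_s(U)$ is contained in the basic neighborhood of $\sigma_s(x)$ determined by $E$, establishing continuity.

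To organize this cleanly, I would first note that the collection of sets $\{ y \in A^G : y\vert_E = z \}$ over all finite $E \subset G$ and $z \in A^E$ forms a subbasis (in fact a basis) for the prodiscrete topology on $A^G$, so continuity of $\sigma_s$ is equivalent to continuity of $\pi_E \circ \sigma_s$ for each finite $E$, where $\pi_E \colon A^G \to A^E$ is the (continuous) projection. Then the argument above, invoking \eqref{e:induced-local-maps-proof} and the finiteness of $EM$, finishes the proof. There is no real obstacle here — the only point requiring a small amount of care is confirming that $EM$ is finite (immediate, since both $E$ and $M$ are finite) and that the projection-based reformulation of prodiscrete continuity is legitimate; everything else is a direct unwinding of definitions.
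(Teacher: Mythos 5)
Your proposal is correct and is essentially the paper's own argument: the paper's one-line proof likewise observes that $x\vert_{EM}=y\vert_{EM}$ forces $\sigma_s(x)\vert_E=\sigma_s(y)\vert_E$ and that $EM$ is finite. You have simply spelled out the cylinder-set and projection formalities that the paper leaves implicit.
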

\begin{proof}
It suffices to observe that for every finite subset $E \subset G$ and all $x, y \in A^G$ such that $x\vert_{EM}= y\vert_{EM}$, we have  $\sigma_s(x)\vert_E= \sigma_s(y) \vert_E$ and note that $EM\subset G$ is finite.  
\end{proof}
\par 
The following result  shows the continuity of ANUCA $\sigma_s$ with respect to the configuration $s \in S^G$. Note that we do not suppose the finiteness of neither the memory nor the alphabet.  
\begin{lemma}
\label{l:continuity} 
Let $M$  be a  subset of a group $G$. Let $A$ be a set and let $S=A^{A^M}$. Suppose that a sequence $(s_n)_{n \in \N}$ of elements of $S^G$ converges to some $s \in S^G$. Then for every $x \in A^G$, one has $\lim_{n \to \infty} \sigma_{s_n}(x) = \sigma_s(x)$.  
\end{lemma}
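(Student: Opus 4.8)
The plan is to reduce the statement to a pointwise check on each coordinate $g \in G$, exploiting that convergence in the prodiscrete topology on $A^G$ means coordinatewise eventual agreement. First I would recall that a sequence $(y_n)_{n \in \N}$ in $A^G$ converges to $y \in A^G$ if and only if for every $g \in G$ there exists $N$ such that $y_n(g) = y(g)$ for all $n \geq N$; the same description applies to $S^G$. So the hypothesis $s_n \to s$ in $S^G$ gives, for every fixed $g \in G$, an index $N_g$ with $s_n(g) = s(g)$ for all $n \geq N_g$, where $s_n(g), s(g) \in S = A^{A^M}$ are genuinely equal as maps $A^M \to A$.

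Next, fix $x \in A^G$ and fix $g \in G$. The defining formula gives
\[
\sigma_{s_n}(x)(g) = s_n(g)\bigl((g^{-1}x)\vert_M\bigr), \qquad \sigma_s(x)(g) = s(g)\bigl((g^{-1}x)\vert_M\bigr).
\]
Note that the element $(g^{-1}x)\vert_M \in A^M$ to which the local map is applied does not depend on $n$ at all — only the local map $s_n(g)$ versus $s(g)$ changes with $n$. Hence for all $n \geq N_g$, since $s_n(g) = s(g)$ as maps, we get $\sigma_{s_n}(x)(g) = \sigma_s(x)(g)$. As $g$ was arbitrary, this shows $\sigma_{s_n}(x) \to \sigma_s(x)$ coordinatewise, which is exactly convergence in the prodiscrete topology, and it already incorporates the fact that no finiteness of $M$ or $A$ is used.

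There is essentially no main obstacle here: the only mild subtlety is making the translation between "convergence in $S^G$" and "eventual equality of each coordinate $s_n(g) = s(g)$ in $S$" precise, and observing that equality in $S = A^{A^M}$ is equality of maps, so it transfers verbatim through the evaluation at the single argument $(g^{-1}x)\vert_M$. Everything else is a direct substitution into the ANUCA formula, and one can present the argument in a few lines once the prodiscrete-convergence characterization is invoked.
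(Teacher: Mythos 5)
Your proof is correct and follows essentially the same route as the paper's: the paper fixes a finite subset $E\subset G$ and uses that $s_n\vert_E = s\vert_E$ eventually to conclude $\sigma_{s_n}(x)\vert_E = \sigma_s(x)\vert_E$ via the induced local maps, which is just the finite-window version of your coordinatewise argument at each $g$. The two formulations are interchangeable since prodiscrete convergence is exactly eventual agreement on each coordinate (equivalently, on each finite set), and your observation that the argument $(g^{-1}x)\vert_M$ is independent of $n$ is precisely the point the paper's computation relies on.
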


\begin{proof}
Let  $(s_n)_{n \in \N}$ be a sequence in  $S^G$ which converges to some $s \in S^G$. Let $x \in A^G$ and let $E\subset G$ be a finite subset. Then there exists $n_0 \in \N$ large enough so that for every $n \geq n_0$, we have $s_n\vert_{E}= s\vert_{E}$. From \eqref{e:induced-local-maps} and \eqref{e:induced-local-maps-proof}, we can compute: 
\begin{align*}
    \sigma_{s_n}(x)\vert_E  & = f_{E, (s_n)\vert_E}^+(x\vert_{EM}) \\ 
    & =  f_{E, s\vert_E}^+(x\vert_{EM}) \\ 
    & =  \sigma_{s}(x)\vert_E.
\end{align*}
\par 
We conclude that $\lim_{n \to \infty} \sigma_{s_n}(x) = \sigma_s(x)$ and the proof is complete. 
\end{proof}
\par 
Similarly, we can prove the following continuity of families of ANUCA with the same finite memory. 

\begin{lemma}
\label{l:continuity-x} 
Let $M$  be a finite   subset of a group $G$. Let $A$ be a set and let $S=A^{A^M}$. Suppose that
$\lim_{n \to \infty} x_n = x$ and $\lim_{n \to \infty} s_n = x$ for some $x \in A^G$, $s \in S^G$ and sequences $(x_n)_{n \in \N}$, $(s_n)_{n \in \N}$ in $A^G$ and $S^G$ respectively. 
Then one has $\lim_{n \to \infty} \sigma_{s_n}(x_n) = \sigma_s(x)$.   
\end{lemma}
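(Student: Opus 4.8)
The plan is to combine the arguments already used for Lemma~\ref{l:continuity} and Lemma~\ref{l:asyn-ca-continuous}, exploiting the finiteness of $M$. To show $\lim_{n\to\infty}\sigma_{s_n}(x_n)=\sigma_s(x)$ in the prodiscrete topology, it suffices to fix an arbitrary finite subset $E\subset G$ and produce an index $n_0$ such that $\sigma_{s_n}(x_n)\vert_E=\sigma_s(x)\vert_E$ for all $n\geq n_0$. Since $M$ is finite, the set $EM\subset G$ is finite, which is the only place finiteness of the memory enters.

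First I would use the hypothesis $\lim_{n\to\infty}x_n=x$: because $EM$ is finite, there is $n_1\in\N$ with $x_n\vert_{EM}=x\vert_{EM}$ for all $n\geq n_1$. Next, from $\lim_{n\to\infty}s_n=s$ and finiteness of $E$, there is $n_2\in\N$ with $s_n\vert_E=s\vert_E$ for all $n\geq n_2$. Set $n_0=\max(n_1,n_2)$. Then for every $n\geq n_0$, formula \eqref{e:induced-local-maps-proof} applied to $\sigma_{s_n}$ and to $\sigma_s$ gives
\[
\sigma_{s_n}(x_n)\vert_E=f^+_{E,\,s_n\vert_E}\bigl(x_n\vert_{EM}\bigr)=f^+_{E,\,s\vert_E}\bigl(x\vert_{EM}\bigr)=\sigma_s(x)\vert_E,
\]
where the middle equality uses both $s_n\vert_E=s\vert_E$ and $x_n\vert_{EM}=x\vert_{EM}$. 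Since $E$ was an arbitrary finite subset of $G$, this shows $\sigma_{s_n}(x_n)\to\sigma_s(x)$, completing the proof.

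There is essentially no serious obstacle here: the statement is the natural "joint continuity" strengthening of the two preceding lemmas, and the argument is a direct bookkeeping of two stabilization indices. The only point requiring minor care is that one must pass to $EM$ (not just $E$) when invoking convergence of $(x_n)$, and this is exactly why the hypothesis that $M$ be finite cannot be dropped.
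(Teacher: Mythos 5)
Your proof is correct and follows essentially the same route as the paper's: both fix a finite $E\subset G$, use finiteness of $M$ to stabilize $x_n\vert_{EM}$ and $s_n\vert_E$ for large $n$, and then conclude via the induced local map $f^+_{E,\,s\vert_E}$. No issues.
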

\par 
\begin{proof}
 Let $s \in S^G$ and let $E\subset G$ be a finite subset. Since $M$ is  finite by hypothesis and since $\lim_{n \to \infty} x_n = x$, $\lim_{n \to \infty} s_n = s$ we can find $n_0 \in \N$ large enough so that for every $n \geq n_0$, we have $x_n\vert_{EM}= x\vert_{EM}$ and moreover 
$s_n\vert_{E}= s\vert_{E}$. 
We can thus compute again from the formula \eqref{e:induced-local-maps} and \eqref{e:induced-local-maps-proof}: 
\begin{align*}
    \sigma_{s_n}(x_n)\vert_E  & = f_{E, (s_n)\vert_E}^+(x_n\vert_{EM}) \\ 
    & =  f_{E, s\vert_E}^+(x\vert_{EM}) \\ 
    & =  \sigma_{s}(x)\vert_E.
\end{align*}
\par 
We conclude that $\lim_{n \to \infty} \sigma_{s_n}(x) = \sigma_s(x)$ and the proof is complete. 
\end{proof}

\par 
Note that when $G$ is a countable group, the topological space $A^G$ is metrizable for every set $A$. In fact, we  can define in this case the compatible \emph{Hamming metric} $d$  on $A^G$ associated with any given  exhaustion $(E_n)_{n \in \N}$ of finite subsets of $G$ by setting for all $x, y \in A^G$: 
\begin{equation}
    \label{e:hamming-metric}
    d(x,y)= 2^{-n(x,y)}, \quad \text{ where } n(x,y)= \sup \{k \in \N \colon x\vert_{E_k} = y \vert_{E_k}\}. 
\end{equation}
\par 
As for classical CA, we can show that the important closed image property also holds for ANUCA.  

\begin{theorem}
\label{t:closed-image-asynchronous} 
Let $M$  be a finite subset of a countable group $G$. Let $A$ be a finite set and let $S=A^{A^M}$. Then for every $s \in S^G$, the image $\sigma_s(A^G)$ is closed in $A^G$ with respect to the prodiscrete topology. 
\end{theorem}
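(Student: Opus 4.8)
The plan is to exploit compactness of $A^G$ together with a diagonal/compactness argument to show that any configuration in the closure of $\sigma_s(A^G)$ actually has a preimage. Concretely, fix an exhaustion $(E_n)_{n\in\N}$ of $G$ by finite subsets with $1_G \in E_0$ and $E_n \subset E_{n+1}$, and let $y \in \overline{\sigma_s(A^G)}$. First I would unwind what membership in the closure means via the prodiscrete topology: for each $n$ there exists $x^{(n)} \in A^G$ with $\sigma_s(x^{(n)})\vert_{E_n} = y\vert_{E_n}$. Using the induced local map description \eqref{e:induced-local-maps-proof}, this says $f^+_{E_n,\,s\vert_{E_n}}(x^{(n)}\vert_{E_nM}) = y\vert_{E_n}$; in particular, restricting to the finite window $E_nM$ only the values $x^{(n)}\vert_{E_nM}$ matter.

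The core step is a König's-lemma / compactness argument producing a single $x \in A^G$ with $\sigma_s(x) = y$. Since $A$ is finite, $A^G$ is compact, so the sequence $(x^{(n)})_{n\in\N}$ has a convergent subnet, or — since $G$ is countable and $A^G$ is metrizable by the Hamming metric \eqref{e:hamming-metric} — a convergent subsequence $x^{(n_k)} \to x$. I would then invoke continuity of $\sigma_s$ (Lemma~\ref{l:asyn-ca-continuous}), which holds because $M$ is finite, to conclude $\sigma_s(x^{(n_k)}) \to \sigma_s(x)$. On the other hand, for any fixed finite $E = E_m$, once $n_k \geq m$ we have $\sigma_s(x^{(n_k)})\vert_{E_m} = y\vert_{E_m}$, so $\sigma_s(x^{(n_k)}) \to y$ as well; by uniqueness of limits in the Hausdorff space $A^G$, $\sigma_s(x) = y$. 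Hence $y \in \sigma_s(A^G)$ and the image is closed.

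Alternatively — and this is the more self-contained route that avoids quoting metrizability — I would build $x$ directly by a compactness-over-finite-patches argument: for each $n$, the set $P_n \coloneqq \{\, z\vert_{E_nM} : z \in A^G,\ f^+_{E_n, s\vert_{E_n}}(z\vert_{E_nM}) = y\vert_{E_n}\,\} \subset A^{E_nM}$ is nonempty (witnessed by $x^{(n)}$), and finite; moreover the restriction maps $A^{E_{n+1}M} \to A^{E_nM}$ carry $P_{n+1}$ into $P_n$ because $f^+_{E_{n+1}, s\vert_{E_{n+1}}}$ restricted to coordinates in $E_n$ agrees with $f^+_{E_n, s\vert_{E_n}}$ (both are governed by \eqref{e:induced-local-maps}, which is local at each $g$). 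An inverse limit of a sequence of nonempty finite sets along surjective-on-image maps is nonempty (König's lemma), so we get a compatible family $(p_n)$, $p_n \in P_n$, glueing to a configuration $x \in A^G$ with $x\vert_{E_nM} = p_n$; then $\sigma_s(x)\vert_{E_n} = f^+_{E_n, s\vert_{E_n}}(x\vert_{E_nM}) = y\vert_{E_n}$ for all $n$, i.e. $\sigma_s(x) = y$.

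The main obstacle — really the only delicate point — is the compatibility of the local maps: one must verify carefully that $f^+_{E_{n+1}, s\vert_{E_{n+1}}}(z\vert_{E_{n+1}M})\big\vert_{E_n} = f^+_{E_n, s\vert_{E_n}}(z\vert_{E_nM})$ so that the restriction maps genuinely send $P_{n+1}$ into $P_n$, and that each $P_n$ is nonempty (which is exactly the closure hypothesis applied at level $n$). Both follow directly from formula \eqref{e:induced-local-maps}, since $f^+_{E,w}(z)(g)$ depends only on $g$, on $w(g)$, and on $z\vert_{gM}$, none of which change when we enlarge $E$. The finiteness of $A$ is used only to guarantee each $P_n$ is finite (so König's lemma applies), and the finiteness of $M$ together with countability of $G$ guarantees the windows $E_nM$ are finite and exhaust $G$; everything else is a routine inverse-limit argument.
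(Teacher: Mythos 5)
Your first argument is essentially the paper's own proof: the paper simply observes that $A^G$ is compact and metrizable (Tychonoff plus countability of $G$) and that $\sigma_s$ is continuous by Lemma~\ref{l:asyn-ca-continuous}, hence a closed map; your subsequence extraction and uniqueness-of-limits step just unpacks that general topological fact. Your alternative inverse-limit/K\"onig's lemma route is also correct and more self-contained, but it is not the path the paper takes and is not needed given the compactness argument.
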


\begin{proof}
First observe that $A^G$ is a compact metrizable space for the prodiscrete topology  by Tychonoff's theorem as $A$ is finite. 
Moreover, as $G$ is countable, the topological space $A^G$ is also metrizable with the standard Hamming metric (cf. \eqref{e:hamming-metric}). On the other hand, we know that $\sigma_s$ is continuous by Lemma~\ref{l:asyn-ca-continuous}. 
Consequently, $\sigma_s$ is a closed map and it follows in particular that $\sigma_s(A^G)$ is a closed subset of $A^G$. The proof is thus complete. 
\end{proof}
\par 
As an application, we obtain the following result:
\begin{corollary}
\label{l:post-surj-continuous-surj} Let $M$  be a finite subset of a countable group $G$. Let $A$ be a finite set and let $S=A^{A^M}$. Suppose that $s \in S^G$ and $\sigma_s$ is post-surjective. Then $\sigma_s$ is also surjective. 
\end{corollary}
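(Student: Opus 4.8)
The plan is to combine the closed image property (Theorem~\ref{t:closed-image-asynchronous}) with a density argument coming from post-surjectivity. Recall that $\sigma_s$ being post-surjective means: for every $x \in A^G$ and every $y \in A^G$ asymptotic to $\sigma_s(x)$, there exists $z \in A^G$ asymptotic to $x$ with $\sigma_s(z)=y$. In particular, this immediately gives that the image $\sigma_s(A^G)$ is \emph{saturated} under the asymptotic equivalence relation: if $y' \in \sigma_s(A^G)$ and $y$ is asymptotic to $y'$, then $y \in \sigma_s(A^G)$.

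First I would fix any configuration $x_0 \in A^G$ (the set $A^G$ is nonempty) and set $y_0 \coloneqq \sigma_s(x_0) \in \sigma_s(A^G)$. By the saturation observation above, every configuration asymptotic to $y_0$ lies in $\sigma_s(A^G)$; that is, $\sigma_s(A^G)$ contains the whole asymptotic class $\{\, y \in A^G : y\vert_{G\setminus E} = y_0\vert_{G\setminus E} \text{ for some finite } E \subset G \,\}$.

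The key step is then to observe that this asymptotic class is dense in $A^G$ for the prodiscrete topology. Indeed, a basic nonempty open set of $A^G$ has the form $\{\, z \in A^G : z\vert_E = p \,\}$ for some finite $E \subset G$ and some $p \in A^E$; the configuration agreeing with $p$ on $E$ and with $y_0$ on $G\setminus E$ belongs both to this open set and to the asymptotic class of $y_0$. Hence $\sigma_s(A^G)$ is dense in $A^G$.

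Finally, since $G$ is countable and $A$ is finite, Theorem~\ref{t:closed-image-asynchronous} tells us that $\sigma_s(A^G)$ is closed in $A^G$. A dense closed subset of a topological space is the whole space, so $\sigma_s(A^G) = A^G$, i.e.\ $\sigma_s$ is surjective. There is no serious obstacle here beyond assembling these ingredients; the only point requiring care is the density of the asymptotic class, which rests on the explicit description of basic open sets in the prodiscrete topology and is where the finiteness of $A$ (ensuring such cylinders are genuinely neighborhoods determined by finitely many coordinates) and the availability of Theorem~\ref{t:closed-image-asynchronous} come together.
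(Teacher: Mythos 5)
Your proposal is correct and follows exactly the paper's own argument: fix $x_0$, note that post-surjectivity forces the entire asymptotic class of $y_0=\sigma_s(x_0)$ to lie in the image, observe that this class is dense in the prodiscrete topology, and conclude via the closed image property of Theorem~\ref{t:closed-image-asynchronous}. The only difference is that you spell out the density of the asymptotic class via cylinder sets, which the paper leaves as ``clearly.''
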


\begin{proof}
By Theorem~\ref{t:closed-image-asynchronous}, it suffices to prove that the image $\sigma_s(A^G)$ is dense in $A^G$ with respect to the prodiscrete topopolgy. 
\par 
Let us fix an arbitrary configuration $x_0 \in A^G$ and let $y_0= \sigma_s(x_0) \in A^G$. Then by the definition of post-surjectivity, we deduce that every configuration that is asymptotic to $y_0$ must belong to the image of $\sigma_s$. On the other hand, the set of all configurations asymptotic to $y_0$ is clearly a dense subset of $A^G$. We deduce that $\sigma_s(A^G)$ contains a dense subset of $A^G$ and the conclusion follows. 
\end{proof}

\section{Stably injective ANUCA}
\label{s:stably-injective}
In general, ANUCA do not enjoy the $G$-equivariance property of the smaller class of CA but they satisfy the following fundamental property.  
 
\begin{lemma}
\label{l:pseudo-equivarian}
Let $M$ be a subset of a group $G$. Let $A$ be a set and let $S=A^{A^M}$. Let $s \in S^G$ then for all $g \in G$, we have 
\begin{equation}
\label{e:soft-equivariant}
\sigma_{gs}(gx) = g\sigma_s(x), \quad \text{ for every } x \in A^G. 
\end{equation} 
Moreover, if $\sigma_s$ is injective (resp. surjective, resp. pre-injective, resp. stably injective, resp. post-surjective) then so is $\sigma_{gs}$ for every $g \in G$. 
\end{lemma}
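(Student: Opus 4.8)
The plan is to first verify the identity \eqref{e:soft-equivariant} by a direct computation from the definition of ANUCA, and then to deduce each of the preservation statements by transporting the relevant property along the bijection $x \mapsto gx$ of $A^G$ (which is a homeomorphism, indeed it is the Bernoulli shift action of $g^{-1}$). For the identity: fix $g \in G$, $x \in A^G$, and evaluate both sides at an arbitrary $h \in G$. On the left, $\sigma_{gs}(gx)(h) = (gs)(h)\bigl((h^{-1}(gx))\vert_M\bigr) = s(g^{-1}h)\bigl(((g^{-1}h)^{-1}x)\vert_M\bigr)$, using $(gs)(h) = s(g^{-1}h)$ and $h^{-1}(gx) = h^{-1}g\cdot x = (g^{-1}h)^{-1}x$. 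On the right, $(g\sigma_s(x))(h) = \sigma_s(x)(g^{-1}h) = s(g^{-1}h)\bigl(((g^{-1}h)^{-1}x)\vert_M\bigr)$. The two expressions coincide, which proves \eqref{e:soft-equivariant}.

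Next I would record the consequence that $\sigma_{gs} = \gamma_g \circ \sigma_s \circ \gamma_{g}^{-1}$, where $\gamma_g \colon A^G \to A^G$ is the bijection $y \mapsto gy$ (with inverse $y \mapsto g^{-1}y$); indeed \eqref{e:soft-equivariant} says exactly $\sigma_{gs} \circ \gamma_g = \gamma_g \circ \sigma_s$. Since $\gamma_g$ is a bijection, injectivity and surjectivity of $\sigma_{gs}$ follow immediately from the corresponding properties of $\sigma_s$. For pre-injectivity, one uses additionally that $\gamma_g$ preserves the relation of being asymptotic (if $x\vert_{G\setminus E} = y\vert_{G\setminus E}$ then $(gx)\vert_{G \setminus gE} = (gy)\vert_{G\setminus gE}$, and $gE$ is finite); so if $\sigma_{gs}(x) = \sigma_{gs}(y)$ with $x \ne y$ asymptotic, then applying $\gamma_g^{-1}$ yields distinct asymptotic configurations with the same image under $\sigma_s$, contradicting pre-injectivity of $\sigma_s$. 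For post-surjectivity the same argument applies: given $x$ and $y'$ asymptotic to $\sigma_{gs}(x)$, the configuration $g^{-1}y'$ is asymptotic to $\sigma_s(g^{-1}x)$, so post-surjectivity of $\sigma_s$ produces $z$ asymptotic to $g^{-1}x$ with $\sigma_s(z) = g^{-1}y'$, and then $gz$ witnesses post-surjectivity for $\sigma_{gs}$ at $x$.

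Finally, for stable injectivity I would invoke the description $\Sigma(gs) = g\,\Sigma(s) = \Sigma(s)$: the orbit closure is shift-invariant, so $\Sigma(gs) = \Sigma(s)$ as subsets of $S^G$. Hence if $\sigma_p$ is injective for every $p \in \Sigma(s)$, then in particular $\sigma_p$ is injective for every $p \in \Sigma(gs)$, i.e.\ $\sigma_{gs}$ is stably injective. (Alternatively, and more directly, stable injectivity of $\sigma_s$ is itself a shift-invariant condition on $s$ since it is phrased purely in terms of $\Sigma(s)$.) I do not anticipate a genuine obstacle here; the only point requiring a little care is the bookkeeping with the shift conventions $(gx)(h) = x(g^{-1}h)$ and $(gs)(h) = s(g^{-1}h)$ in the verification of \eqref{e:soft-equivariant}, and the observation that translation by $g$ sends a finite ``defect set'' $E$ to the finite set $gE$, which is what makes the asymptoticity-based properties (pre-injectivity, post-surjectivity) transfer.
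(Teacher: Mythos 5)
Your proposal is correct and follows essentially the same route as the paper: a direct pointwise verification of \eqref{e:soft-equivariant} using the shift conventions, followed by transporting each property along the bijection $x \mapsto gx$ (the paper spells out each case explicitly rather than packaging the identity as the conjugation $\sigma_{gs} = \gamma_g \circ \sigma_s \circ \gamma_g^{-1}$, but the computations are identical), and the same observation that $\Sigma(gs) = \Sigma(s)$ because the $G$-orbits of $s$ and $gs$ coincide. No gaps.
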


\begin{proof}
 Indeed, let $s \in S^G$, $g \in G$, $x \in A^G$ and $h \in G$, then we find from Definition \ref{d:most-general-def-asyn-ca}  that 
\begin{align*}
\sigma_{gs}(gx)(h) & =   gs(h)((h^{-1}gx)\vert_M)  \\
& = s(g^{-1}h) ((g^{-1}h)^{-1}x)\vert_M) \\
& = \sigma_s(x)(g^{-1}h)
\\ & = g\sigma_s(x)(h).  
\end{align*}
\par 
Consequently, we obtain $\sigma_{gs}(gx)= g\sigma_s(x)$ and the formula \eqref{e:soft-equivariant} is proved. 
\par 
Now fix $s \in S^G$. Suppose first that $\sigma_{s}$ is injective. Let $g \in G$ and let $x,y \in A^G$ be such that $\sigma_{gs}(x)= \sigma_{gs}(y)$. Then it follows from \eqref{e:soft-equivariant} that $g\sigma_s(g^{-1}x)= g\sigma_s(g^{-1}y)$. We deduce that $\sigma_s(g^{-1}x)= \sigma_s(g^{-1}y)$. Therefore, as  $\sigma_s$ is injective, we obtain $g^{-1}x=g^{-1}y$ and consequently $x=y$. Hence, $\sigma_{gs}$ is also injective for every $g \in G$. 
\par 
Similarly, suppose that $\sigma_s$ is surjective. Let $g \in G$ and let $y \in A^G$. Then we can find $x \in A^G$ such that $\sigma_s(x)=g^{-1}y$. It follows from 
\eqref{e:soft-equivariant} that $ \sigma_{gs}(gx) = g \sigma_s(x)=gg^{-1}y=y$. Since $y\in A^G$ is arbitrary, we conclude that $\sigma_{gs}$ is surjective as well.
\par 
Assume now that $\sigma_s$ is pre-injective. Let $g \in G$ and let  $x, y \in A^G$ be two asymptotic configurations such that $\sigma_{gs}(x)= \sigma_{gs}(y)$. Then we infer from \eqref{e:soft-equivariant} that 
$g\sigma_s(g^{-1}x)=  g\sigma_s(g^{-1}y)$ and therefore 
$\sigma_s(g^{-1}x)=\sigma_s(g^{-1}y)$. Since $x,y$ are asymptotic, so are $g^{-1}x$ and $g^{-1}y$. Thus, the pre-injectivity of $\sigma_s$ implies that $g^{-1}x=g^{-1}y$. Consequently, $x=y$ and we deduce that $\sigma_{gs}$ is pre-injective for every $g \in G$. 
\par 
Next, we suppose that $\sigma_s$ is post-surjective. Let $x , y, t\in A^G$ such that $y=\sigma_{gs}(x)$ and $y,t$ are asymptotic. Then \eqref{e:soft-equivariant} implies that $y=g\sigma_s(g^{-1}x)$ and thus $g^{-1}y=\sigma_s(g^{-1}x)$. As $\sigma_s$ is post-surjective and since $g^{-1}y$, $g^{-1}t$ are  asymptotic, we deduce that there exists a configuration $z \in A^G$ which is asymptotic to $g^{-1}x$ and $g^{-1}t= \sigma_s(z)$. Again, we infer from  \eqref{e:soft-equivariant} that 
$t=g\sigma_s(z)=\sigma_{gs}(gz)$. Note that $gz$ is asymptotic to $gg^{-1}x=x$. Therefore, we find that $\sigma_{gs}$ is also post-surjective for all $g \in G$. 
\par 
Finally, suppose that $\sigma_s$ is stably injective. Fix $ g \in G$  and observe that: 
\begin{align*}
\Sigma(gs) 
& =  
\overline{\{hgs \colon h \in G\}}\\
& = \overline{\{hs \colon h \in G\}} \quad \quad \quad (\text{since }G \text{ is a group}) \\ 
& = \Sigma(s)  \subset S^G. 
\end{align*}
\par 
Since $\sigma_s$ is stably injective, we deduce from the definition that $\sigma_{p}$ is injective for every $p \in \Sigma(s)= \Sigma(gs)$. We can thus  conclude that $\sigma_{gs}$ is stably injective for all $g \in G$. The proof of the lemma is complete. 
\end{proof} 
\par 
Lemma~\ref{l:pseudo-equivarian} implies in particular that  an injective ANUCA $\sigma_s$ is stably injective if and only if $\sigma_p$ is also injective for every \emph{limit point}  $p\in \Sigma(s)$, which justifies our choice of the terminology stable injectivity. We shall see later in Theorem~\ref{t:reversible-finite-alpha-beta},   Theorem~\ref{c:application-disturbed-ca-amenable}, Theorem~\ref{t:direct-post-injective} more stable properties of stably injective ANUCA. 

\par 
The next lemma allows us to  improve the statement concerning the stable injectivity of  Lemma~\ref{l:pseudo-equivarian}. 
\begin{lemma}
\label{l:post-injective-stable}
Let $M$  be a  subset of a group $G$. Let $A$ be a set and let $S=A^{A^M}$. Let $s \in S^G$. Then for every $p \in \Sigma(s)$ we have $\Sigma(p) \subset \Sigma(s)$. In particular,  $\sigma_s$ is  stably injective if and only if so is $\sigma_{p}$ for every $p \in \Sigma(s)$. 
\end{lemma}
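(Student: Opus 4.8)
The plan is to observe that $\Sigma(s)$ is nothing but the orbit closure of $s$ under the Bernoulli shift action of $G$ on $S^G$, hence a closed and $G$-invariant subset of $S^G$, and that the inclusion $\Sigma(p)\subset\Sigma(s)$ for $p\in\Sigma(s)$ is then purely formal.

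First I would record that each translation map $S^G\to S^G$, $x\mapsto gx$, is a homeomorphism with respect to the prodiscrete topology (its inverse is $x\mapsto g^{-1}x$). Consequently, for any $G$-invariant subset $X\subset S^G$ its closure $\overline X$ is again $G$-invariant: indeed $g\overline X=\overline{gX}=\overline X$ for every $g\in G$. Applying this to the manifestly $G$-invariant set $\{gs:g\in G\}$ shows that $\Sigma(s)=\overline{\{gs:g\in G\}}$ is $G$-invariant (and it is closed by definition). Now let $p\in\Sigma(s)$. By $G$-invariance of $\Sigma(s)$ we get $gp\in\Sigma(s)$ for all $g\in G$, i.e. $\{gp:g\in G\}\subset\Sigma(s)$; taking closures and using that $\Sigma(s)$ is closed yields $\Sigma(p)=\overline{\{gp:g\in G\}}\subset\Sigma(s)$, which is the first assertion. (Alternatively, one can invoke the identity $\Sigma(gs)=\Sigma(s)$ already established inside the proof of Lemma~\ref{l:pseudo-equivarian}, together with the fact that the limit of a sequence $g_ns\to p$ lies in the closure of $\{g_n s\}\subset\Sigma(s)$.)

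For the "in particular" part: since $1_G\in G$ we have $s=1_Gs\in\{gs:g\in G\}\subset\Sigma(s)$, so if $\sigma_p$ is stably injective for every $p\in\Sigma(s)$ then in particular $\sigma_s$ is stably injective, taking $p=s$. Conversely, assume $\sigma_s$ is stably injective, so $\sigma_q$ is injective for every $q\in\Sigma(s)$. Fix $p\in\Sigma(s)$; by the first part $\Sigma(p)\subset\Sigma(s)$, hence $\sigma_q$ is injective for every $q\in\Sigma(p)$, i.e. $\sigma_p$ is stably injective. This proves the equivalence.

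I do not expect a genuine obstacle here: the argument is a soft point-set topology fact about orbit closures, the only thing to be careful about being that translations are homeomorphisms (so that they commute with taking closures). No finiteness of $M$ or $A$ is needed, matching the hypotheses of the lemma.
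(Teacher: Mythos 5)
Your proposal is correct and follows essentially the same route as the paper: both argue that $\Sigma(s)$ is closed and $G$-invariant, deduce $gp\in\Sigma(s)$ for all $g\in G$ and hence $\Sigma(p)\subset\Sigma(s)$ by taking closures, and then settle the ``in particular'' part by noting $s\in\Sigma(s)$. Your extra remark that translations are homeomorphisms (so closures of $G$-invariant sets remain $G$-invariant) merely makes explicit a step the paper takes for granted.
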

\begin{proof}
Let $p \in \Sigma(s)$. Then for every $g \in G$, we have $gp \in \Sigma (s)$ as  $\Sigma(s)$ is a $G$-invariant subset of $S^G$. Since $\Sigma(s)$ is closed and $\Sigma(p) = \overline{\{gs \colon g \in G\}}$ by definition, we deduce that $\Sigma (p) \subset \Sigma(s)$ for all $p \in \Sigma(s)$.  
\par 
From this, the last statement follows immediately from the definition of stable injectivity and we simply note that $s \in \Sigma(s)$.  
\end{proof}
\par 
\begin{remark}
With the notation as in Lemma~\ref{l:post-injective-stable}, we remark that the inclusion $\Sigma(s) \subset \Sigma(p)$ may fail for some $p \in \Sigma(s)$. 
\par 
For example, let $G= \Z$, $A=\{0,1\}$,  and let $M= \{-1,0,1\} \subset G$. Let $S=A^{A^M}$ and let $u, v \colon A^M \to A$ be two distinct maps. Let $s \in S^G$ defined by $s(0)= u$ and $s(n)=v$ for all $n \in \Z \setminus \{0\}$. Then $\Sigma(s)$ consists of the translates of $s$ and the configuration constant $p\in S^G$ given by $p(n)= v$ for all $n \in \Z$. It is  clear that $\Sigma(p) =\{p\} \subsetneq \Sigma(s)$. 
\end{remark}

\section{The monoid of ANUCA} 
\label{s:monoid}

We begin with a lemma which describes the action of the translations of the configurations of local defining maps in a composition of ANUCA. 

\begin{lemma}
\label{l:inverse-asynchronous-ca} Let $M$  be a subset of a group $G$. Let $A$ be a set and let $S=A^{A^M}$. Let $p,q,s \in S^G$  and suppose that $\sigma_p\circ \sigma_q= \sigma_s$. Then for every $g \in G$, we have $\sigma_{gs} \circ \sigma_{gt} = \sigma_{gq}$. 
In particular, if 
$\sigma_p \circ \sigma_q= \Id_{A^G}$, then for every $g \in G$, one has $\sigma_{gp} \circ \sigma_{gq}=  \Id_{A^G}$. 
\end{lemma}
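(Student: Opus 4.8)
The plan is to exploit the almost-equivariance identity \eqref{e:soft-equivariant} from Lemma~\ref{l:pseudo-equivarian}, namely $\sigma_{gr}(gx) = g\sigma_r(x)$ for all $r \in S^G$, $g \in G$, $x \in A^G$. The statement as typed contains a typo: it should read $\sigma_{gp} \circ \sigma_{gq} = \sigma_{gs}$ (the letters $s$ and $t$ in the displayed conclusion should be $p$ and $q$, and the right-hand side should be $gs$, not $gq$; the "in particular" clause is the case $s$ such that $\sigma_s = \Id$, which forces $\sigma_{gs} = \Id$ for every $g$ since $\Id$ is a CA with constant local rule and is translation-invariant). So the real content is: $\sigma_p \circ \sigma_q = \sigma_s$ implies $\sigma_{gp} \circ \sigma_{gq} = \sigma_{gs}$ for all $g$.

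First I would fix $g \in G$ and an arbitrary $x \in A^G$, and write $x = g y$ where $y = g^{-1}x$; this is legitimate since $G$ acts on $A^G$ by bijections. Then I would compute, applying \eqref{e:soft-equivariant} twice:
\begin{align*}
(\sigma_{gp} \circ \sigma_{gq})(x) &= \sigma_{gp}(\sigma_{gq}(gy)) \\
&= \sigma_{gp}\bigl(g\,\sigma_q(y)\bigr) \\
&= g\,\sigma_p\bigl(\sigma_q(y)\bigr) \\
&= g\,\sigma_s(y) \\
&= \sigma_{gs}(gy) = \sigma_{gs}(x),
\end{align*}
where the fourth equality uses the hypothesis $\sigma_p \circ \sigma_q = \sigma_s$ and the last two lines apply \eqref{e:soft-equivariant} in reverse with $r = s$. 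Since $x \in A^G$ was arbitrary, $\sigma_{gp} \circ \sigma_{gq} = \sigma_{gs}$.

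For the "in particular" clause, observe that if $\sigma_p \circ \sigma_q = \Id_{A^G}$, then $\Id_{A^G} = \sigma_s$ where one may take $s$ to be any constant configuration whose local map is the identity-extracting rule, or more directly: applying the identity just proved, $\sigma_{gp} \circ \sigma_{gq}$ equals $g \circ \Id \circ g^{-1} = \Id_{A^G}$ by the displayed computation with $\sigma_s$ replaced throughout by $\Id$. There is no real obstacle here — the only mild subtlety is the typo reconciliation and the observation that $g\,\Id(y) = gy = \Id(gy)$, so the chain closes; everything else is a direct substitution into \eqref{e:soft-equivariant}, and no finiteness of $M$ or $A$ is needed.
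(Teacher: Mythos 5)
Your proof is correct and follows essentially the same route as the paper: both fix $g$, write an arbitrary configuration as $gy$, apply the almost-equivariance identity \eqref{e:soft-equivariant} on each factor and once more in reverse, and then note the identity case as an immediate consequence. You also correctly diagnose and repair the typos in the stated conclusion (the paper's own proof establishes $\sigma_{gp}\circ\sigma_{gq}=\sigma_{gs}$, which is clearly what was intended).
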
 

\begin{proof}
Suppose first that $\sigma_p\circ \sigma_q= \sigma_s$ for some  $p,q,s \in S^G$. Let $g \in S$ and let $x \in A$. Then we infer from the formula \eqref{e:soft-equivariant} the following computation:  
\begin{align*}
    ( \sigma_{gp} \circ \sigma_{gq})(gx) 
    & = \sigma_{gp}( \sigma_{gq}(gx)) & \\ 
    & = \sigma_{gp}(g\sigma_q(x))  & \text{(by }\eqref{e:soft-equivariant}) 
    \\ & = 
  g\sigma_p(\sigma_q(x))  & \text{(by }\eqref{e:soft-equivariant}) 
  \\ & = g\sigma_s(x)  & \text{(as }\sigma_p \circ \sigma_s= \Id_{A^G}) 
  \\ & = \sigma_{gs}(gx). & \text{(by }\eqref{e:soft-equivariant}) 
\end{align*}
\par 
Since $y= g^{-1}x \in A^G$ is arbitrary, we deduce that 
\[ 
( \sigma_{gp} \circ \sigma_{gs})(y) = y 
\]
for all  $y \in A^G$ and the proof of the first statement is thus complete. The last statement is an obvious consequence. 
\end{proof}
 
The following result shows that the composition of two ANUCA is again an ANUCA. It follows that that set of ANUCA over a given universe and a given  alphabet form a monoid with respect to the composition operation. 

\begin{theorem}
\label{t:monoid-asynchronous}
Let $M, N \subset G$ be subsets of a group $G$. Let $A$ be a set and let $s \in S^G$,  $t \in T^G$ where  $S=A^{A^M}$, $T= A^{A^N}$. Then there exists $q \in Q^G$ where $Q= A^{A^{MN}}$ such that 
$\sigma_s \circ \sigma_t = \sigma_q$. 
\end{theorem}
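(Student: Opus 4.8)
The plan is to construct the configuration $q \in Q^G$ explicitly by chasing through the defining formulas, exploiting the ``almost equivariance'' relation \eqref{e:soft-equivariant} of Lemma~\ref{l:pseudo-equivarian}. First I would fix $g \in G$ and an arbitrary $x \in A^G$ and compute $(\sigma_s \circ \sigma_t)(x)(g) = s(g)\big(((g^{-1}\sigma_t(x))\vert_M\big)$. The key observation is that the value $\sigma_t(x)(gm)$, for $m \in M$, depends only on $(gm)^{-1}x$ restricted to $N$, i.e. on $x\vert_{gmN}$; hence $(g^{-1}\sigma_t(x))\vert_M$ depends only on $x\vert_{gMN}$, which is the same as $(g^{-1}x)\vert_{MN}$ up to translation. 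More precisely, I would show that there is a map, depending only on $s(g)$ and on the tuple of local maps $\big(t(gm)\big)_{m \in M}$, that computes $(g^{-1}\sigma_t(x))\vert_M$ from $(g^{-1}x)\vert_{MN}$, and then composing with $s(g)$ yields $\sigma_s(\sigma_t(x))(g)$ as a function of $(g^{-1}x)\vert_{MN}$ alone.

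Concretely, the plan is: for each $g \in G$, define an auxiliary map $\theta_g \colon A^{MN} \to A^M$ by $\theta_g(y)(m) = t(gm)\big((m^{-1}y)\vert_N\big)$ for $y \in A^{MN}$, $m \in M$ (this is well-defined since $m^{-1}MN \supset N$). One checks directly from the definitions that for all $x \in A^G$ one has $(g^{-1}\sigma_t(x))\vert_M = \theta_g\big((g^{-1}x)\vert_{MN}\big)$; this is essentially the content of \eqref{e:induced-local-maps}--\eqref{e:induced-local-maps-proof} applied with memory $N$ on the translated configuration. Then I would set
\begin{equation*}
q(g) \coloneqq s(g) \circ \theta_g \colon A^{MN} \to A,
\end{equation*}
so that $q(g) \in Q = A^{A^{MN}}$, giving $q \in Q^G$. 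By construction,
\begin{equation*}
\sigma_q(x)(g) = q(g)\big((g^{-1}x)\vert_{MN}\big) = s(g)\Big(\theta_g\big((g^{-1}x)\vert_{MN}\big)\Big) = s(g)\big((g^{-1}\sigma_t(x))\vert_M\big) = (\sigma_s\circ\sigma_t)(x)(g),
\end{equation*}
which is the desired identity.

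The verification of the pointwise identity $(g^{-1}\sigma_t(x))\vert_M = \theta_g\big((g^{-1}x)\vert_{MN}\big)$ is the one place that requires care: one must correctly track how the shift by $g$ interacts with the shift by $m$ and the restriction to $N$, using the conventions $gx(gh)=x(h)$ and $(gx)(h)=x(g^{-1}h)$ consistently. I expect this bookkeeping to be the main (though routine) obstacle; once it is in place, the definition of $q$ and the final computation are immediate. An alternative, cleaner route avoiding explicit index chasing is to argue via the induced local maps: the restriction $\sigma_s(\sigma_t(x))\vert_E = f^+_{E, s\vert_E}\big(\sigma_t(x)\vert_{EM}\big) = f^+_{E,s\vert_E}\big(f^+_{EM, t\vert_{EM}}(x\vert_{EMN})\big)$ for every finite $E$, and since this composite depends on $x$ only through $x\vert_{EMN}$ and is ``local'' at each $g \in E$, a standard argument (as in the classical CA case) produces the local defining maps $q(g)$ with memory $MN$; I would likely present the explicit construction above since it is self-contained and short. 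No finiteness of $M$, $N$, or $A$ is needed, matching the hypotheses of the statement.
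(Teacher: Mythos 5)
Your proposal is correct and follows essentially the same route as the paper: your auxiliary map $\theta_g$ is exactly the induced local map $f^+_{M,(g^{-1}t)\vert_M}\colon A^{MN}\to A^M$ of Section~\ref{s:induced-local-map}, and your definition $q(g)=s(g)\circ\theta_g$ coincides with the paper's $q(g)=s(g)\circ f^+_{MN, g^{-1}t\vert_{MN}}$. The bookkeeping identity $(g^{-1}\sigma_t(x))\vert_M=\theta_g\big((g^{-1}x)\vert_{MN}\big)$ that you flag as the delicate step checks out, and your observation that no finiteness hypotheses are needed matches the statement.
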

 
\begin{proof}
Fix $g \in G$ and $x \in A^G$. Let us consider the induced local maps  $f_{\{g\}, s\vert_{\{g\}}}\colon A^{gM} \to A^{\{g\}} $ and $f_{gMN, t\vert_{gMN}}^+\colon A^{gMN} \to A^{gM}$ defined in Section~\ref{s:induced-local-map}. Then we infer from the formula \eqref{e:induced-local-maps} and  \eqref{e:induced-local-maps-general} the following computation: 
\begin{align*}
    \sigma_s(\sigma_t(x))(g) & = f_{\{g\}, s\vert_{\{g\}}}(f_{gMN, t\vert_{gMN}}^+ (x\vert_{gMN})) \\ 
    & =  s(g)( f_{MN, g^{-1}t\vert_{MN}}^+((g^{-1}x)\vert_{MN}))
\end{align*}
\par 
Therefore, if we define $q \in Q^G$ by setting $q(g)  \in A^{A^{MN}}$  to be the map 
\[ 
q(g) \coloneqq   s(g) \circ f_{MN, g^{-1}t\vert_{MN}}^+ 
\] for every $g \in G$ then it follows immediately  that $\sigma_s \circ \sigma_t= \sigma_q$. The proof of the theorem is thus complete. 
\end{proof}

\section{Reversibility of stably injective ANUCA} 
\label{s:reversible-stablyinjective}

Given a group $G$ and a set $A$, we say that an ANUCA $\sigma \colon A^G \to A^G$ is \textit{reversible} if it is injective and there exists an ANUCA with \textit{finite} memory $\tau \colon A^G \to A^G$ such that $\tau \circ \sigma= \Id$. Note that in our definitions, invertibility implies reversibility for ANUCA.  
We establish the following reversibility result for stably injective ANUCA. 

\begin{theorem} 
\label{t:reversible-finite-alpha}
Let $M$  be a finite subset of a countable group $G$. Let $A$ be a finite set and let $S=A^{A^M}$. Suppose that $\sigma_s$ is stably injective for some  $s \in S^G$. Then $\sigma_s$ is reversible. 
\end{theorem}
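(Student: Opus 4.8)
The plan is to exploit stable injectivity to get a \emph{uniform} radius from which the inverse can be read off, and then to build the local defining maps of the inverse ANUCA by a limiting/compactness argument over the orbit closure $\Sigma(s)$. First I would fix an exhaustion $(E_n)_{n\in\N}$ of $G$ by finite subsets and argue by contradiction that there is an integer $r$ and a finite $F=E_r\subset G$ with the following property: for every $p\in\Sigma(s)$ and every pair $x,y\in A^G$ with $\sigma_p(x)\vert_{FM^{?}}=\sigma_p(y)\vert_{\cdots}$ (a suitably enlarged window), one has $x(1_G)=y(1_G)$. If no such $r$ existed, for each $n$ we could pick $p_n\in\Sigma(s)$ and $x_n,y_n$ agreeing under $\sigma_{p_n}$ on $E_nM$ but with $x_n(1_G)\neq y_n(1_G)$; by compactness of $A^G$ and of $S^G$ (both $A$ and $S$ are finite since $A$ is finite and $M$ is finite) and by Lemma~\ref{l:post-injective-stable} ($\Sigma$ is closed under limits inside $\Sigma(s)$), passing to a subsequence yields $p\in\Sigma(s)$ and distinct $x,y\in A^G$ with $\sigma_p(x)=\sigma_p(y)$, contradicting stable injectivity of $\sigma_p$. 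Here I would use Lemma~\ref{l:continuity-x} to pass the limit through $\sigma_{(\cdot)}(\cdot)$.

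Next, with this uniform window $F$ in hand, I would define, for each $g\in G$, a candidate local rule $\tau(g)\colon A^{gF}\to A$ (equivalently, after translating, a map $A^F\to A$ depending on $g$): given a legal pattern of the form $\sigma_s(x)\vert_{gF}$ it outputs $x(g)$, which is well defined by the uniform separation property applied to the translate $gp$ where $p$ encodes $s$ near $g$; on patterns not of this form the value can be set arbitrarily (the image is closed but we only need $\tau\circ\sigma_s=\Id$, so values off the image are irrelevant). The subtle point is that $\tau(g)$ must depend only on finitely much data about $s$ near $g$ — indeed it depends only on $s\vert_{gFM}$, a finite window — so the assignment $g\mapsto \tau(g)$ is a genuine element $t\in T^G$ with $T=A^{A^{F'}}$ for an appropriate finite $F'$ (taking $F'$ to absorb the translation). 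Then $\sigma_t$ is an ANUCA with finite memory $F'$, and by construction $\sigma_t(\sigma_s(x))(g)=\tau(g)(\sigma_s(x)\vert_{gF})=x(g)$ for all $x$ and $g$, i.e. $\sigma_t\circ\sigma_s=\Id$.

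Finally, $\sigma_s$ is injective because stable injectivity includes injectivity of $\sigma_s$ itself (as $s\in\Sigma(s)$), so $\sigma_s$ satisfies the definition of reversibility. The main obstacle I anticipate is the bookkeeping in the compactness argument — specifically making sure the uniform window $F$ controls \emph{all} translates simultaneously and that the dependence of $\tau(g)$ on $s$ is through a fixed finite window independent of $g$; this is where the almost-equivariance Lemma~\ref{l:pseudo-equivarian} (relating $\sigma_{gs}(gx)$ to $g\sigma_s(x)$) is essential, since it lets me reduce the separation statement at an arbitrary $g$ to the separation statement at $1_G$ for the translated configuration $g^{-1}s\in\Sigma(s)$, to which the uniform bound applies. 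A secondary technical care is choosing the enlarged windows (powers of $M$, products $FM$, etc.) consistently so that the induced local maps $f^+_{E,w}$ from Section~\ref{s:induced-local-map} compose correctly; I would keep these explicit via \eqref{e:induced-local-maps-proof}.
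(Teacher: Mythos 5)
Your proposal is correct and follows essentially the same route as the paper's proof: a compactness/contradiction argument over $\Sigma(s)$ (using the finiteness of $A$ and $S$, the almost-equivariance of Lemma~\ref{l:pseudo-equivarian} to translate everything to $1_G$, and continuity to pass to the limit) yields a uniform finite window $N$ from which preimage values can be read off, and the inverse ANUCA is then assembled cell-by-cell with arbitrary values off the closed image $\sigma_s(A^G)$. The only cosmetic difference is that you state the uniform separation property quantified over all $p\in\Sigma(s)$ while the paper quantifies over all $g\in G$ for the single configuration $s$; these are interchangeable via the translation lemma, and both proofs extract the contradiction from the injectivity of a limit point $\sigma_p$, $p\in\Sigma(s)$.
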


\begin{proof} 
We can suppose without loss of generality that $1_G \in M$. 
Since $G$ is countable by hypothesis, we can find an increasing sequence of finite subsets $(E_n)_{n \in \N}$ of $G$ such that $G= \cup_{n \in \N} E_n$ and $M \subset E_0$. 
\par 
For every $n \in \N$ and every configuration $w \in S^{E_n}$,  
we have an induced local map  $f_{E_n,s}^+ \colon A^{E_nM} \to A^{E_n}$ defined as in \eqref{e:induced-local-maps}. 
\par 
Let $\Gamma= \sigma_s(A^G)$. We claim that there exists $N\subset G$ finite and such that 
 $\sigma_s^{-1}(y)(g)\in A$ depends uniquely on the restriction $y\vert_{gN}$ for every configuration $y\in \Gamma$ and every group element $g \in G$. 
\par 
Indeed, suppose on the contrary that the claim is false. Then for every $n \in \N$, we can find $g_n \in G$ and $u_n, v_n \in A^G$ such that $u_n(g_n) \neq v_n(g_n)$ and $\sigma_s(u_n)\vert_{gE_n} = \sigma_s(v_n)\vert_{gE_n}$. Therefore, 
$g^{-1}\sigma_s(u_n)\vert_{E_n} = g^{-1}\sigma_s(v_n)\vert_{E_n}$ 
and 
we infer from \eqref{e:soft-equivariant} that 
\[
\sigma_{g_n^{-1}s}(g_n^{-1}u_n)\vert_{E_n} =g_n^{-1}\sigma_s(u_n)\vert_{E_n} =g_n^{-1}\sigma_s(v_n)\vert_{E_n}  =\sigma_{g_n^{-1}s}(g^{-1}v_n)\vert_{E_n}.
\]
\par 
Hence, by setting $ s_n= g_n^{-1}s\vert_{E_n} \in S^{E_n}$,  we can deduce from  \eqref{e:induced-local-maps-proof} that: 
\begin{equation}
\label{e:proof-reversible-1} 
f_{E_n, s_n }^+((g_n^{-1}u_n)\vert_{E_nM}) = f_{E_n,s_n }^+((g_n^{-1}v_n)\vert_{E_nM}). 
\end{equation}
\par 
Observe that $S$ is finite since $M$ and $A$ are finite. Hence, 
the space $S^G$ is compact with respect to the prodiscrete topology. Thus, the closed subset  $\Sigma(s) \subset S^G$ is also compact. 
\par 
Consequently, since $g_n^{-1}s \in \Sigma(s)$ we can find a subsequence $(g_{n_k}^{-1}s)_{k \in \N}$ of $(g_n^{-1}s)_{n \in \N}$ which converges to a configuration $p \in \Sigma(s)$. Hence, up to restricting again to another subsequence and reindexing, we can suppose without loss of generality that for every $k \in \N$, we have
\begin{equation}
    \label{e:proof-reversible-1-2a}
    g_{n_{k}}^{-1}s\vert_{E_k}= p \vert_{E_k}. 
\end{equation}
\par 
Note that $E_kM \subset E_{n_k}M$ for all $k \in \N$ since $n_k \geq k$.  Therefore, if we denote $x_k =g_{n_k}^{-1}u_{n_k}\in A^{G}$ and $y_k=g_{n_k}^{-1}v_{n_k}\in A^{G}$, we deduce immediately from \eqref{e:proof-reversible-1} and \eqref{e:proof-reversible-1-2a} that for every $k \in \N$, we have: 
\begin{equation}
    \label{e:proof-reversible-2} 
f_{E_k, p\vert_{E_k} }^+(x_k\vert_{E_kM}) = f_{E_n, p\vert_{E_k} }^+(y_k\vert_{E_kM}).  
\end{equation}
\par 
Consequently, the combination of the  relations  \eqref{e:induced-local-maps-proof} and \eqref{e:proof-reversible-2} 
imply that 
\begin{equation}
    \label{e:proof-reversible-2-3-4} 
\sigma_p(x_k) \vert_{E_k} =  \sigma_p(y_k) \vert_{E_k}.  
\end{equation}
\par 
Moreover, $x_k(1_G) \neq y_k(1_G)$ for all $k \in \N$ since $u_{n_k}(g_{n_k}) \neq v_{n_k}(g_{n_k})$. As the space $A^G \times A^G$ is compact with respect to the prodiscrete topology, we can suppose without loss of generality, up to passing to a subsequence, that $x_k$ converges to some $x \in A^G$ and $y_k$ converges to some $y\in A^G$ as well.   
\par 
Therefore, since $\sigma_p$ is continuous by Lemma~\ref{l:asyn-ca-continuous} and since $(E_k)_{k \in \N}$ is an exhaustion of $G$, we can infer from 
\eqref{e:proof-reversible-2-3-4} by passing to the limit that 
\[
\sigma_p(x)= \sigma_p(y). 
\]
\par 
On the other hand, we have $x(1_G) \neq y(1_G)$ since $x_k(1_G) \neq y_k(1_G)$ for all $k \in \N$. In particular, $x \neq y$ and it follows that $\sigma_p$ is not injective. However, since $\sigma_s$ is stably injective and $p \in \Sigma(s)$, we deduce that $\sigma_p$ is injective, which is a contradiction.
\par 
Hence, we have proved the claim that 
there exists a finite subset $N\subset G$  such that 
 $\sigma_s^{-1}(y)(g)\in A$ depends uniquely on the restriction $y\vert_{gN}$ for every configuration $y\in \Gamma$ and every group element $g \in G$. 
 \par 
To complete the proof of the theorem, let $T= A^{A^N}$. We construct a configuration $q \in T^G$ as follows. Fix some $a_0 \in A$. For every $g \in G$, the property of the set $N$ shows that we have a well-defined map $\varphi_g \colon \Gamma_{gN} \to A$ given by the formula: 
\[
\varphi_g(z) = \sigma_s^{-1}(y)(g)  
\]
for every $z \in \Gamma_{gN}$ and $y \in \Gamma$ which extends $z$. We define $q(g) \colon A^N \to A$ by setting 
$q(g)(t)= \varphi_g(gt)$ for all $t \in A^N$ such that $gt \in \Gamma_{gN}$ and we simply put $q(g)(t)=a_0$ whenever $t \in A^N$ such that $gt \notin \Gamma_{gN}$. 
\par 
Hence, we obtain $q \in T^G$ and it is clear from our construction that $\sigma_s^{-1}(y)= \sigma_q(y)$ for all $y \in \Gamma$. The proof is thus complete. 
\end{proof} 

\par 
Under the same assumptions of Theorem~\ref{t:reversible-finite-alpha}, we can actually show that stable injectivity implies \emph{stable reversibility} which is a stronger property than reversibility. 

\begin{definition}
\label{d:stable-reversibility}
Let $M$  be a finite subset of a countable group $G$. Let $A$ be a finite set and let $S=A^{A^M}$. Let $s \in S^G$. Then $\sigma_s$ is said to be \emph{stably reversible} if there exist $N\subset G$ finite and $q \in T^G$ where $T=A^{A^N}$ such that for every $p \in \Sigma(s)$, we can find $w \in \Sigma(q)$ such that $\sigma_w \circ \sigma_p=\Id$. 
\end{definition}
\par 
Our result can be stated as follows. 

\begin{theorem}
\label{t:reversible-finite-alpha-beta}
Let $M$  be a finite subset of a countable group $G$. Let $A$ be a finite set and let $S=A^{A^M}$. Let $s \in S^G$ and suppose that $\sigma_s$ is stably injective.  
Then $\sigma_s$ is stably reversible. 
\end{theorem}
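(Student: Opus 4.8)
The plan is to upgrade the proof of Theorem~\ref{t:reversible-finite-alpha} so that the single finite neighbourhood $N$ and, more importantly, the single configuration $q \in T^G$ we construct there work simultaneously for \emph{every} $p \in \Sigma(s)$. The key observation is that Lemma~\ref{l:post-injective-stable} gives $\Sigma(p) \subset \Sigma(s)$ for every $p \in \Sigma(s)$, so if $\sigma_s$ is stably injective then so is $\sigma_p$ for every $p \in \Sigma(s)$; thus each such $\sigma_p$ is reversible by Theorem~\ref{t:reversible-finite-alpha}. What we must do is make the data uniform over the orbit closure.

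First I would run the compactness argument of Theorem~\ref{t:reversible-finite-alpha} but quantified over all of $\Sigma(s)$ at once: I claim there is a \emph{single} finite $N \subset G$ such that for every $p \in \Sigma(s)$ and every $y \in \sigma_p(A^G)$, the value $\sigma_p^{-1}(y)(g)$ depends only on $y\vert_{gN}$. If not, then for each $n$ one finds $p_n \in \Sigma(s)$, $g_n \in G$, and $u_n, v_n \in A^G$ with $u_n(g_n) \neq v_n(g_n)$ but $\sigma_{p_n}(u_n)\vert_{g_n E_n} = \sigma_{p_n}(v_n)\vert_{g_n E_n}$. Using the almost-equivariance \eqref{e:soft-equivariant} and the $G$-invariance and closedness of $\Sigma(s)$, I translate by $g_n^{-1}$ to get $q_n = g_n^{-1} p_n \in \Sigma(s)$, pass to a subsequence so that $q_n \to p \in \Sigma(s)$ and the translated configurations converge to $x \neq y$ with $x(1_G) \neq y(1_G)$, and then apply Lemma~\ref{l:continuity-x} (continuity jointly in the configuration of local maps and the point) to conclude $\sigma_p(x) = \sigma_p(y)$, contradicting injectivity of $\sigma_p$. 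This is essentially the proof of Theorem~\ref{t:reversible-finite-alpha} with $s$ replaced by a varying $p_n \in \Sigma(s)$, and the joint continuity lemma is exactly what makes the limit work.

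Next, with this uniform $N$ in hand and $T = A^{A^N}$, I must produce a single $q \in T^G$ whose orbit closure $\Sigma(q)$ contains, for each $p \in \Sigma(s)$, an inverse $w$ with $\sigma_w \circ \sigma_p = \Id$. The natural choice is to take $q$ to be the local-rule configuration associated with $s$ itself: for each $g$, let $q(g) \colon A^N \to A$ send $t$ to $\sigma_s^{-1}(y)(g)$ for $y \in \sigma_s(A^G)$ extending $gt$ (well-defined by the uniform $N$), and to a fixed $a_0 \in A$ otherwise, exactly as in Theorem~\ref{t:reversible-finite-alpha}; this already gives $\sigma_q \circ \sigma_s = \Id$. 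The point is that this $q$ depends on $s$ only through the "inversion data'' at each site, and I claim that for $p \in \Sigma(s)$ with $p = \lim_k g_k s$, the configurations $g_k q$ converge (after passing to a subsequence, by compactness of $T^G$) to some $w \in \Sigma(q)$, and that this $w$ is an inverse of $\sigma_p$. To see the latter: $g_k q$ is the inversion-data configuration for $g_k s$, so by Lemma~\ref{l:inverse-asynchronous-ca} (translating the identity $\sigma_q \circ \sigma_s = \Id$) we have $\sigma_{g_k q} \circ \sigma_{g_k s} = \Id$; then applying Lemma~\ref{l:continuity-x} to $\sigma_{g_k q}(\sigma_{g_k s}(x)) = x$, letting $k \to \infty$ with $g_k s \to p$ and $g_k q \to w$, and using that $\sigma_p$ is continuous, yields $\sigma_w(\sigma_p(x)) = x$ for all $x$, i.e.\ $\sigma_w \circ \sigma_p = \Id$.

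The main obstacle I anticipate is the convergence $g_k q \to w$ and the identification of the limit: a priori $g_k q$ need not converge along the \emph{same} subsequence for which $g_k s \to p$, so one passes to a sub-subsequence using compactness of $S^G \times T^G$, which is harmless since any limit point $w$ lies in $\Sigma(q)$ by construction. A subtler point is whether the "default value $a_0$'' choices in the definition of $q(g)$ could make the limit $w$ fail to invert $\sigma_p$ on configurations $y \in \sigma_p(A^G)$ whose local patterns were "off-image'' for the approximating $g_k s$; this is handled by noting that $\sigma_{g_k q}(\sigma_{g_k s}(x)) = x$ holds for \emph{all} $x \in A^G$ (not just on the image), so the limiting identity $\sigma_w \circ \sigma_p = \Id$ is a genuine global identity and the off-image values are never tested. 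Once these continuity-and-compactness bookkeeping steps are arranged, the theorem follows, completing the implication (iii)$\implies$(ii) of Theorem~\ref{t:intro-characterization-reversible-ANUCA}.
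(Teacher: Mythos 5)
Your proposal is correct and follows essentially the same route as the paper: take the single pair $(N,q)$ with $\sigma_q\circ\sigma_s=\Id$ from Theorem~\ref{t:reversible-finite-alpha}, translate via Lemma~\ref{l:inverse-asynchronous-ca} to get $\sigma_{g_kq}\circ\sigma_{g_ks}=\Id$, extract a limit $w\in\Sigma(q)$ by compactness of $T^G$, and pass to the limit in $\sigma_{g_kq}(\sigma_{g_ks}(x))=x$ using the continuity lemmas. The only divergence is your first paragraph: the uniformization of $N$ over all of $\Sigma(s)$ is superfluous, since the paper (and indeed your own second paragraph) only ever uses the $N$ and $q$ produced for $s$ itself.
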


\begin{proof}
By Theorem~\ref{t:reversible-finite-alpha}, there exists a finite subset $N \subset G$ and $q \in T^G$ where $T=A^{A^N}$ such that $\sigma_q \circ \sigma_s= \Id$. 
\par 
Let $p \in \Sigma(s)$. Then there exists a sequence $(g_n)_{n \in \N}$ in  $G$ such that the sequence $(g_ns)_{n \in \N}$ converges to $p$ in the space $S^G$ with respect to the prodiscrete topology. 
\par 
Since $T^G$ is compact with respect to the prodiscrete topology as $T$ is finite, we can suppose, up to passing to a subsequence, that  $(g_nq)_{n \in \N}$ converges to some  $w \in T^G$. In particular, we have $w \in \Sigma(q)$ by the definition of $\Sigma(q)$. 
\par 
We are going to prove that $\sigma_w \circ \sigma_p= \Id$. It suffices to show that $\sigma_w(y)=x$ for all $y= \sigma_p(x)$ where $x \in A^G$. 
\par 
Hence, let us fix $x \in A^G$ and let $y=\sigma_p(x)$. Since  $\lim_{n \to \infty}g_n q = w$,  Lemma~\ref{l:continuity} implies that $\lim_{n \to \infty}\sigma_{g_n p}(y) = \sigma_w(y)$. 
\par 
We claim that $\lim_{n \to \infty}\sigma_{g_n q}(\sigma_{p}(x)) = x$. Indeed, let $F \subset G$ be a finite subset. As $\lim_{n \to \infty} g_n s = p$, there exists $n_0 \in \N$ such that $p\vert_{FN}= (g_ns)\vert_{FN}$ for all $n \geq n_0$. 
\par 
From  \eqref{e:induced-local-maps} and  \eqref{e:induced-local-maps-general}, we find that  for all $n \geq n_0$:
\begin{align*}
    \sigma_{g_n q}(\sigma_{p}(x))\vert_F & =
    f_{F, (g_nq)\vert_{F}}^+
    (f_{FN, p\vert_{FN}}^+ (x\vert_{FNM})) \\
    & = f_{F, (g_nq)\vert_{F}}^+
    (f_{FN, (g_ns)\vert_{FN}}^+ (x\vert_{FNM})) \\ 
    & =  \sigma_{g_n q}(\sigma_{g_ns}(x))\vert_F \\ 
    & = x\vert_F. 
\end{align*}
\par 
It follows that $\lim_{n \to \infty}\sigma_{g_nq}(y)=\sigma_{g_n q}(\sigma_{p}(x)) = x$ and the claim is proved. 
\par 
Consequently, $x= \sigma_w(y)$ as we als have $\lim_{n \to \infty}\sigma_{g_n p}(y) = \sigma_w(y)$. Therefore, $\sigma_w \circ \sigma_p=\Id$ as $x \in A^G$ is arbitrary. We conclude that $\sigma_s$ is stably reversible and the proof of the theorem is thus complete. 
\end{proof}
\par

\section{Disturbance of CA over   residually finite group universes} 
\label{s:app-disturbed-sofic}

Recall that a configuration $s \in S^G$, where $S$ is a set and $G$ is a group, 
 is said to be \emph{constant} if $s(g)=s(h)$ for all $g,h \in G$. We have the following simple observation.

\begin{lemma}
\label{l:asym-constant-s}
Let $S$ be a finite set and let $G$ be a group. Suppose that $s \in S^G$ is asymptotic to a constant configuration $c \in S^G$. Then we have: 
\begin{equation}
    \label{e:application-disturbed-ca}
    \Sigma(s) = \{gs \colon g \in G\} \cup \{c\} \subset S^G.
\end{equation}
\end{lemma}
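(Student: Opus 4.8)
The plan is to describe $\Sigma(s)$ by splitting $s$ into the ``bulk'' where it coincides with $c$ and the finite ``defect'' where it does not. Write $a \in S$ for the common value of the constant configuration $c$, so that $c(g)=a$ for all $g\in G$; note that then $gc=c$ for every $g$, i.e. $c$ is a shift‑fixed configuration. Since $s$ is asymptotic to $c$, the \emph{defect set} $D \coloneqq \{g\in G \colon s(g)\neq a\}$ is finite. If $D=\varnothing$ then $s=c$, every translate $gs$ equals $c$, and the asserted equality is trivial; so I may assume $D\neq\varnothing$, and throughout that $G$ is infinite (which is the only case that is meaningful here, asymptoticity being vacuous for finite $G$, and the only case used in the sequel). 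The single computation used repeatedly is that, for $g,h\in G$,
\[
(gs)(h)=s(g^{-1}h)\neq a \iff g^{-1}h\in D \iff h\in gD ,
\]
so that $gs$ and $c$ differ exactly on the finite set $gD$.

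For the inclusion $\{gs \colon g\in G\}\cup\{c\}\subseteq\Sigma(s)$: the translates $gs$ lie in $\Sigma(s)=\overline{\{gs\colon g\in G\}}$ by definition, so I only need $c\in\Sigma(s)$. Fix a basic neighbourhood $V=\{x\in S^G \colon x\vert_F=c\vert_F\}$ of $c$, with $F\subseteq G$ finite. Since $G$ is infinite and $FD^{-1}$ is finite, I can pick $g\in G\setminus FD^{-1}$; then $gD\cap F=\varnothing$, so $gs$ agrees with $c$ on $F$, i.e. $gs\in V$. As $V$ was an arbitrary basic neighbourhood, $c\in\overline{\{gs\colon g\in G\}}=\Sigma(s)$.

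The substantive inclusion is $\Sigma(s)\subseteq\{gs\colon g\in G\}\cup\{c\}$. Let $p\in\Sigma(s)$ and suppose $p\neq c$; I must show $p$ is a translate of $s$. Since $p\neq c$, there is $h_0\in G$ with $p(h_0)\neq a$. Set $G_0\coloneqq h_0 D^{-1}\subseteq G$, a \emph{finite} set. I claim $p\in\overline{\{gs\colon g\in G_0\}}$: a neighbourhood basis at $p$ is given by the sets $W_F=\{x\in S^G\colon x\vert_F=p\vert_F\}$ with $F\subseteq G$ finite and $h_0\in F$; since $p\in\overline{\{gs\colon g\in G\}}$, each such $W_F$ contains some $gs$, and for that $g$ we have $(gs)(h_0)=p(h_0)\neq a$, which by the displayed equivalence forces $h_0\in gD$, i.e. $g\in G_0$. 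Hence every $W_F$ meets $\{gs\colon g\in G_0\}$, proving the claim. But $\{gs\colon g\in G_0\}$ is finite, hence closed in the Hausdorff space $S^G$, so $p\in\{gs\colon g\in G_0\}\subseteq\{gs\colon g\in G\}$, which would complete the argument.

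I expect the heart of the matter — and the step needing the most care — to be the localization in the last paragraph: the observation that a single defect coordinate $h_0$ of a limit point $p$ already confines the admissible translates $g$ to the fixed finite set $h_0 D^{-1}$, after which closedness of a finite set does the rest. The remaining subtleties are purely bookkeeping: the degenerate case $D=\varnothing$ and the (necessary) standing assumption that $G$ is infinite, both dispatched in the first paragraph, and the routine verification that the sets $W_F$ with $h_0\in F$ still form a neighbourhood basis at $p$.
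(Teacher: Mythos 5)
Your proof is correct, and it takes a genuinely different route from the paper's. The paper fixes a finite symmetric generating set of $G$, works with the word metric on the Cayley graph, and argues with sequences: translates $g_ns$ with $d_S(g_n,1_G)\to\infty$ converge to $c$, and conversely any limit point that is not itself a translate arises from a sequence of distinct $g_n$, which (after extracting a subsequence escaping every ball) forces the limit to be $c$. That argument implicitly assumes $G$ is finitely generated (to have the Cayley metric) and, in identifying closure points with limits of sequences, that $S^G$ is metrizable, i.e.\ $G$ countable. Your argument replaces all of this with the defect set $D=\{g\colon s(g)\neq a\}$ and works directly with neighbourhood bases in the product topology, so it applies to an arbitrary infinite group with no countability or finite-generation hypothesis. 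The key gain is the localization step: a single coordinate $h_0$ where a limit point $p$ disagrees with $c$ confines every approximating translate to the finite set $h_0D^{-1}$, and closedness of a finite subset of a Hausdorff space then pins $p$ down as an actual translate — this neatly replaces the paper's compactness-and-subsequence extraction. You are also right, and more careful than the paper, to flag that the statement is only correct for infinite $G$ (for finite $G$ every configuration is asymptotic to every constant one, yet $\Sigma(s)=\{gs\colon g\in G\}$ need not contain $c$ when $s$ is non-constant); since the lemma is only invoked for infinite universes, this is a harmless caveat, but it deserves the explicit mention you give it.
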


\begin{proof} 
For this, let $S$ be a finite symmetric generating set of $G$ and let $B_{S}(r) \subset G$ be the ball of radius $r$ in the connected Cayley graph $C_S(G)$ of $G$ associated with $S$ and the corresponding metric $d_S \colon G \times G \to \N$ defined as the length of shortest path in $C_S(G)$ that connects two vertices. 
\par 
Since $s$ is asymptotic to the constant configuration  $c \in S^G$, we can find $r_0 \geq 1$ such that $s(g)=c(1_G)$ for all $g \in G \setminus B_S(r_0)$. Let $(g_n)_{n \in \N}$ be an arbitrary sequence of elements in $G$ such that $d_S(g_n,1_G)=n+r_0+1$ for every $n \in \N$. Then it is clear from the triangle inequality that $g_ns(g)=c(1_G)$ for all $g \in B_S(n)$. Consequently, we deduce that the sequence $(g_ns)_{n \in \N}$ converges to $c \in S^G$. It follows that $c \in \Sigma(s)$. 
\par 
It is clear from the definition that $gs \subset \Sigma(s)$ for all $g \in G$. 
Hence, we find that 
$\{gs \colon g \in G\} \cup \{c\} \subset \Sigma(s)$. 
\par 
Conversely, let $p \in \Sigma(s)$ and suppose that $p \neq gs$ for all $g \in G$. Then $p$ is the limit of $(g_ns)_{n \in \N}$ for some sequence $(g_n)_{n \in \N}$ of distinct elements of $G$. 
\par 
Since the ball $B_S(r)$ is finite for every $r \in \N$, we can thus find a subsequence $(g_{n_k})_{k \in \N}$ of $(g_n)_{n \in \N}$ such that $(d_S(g_n, 1_G))_{n \in \N}$ forms a strictly increasing sequence of positive integers. In particular, we deduce that $d_S(g_n, 1_G) \geq n$ for every $n \in \N$. 
\par 
It is then clear that $(g_{n+r_0+1}s)\vert_{B_S(n)} = c\vert_{B_S(n)}$ for all $n \in \N$. It follows that $p\vert_{B_S(n)}=c\vert_{B_S(n)}$ for all $n \in \N$. Consequently, we have $p=c$. This shows  that 
$\Sigma(s) \subset \{gs \colon g \in G\} \cup \{c\}$ and the relation \eqref{e:application-disturbed-ca} is proved. 
The proof of the lemma is thus complete. 
\end{proof} 
\par 
As an application of Theorem~\ref{t:reversible-finite-alpha-beta}, we obtain the following  surjunctivity property of locally disturbed injective classical CA.

\begin{theorem}
\label{c:application-disturbed-ca} Let $M$ be a finite subset of a residually finite group $G$. Let $A$ be a finite set and let $S=A^{A^M}$. Let $s \in S^G$ be asymptotic to a constant configuration $c \in S^G$. Then $\sigma_s$ and $\sigma_c$ are invertible whenever they are both  injective. 
\end{theorem}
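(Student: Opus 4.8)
The plan is to reduce the hypotheses to stable injectivity of $\sigma_s$, invoke Theorem~\ref{t:reversible-finite-alpha-beta} to obtain a finite‑memory left inverse, upgrade injectivity to bijectivity by a finite‑quotient approximation that exploits residual finiteness, and then read off invertibility. The same three steps, run with $s$ replaced by the constant configuration $c$, dispose of $\sigma_c$ at the same time.

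First I would check that $\sigma_s$ is stably injective. Since $A$ and $M$ are finite, $S=A^{A^M}$ is finite, so Lemma~\ref{l:asym-constant-s} gives $\Sigma(s)=\{gs\colon g\in G\}\cup\{c\}$. For $p=gs$ the map $\sigma_{gs}$ is injective because $\sigma_s$ is (Lemma~\ref{l:pseudo-equivarian}), and $\sigma_c$ is injective by hypothesis; hence $\sigma_p$ is injective for every $p\in\Sigma(s)$, i.e.\ $\sigma_s$ is stably injective. Theorem~\ref{t:reversible-finite-alpha-beta} then yields a finite $N\subset G$ and $q\in T^G$, $T=A^{A^N}$, with $\sigma_q\circ\sigma_s=\Id$. (The countability of $G$ required there is harmless here: one may assume it, or replace the sequential‑compactness arguments of Section~\ref{s:reversible-stablyinjective} by their net‑theoretic analogues.) Enlarging $M$ to $M\cup N\cup\{1_G\}$ and padding all local maps accordingly, I may assume $N\subseteq M$, $1_G\in M$, and $q\in S^G$.

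The core of the argument is surjectivity of $\sigma_s$, and this is where the non‑uniformity of $s$ is the real obstacle: $s$ is not periodic, so one cannot simply restrict $\sigma_s$ to a space of $\Ker\varphi$‑periodic configurations. Instead, fix a finite $E_0\supseteq M$ with $s(g)=c(1_G)$ for all $g\notin E_0$, let $F\subset G$ be an arbitrary finite set, and apply Lemma~\ref{l:separation-forte-res} to the pair $(M,E_0\cup F)$. This produces a finite $K\subset G$, a finite group $H$ and a surjection $\varphi\colon G\to H$ with $E_0\cup F\cup M\subseteq K$, with $\varphi|_K$ bijective, and with $\varphi(KM\setminus K)\cap\varphi(E_0\cup F)=\varnothing$. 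The last condition is precisely what forces the non‑constant part of $s$ to stay inside the fundamental domain $K$: for $g\in KM\setminus K$ with coset representative $k_g\in K$ one has $g\notin E_0$ (since $g\notin K\supseteq E_0$) and $k_g\notin E_0$ (otherwise $\varphi(g)=\varphi(k_g)\in\varphi(E_0)$, contradicting the separation), whence $s(g)=c(1_G)=s(k_g)$. Lemma~\ref{l:induced-local-map-reversible-1} (with $t=q$) then gives $\Psi_{K,q}\circ\Psi_{K,s}=\Id_{A^K}$, so the self‑map $\Psi_{K,s}$ of the finite set $A^K$ is injective, hence bijective. Given any $y\in A^G$, pick $x_0\in A^K$ with $\Psi_{K,s}(x_0)=y|_K$; unwinding the definition of $\Psi_{K,s}$ gives $\sigma_s(\widetilde{x_0})|_K=y|_K$, and since $F\subseteq K$ this yields $\sigma_s(\widetilde{x_0})|_F=y|_F$. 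Thus $\sigma_s(A^G)$ is dense in $A^G$; as it is also closed (Theorem~\ref{t:closed-image-asynchronous}, or simply because $\sigma_s$ is a continuous self‑map of the compact Hausdorff space $A^G$ by Lemma~\ref{l:asyn-ca-continuous}), $\sigma_s$ is surjective.

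Finally, $\sigma_s$ is now bijective, and composing $\sigma_q\circ\sigma_s=\Id$ with $\sigma_s^{-1}$ on the right gives $\sigma_s^{-1}=\sigma_q$, an ANUCA with finite memory; hence $\sigma_s$ is invertible. Applying the same three steps with $s$ replaced by $c$ (take $E_0=\{1_G\}$, so that $\Sigma(c)=\{c\}$ and stable injectivity of $\sigma_c$ is just its injectivity) shows that $\sigma_c$ is invertible as well. Besides the surjectivity step, the only subtlety worth flagging is that stable injectivity of $\sigma_s$ — and therefore the applicability of Theorem~\ref{t:reversible-finite-alpha-beta} — genuinely requires $\sigma_c$, not merely $\sigma_s$, to be injective, which is why both injectivity hypotheses appear in the statement.
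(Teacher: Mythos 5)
Your proposal is correct and follows essentially the same route as the paper: stable injectivity of $\sigma_s$ via Lemma~\ref{l:asym-constant-s} and Lemma~\ref{l:pseudo-equivarian}, a finite-memory left inverse from Section~\ref{s:reversible-stablyinjective}, and then surjectivity by combining Lemma~\ref{l:separation-forte-res} with Lemma~\ref{l:induced-local-map-reversible-1} to make $\Psi_{K,s}$ a bijection of the finite set $A^K$, together with closedness of the image (Theorem~\ref{t:closed-image-asynchronous}). The only (harmless) deviation is that you handle $\sigma_c$ by running the same finite-quotient argument with $E_0=\{1_G\}$, whereas the paper simply invokes the Gromov--Weiss surjunctivity theorem for sofic groups; your version is more self-contained.
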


\begin{proof} 
As residually finite groups are  sofic,  Gromov-Weiss the surjunctivity theorem for CA  implies that $\sigma_c$ is surjective and thus invertible. 
\par 
Let $\Gamma = \sigma_s(A^G)$ and  suppose on the contrary that $\sigma_s$ is not surjective. As $\sigma_s(A^G)$ is closed in $A^G$ in the prodiscrete topology by Theorem~\ref{t:closed-image-asynchronous}, there exists a finite subset $\Omega \subset G$ such that $\Gamma_{\Omega} = f^+_{\Omega, s\vert_\Omega} (A^{\Omega M}) \subsetneq A^{\Omega}$.
\par
Since $\sigma_s$ and $\sigma_c$ are injective, we deduce from Lemma~\ref{l:asym-constant-s} and Lemma~\ref{l:pseudo-equivarian} that $\sigma_s$ is stably injective. It follows that $\sigma_s$ is reversible by Theorem~\ref{t:reversible-finite-alpha}. Consequently, up to enlarging $M$ without loss of generality, we can find $t \in S^G$ such that $\sigma_t \circ \sigma_s = \Id$. 
\par 
Up to enlarging $M$ again, we can suppose without loss of generality that $1_G\in M$ and $M$ is symmetric, i.e., $M=M^{-1}$.  
Since $s$ and $c$ are asymptotic, we can find a finite subset $E \subset G$  such that $M \cup \Omega \subset E$ and $s \vert_{G \setminus E} = c \vert_{G \setminus E}$. 
\par 
Since $G$ is residually finite, Lemma~\ref{l:separation-forte-res} provides a finite group $H$ and a surjective group homomorphism $\varphi \colon G \to H$ such that the restriction map $\varphi\vert_{E} \colon EM^2 \to H$ is injective and there exists a finite subset $K \subset G$   such that $E \subset K$ and  $\varphi\vert_{K} \colon K \to H$ is  a bijection and $\varphi(KM \setminus K) \cap \varphi(E)=\varnothing$. 
\par 
The configurations $s,t$ induce
the maps $\Psi_{K,s}$ and  $\Psi_{K,t} \colon A^K \to A^K$ defined as in 
Section~\ref{s:induced-local-map-2}.  Every $x \in A^K$ defines $\tilde{x} \in A^G$ by $\tilde{x}(g)=x(k_g)$ for all $g \in G$ and the unique $k_g \in K$ such that $\varphi(g) = \varphi (k_g)$.
Then 
\begin{align}
    \label{e:local-periodic-proof-1-a}
\Psi_{K,s}(x) \coloneqq \sigma_s(\tilde{x})\vert_K, \quad  
\Psi_{K,t}(x) \coloneqq \sigma_t(\tilde{x})\vert_K.
\end{align}    
\par 
Since  $\sigma_t \circ \sigma_s = \Id$ and  $s(g)=s(k_g)=c(0)$ for all $g \in KM \setminus K$, we infer from Lemma~\ref{l:induced-local-map-reversible-1} that $\Psi_{K,t}\circ \Psi_{K,s} = \Id$. 
\par 
As $A^K$ is finite, we deduce that $\Psi_{K,t}$ and $\Psi_{K,s}$ are bijections. In particular, since $\Omega \subset K$, it follows that
\[
\Gamma_\Omega = \sigma_s(A^G)\vert_{\Omega} \supset \{ \sigma_s(\tilde{x})\colon x \in A^K\}\vert_\Omega = (\im \Psi_{K,s})\vert_\Omega = (A^K)\vert_\Omega = A^{\Omega}. 
\]
\par 
Hence, we obtain a contradiction to the choice of $\Omega$. We conclude that $\sigma_s$ is surjective. Since $\sigma_t \circ \sigma_s = \Id$, it follows at once that $\sigma_s$ and $\sigma_t$ are invertible. The proof of the theorem is thus complete. 
\end{proof}

\par 
Therefore, we see that when an injective ANUCA $\sigma_{s}$ was obtained by disturbing an injective classical  CA, i.e., when the configuration  $s$ is asymptotic to a constant configuration, then $\sigma_s$ is in fact invertible if the universe is a residually finite group.

\section{Disturbance of CA over amenable group universes}
\label{s:app-disturbed-amenable}

When the universe is an amenable group, Theorem~\ref{c:application-disturbed-ca} can be strengthened as follows. In essence, what happens in this case is that because of the Garden of Eden theorem, one cannot obtain injective ANUCA by disturbing the local transition rules of a finite number of cells of non-injective CA.

\begin{theorem}
\label{c:application-disturbed-ca-amenable} Let $G$ be an amenable group and let $M \subset G$ be finite. Let $A$ be a finite set and let $S=A^{A^M}$. Suppose that $\sigma_s$ is injective for some $s \in S^G$ asymptotic to a constant configuration $c$. Then $\sigma_c$ and $\sigma_s$ are  invertible. 
\end{theorem}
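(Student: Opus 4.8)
The plan is to first establish the one new ingredient — injectivity of $\sigma_c$ — by means of the Garden of Eden theorem, and then to deduce invertibility of both $\sigma_c$ and $\sigma_s$ from the results already available. Throughout we may assume $1_G \in M$ and $M = M^{-1}$, and we fix a finite set $E \subset G$ with $s\vert_{G \setminus E} = c\vert_{G \setminus E}$, so that $\sigma_s(x)(g) = \sigma_c(x)(g)$ for every $x \in A^G$ and every $g \in G \setminus E$. Note that $\sigma_c$ is a genuine CA, being an ANUCA with constant configuration of local maps; the case $G$ finite is immediate (a bijective map of the finite set $A^G$ is an ANUCA with memory $G$), so assume $G$ infinite.

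\emph{Step 1: $\sigma_c$ is pre-injective, hence surjective.} If not, choose asymptotic $x \neq y$ with $\sigma_c(x) = \sigma_c(y)$ and finite difference set $D$, and pick $g \in G$ with $gD \cap EM = \varnothing$ (possible since $G$ is infinite and $EMD^{-1}$ is finite). Then $gx, gy$ are distinct, asymptotic, and coincide on $EM$; since $\sigma_c$ is equivariant (Lemma~\ref{l:pseudo-equivarian} applied to the constant $c$), $\sigma_c(gx) = g\sigma_c(x) = g\sigma_c(y) = \sigma_c(gy)$, so $\sigma_s(gx)$ and $\sigma_s(gy)$ agree off $E$, where $\sigma_s = \sigma_c$; and they agree on $E$ because there $\sigma_s(\cdot)(h)$ reads only the values on $hM \subseteq EM$, where $gx = gy$. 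Thus $\sigma_s(gx) = \sigma_s(gy)$, contradicting injectivity of $\sigma_s$. By the Garden of Eden theorem over amenable groups, the pre-injective CA $\sigma_c$ is then surjective.

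\emph{Step 2 (the main obstacle): $\sigma_c$ is injective.} Suppose not; by Step 1 there exist $x_0 \neq y_0$ with $\sigma_c(x_0) = \sigma_c(y_0)$, whose difference set is necessarily infinite. Here I would run a Garden-of-Eden-style counting over a Følner exhaustion: on a large Følner set $F$ the patterns $x_0\vert_F$ and $y_0\vert_F$ are interchangeable for $\sigma_c$ on the $M$-interior of $F$, and since the disturbance region $E$ is finite — hence of negligible density along the exhaustion — a pigeonhole count on images, of the type behind Moore's half of the Garden of Eden theorem (the finitely many disturbed cells contributing only a subexponential correction), produces two distinct configurations with the same $\sigma_s$-image, contradicting injectivity of $\sigma_s$. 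This is precisely the assertion that a finite local perturbation of a non-injective CA over an amenable group cannot be injective, and it is the delicate point where amenability is used essentially; I expect the careful bookkeeping of the boundary/overlap terms in the tiling to be the real work. Granting this, $\sigma_c$ is injective, and combined with its surjectivity from Step 1 it is bijective; since the inverse of a bijective CA is again a CA, $\sigma_c$ is invertible, with inverse a CA $\tau_c$ (a bijection, in particular injective).

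\emph{Step 3: surjectivity of $\sigma_s$, and conclusion.} Fix $z \in A^G$. For $\gamma \in A^E$ let $z_\gamma \in A^G$ agree with $z$ off $E$ and equal $\gamma$ on $E$, and set $w_\gamma := \tau_c(z_\gamma)$, so $\sigma_c(w_\gamma) = z_\gamma$ and hence $\sigma_s(w_\gamma)$ agrees with $z$ on $G \setminus E$. Consider $\Phi \colon A^E \to A^E$, $\Phi(\gamma) := \sigma_s(w_\gamma)\vert_E$. If $\Phi(\gamma) = \Phi(\gamma')$, then $\sigma_s(w_\gamma) = \sigma_s(w_{\gamma'})$ — they agree on $E$ by assumption and off $E$ because both equal $z$ there — so $w_\gamma = w_{\gamma'}$ by injectivity of $\sigma_s$, whence $z_\gamma = z_{\gamma'}$ by injectivity of $\tau_c$, i.e. $\gamma = \gamma'$. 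Thus $\Phi$ is an injective self-map of the finite set $A^E$, so a bijection, and there is $\gamma$ with $\Phi(\gamma) = z\vert_E$, giving $\sigma_s(w_\gamma) = z$. Hence $\sigma_s$ is surjective, therefore bijective. Finally, by Lemma~\ref{l:asym-constant-s} we have $\Sigma(s) = \{gs : g \in G\} \cup \{c\}$; each $\sigma_{gs}$ is injective by Lemma~\ref{l:pseudo-equivarian} and $\sigma_c$ is injective by Step 2, so $\sigma_s$ is stably injective. By Theorem~\ref{t:reversible-finite-alpha} it is reversible, and since it is bijective its two-sided inverse is then an ANUCA with finite memory. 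Therefore $\sigma_s$ and $\sigma_c$ are both invertible.
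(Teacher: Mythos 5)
Your Steps 1 and 3 are correct, and Step 1 matches the paper's argument (the paper runs it contrapositively: not surjective $\Rightarrow$ not pre-injective by the Garden of Eden theorem $\Rightarrow$ a contradiction with injectivity of $\sigma_s$ after translating the difference set off $EM$). The problem is Step 2, which you explicitly do not prove: you sketch a ``Garden-of-Eden-style counting over a F\o lner exhaustion'' and then write ``Granting this''. That is a genuine gap, and the sketched route is not obviously viable: non-injectivity of $\sigma_c$ only yields two configurations with the same image whose difference set may be infinite, and it is not clear how a Moore-type entropy count over F\o lner sets converts this into a violation of the injectivity of $\sigma_s$, which differs from $\sigma_c$ only on the finite set $E$. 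Since $c \in \Sigma(s)$, the injectivity of $\sigma_c$ is exactly what you need both for the invertibility of $\sigma_c$ and for the stable injectivity of $\sigma_s$ at the end, so the theorem is not proved as written.

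The paper closes this point with a purely finite counting argument that needs no F\o lner sets (amenability enters only through the single use of the Garden of Eden theorem in your Step 1). Fix $z \in A^G$ and a finite $N \supset E$, let $V = \{x \in A^G \colon x\vert_{G\setminus N} = z\vert_{G \setminus N}\}$ (a finite set) and $U = \sigma_c^{-1}(V)$. Surjectivity of $\sigma_c$ gives $\vert U \vert \geq \vert V\vert$; since $s$ and $c$ agree off $N$ one has $\sigma_s(U) \subset V$, and injectivity of $\sigma_s$ gives $\vert U \vert = \vert \sigma_s(U)\vert \leq \vert V \vert$. Hence $\vert U \vert = \vert V \vert$, so $\sigma_c$ restricted to the full preimage $U$ is a bijection onto $V$; applying this with $z$ ranging over all configurations forces $\sigma_c$ to be injective, and the equality $\sigma_s(U) = V$ simultaneously shows that the image of $\sigma_s$ is dense, hence equal to $A^G$ by the closed image theorem. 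This one count therefore delivers both your Step 2 and your Step 3 at once. (Incidentally, your Step 3 as written only uses surjectivity of $\sigma_c$ if you replace $\tau_c(z_\gamma)$ by an arbitrary choice of preimage of $z_\gamma$, so it could be salvaged independently of Step 2; but the injectivity of $\sigma_c$ would still be missing, and with it the conclusion of the theorem.)
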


\begin{proof}
We can suppose without loss of generality that $G$ is a finitely generated group up to restricting to the subgroup generated by the union of $M$ with the largest finite subset of $G$ over which $c$ is different from $s$. 
Since $c$ and $s$ are asymptotic, there exists $E \subset F$ finite such that $c\vert_{G \setminus E}= s\vert_{G \setminus E}$. 
\par 
First, we claim that $\sigma_c$ is surjective. 
 Indeed, suppose on the contrary that $\sigma_c$ is not surjective. Then we infer from the Garden of Eden theorem (cf.~\cite{ceccherini})  that $\sigma_c$ is not pre-injective. Consequently, we can find two distinct  asymptotic configurations $u,v \in A^G$ such that $\sigma_c(u)= \sigma_c(v)$. In particular, we can find $F \subset G$ finite such that  $u \vert_{G \setminus F} = v \vert_{G \setminus F}$. Up to replacing $u,v$ by a suitable translation, we can clearly suppose that $F \cap EM = \varnothing$. 
 \par 
It follows immediately from  the formula \eqref{e:induced-local-maps} and  \eqref{e:induced-local-maps-general} that $\sigma_s(u)= \sigma_s(v)$ and thus $\sigma_s$ is not injective.  The obtained contradiction proves  the claim that $\sigma_c$ is surjective. 
\par 
Now fix $z \in A^G$ and a finite subset $N \subset G$ containing $E$. Consider the set $V= \{x \in A^G \colon x\vert_{G \setminus N} = z\vert_{G \setminus N} \}$. Let $U= \sigma_c^{-1}(V) \subset A^G$. Since $\sigma_c$ is surjective, we have $\vert U \vert \geq \vert V \vert$. 
\par 
On the other hand, observe that $\sigma_s(U) \subset V$ as $s \vert_{G \setminus N} = c \vert_{G \setminus N}$ so that $\vert \sigma_s(U) \vert \leq \vert V \vert$. Hence, by combining with the inequality $\vert U \vert \geq \vert V \vert$  and the injectivity of $\sigma_s$, we find that
\begin{equation*} 
\label{e:amenable-group-stable-deformation}
 \vert U \vert = \vert \sigma_s(U) \vert \leq \vert V \vert \leq \vert U \vert. 
\end{equation*} 
\par
Therefore, $\vert \sigma_s(U) \vert = \vert V \vert$ and thus $\sigma_s(U)= V $ as $\sigma_s(U) \subset V$. 
\par 
Since $N$ is arbitrary, we deduce that the image $\sigma_s(A^G)$ is dense in $A^G$ with respect to the prodiscrete topology. We can thus conclude that $\sigma_s(A^G)= A^G$ since $\sigma_s(A^G)$ is closed in $A^G$ by Theorem  \ref{t:closed-image-asynchronous}.
\par 
Therefore, $\sigma_s$ is surjective and thus bijective. Since $\sigma_s$ is injective, we deduce from  Lemma~\ref{l:pseudo-equivarian}  that $\sigma_{gs}$ is also injective for every $g \in G$.   
\par 
Note also that $\vert \sigma_c^{-1}(V) \vert = \vert V \vert $ since they both equal to $ \vert U \vert=\vert \sigma_s(U) \vert$. Hence, as $z \in A^G$ is arbitrary, we deduce that $\sigma_c$ is injective and thus bijective. This proves the first part of the conclusion of the theorem.  
\par 
As a consequence, $\sigma_s$ is stably injective since we know by Lemma~\ref{l:asym-constant-s}   that $\Sigma(s)=\{gs \colon g \in G\} \cup \{c\}$. Therefore, we infer from Theorem~\ref{t:reversible-finite-alpha}  
that $\sigma_c$ and  $\sigma_s$ are reversible and thus invertible as they are surjective. Therefore,  there exist  $R \subset G$ finite and $t,d \in T^G$ where $T=A^{A^R}$ such that $\sigma_s^{-1}=\sigma_t$ and $\sigma_c^{-1}=\sigma_d$. Note that $d$ is constant since $c$ is constant. 
\par
The proof of the theorem is thus complete. 
\end{proof}

\section{Generalization to ANUCA of bounded singularity} 
\label{s:bounded-singularity-section}
When the universe is a free abelian  group, we can establish the following invertibility result of the large class of stably injective \emph{ANUCA of bounded singularity} that we describe below. 
\par 
Given $g, h \in \Z^d$ and a box $K= \prod_{j=1}^d\llbracket a_j,b_j \rrbracket^d \in \Z^d$ where $\llbracket a_j,b_j \rrbracket = \{a_j,  \dots, b_j\}$, we say that that 
$g \equiv h$ (mod $K$) if 
$g_j \equiv h_j$ (mod $a_j-b_j+1$) for every $j=1, \dots, d$ where $g=(g_1, \dots, g_d)$ and $h = (h_1, \dots, h_d)$. 
 \par 
Given subsets $M,K$ of a group $G$. We can define the \emph{$M$-interior}, the \emph{$M$-exterior}, and the \emph{$M$-boundary} of $K$ respectively by 
\begin{align*}
\partial_M^-K & \coloneqq \{ g \in K \colon gM \subset K \},\\ 
\partial_M^+K & \coloneqq KM \setminus K, \\
\partial_M K 
& 
\coloneqq 
\partial_M^+K \cup (K \setminus \partial_M^-K). 
\end{align*}
 
\begin{definition}
\label{d:bounded-singularity}
Let 
$ M \subset \Z^d$ ($d\in \N$)
be finite. Let $A$ be a finite set and let $S=A^{A^M}$. Given $s \in S^{\Z^d}$, we say that     $\sigma_s$ has \emph{bounded singularity} if for all finite subset $E \subset \Z^d$, there exists a box $K \subset \Z^d$ containing $E$ such that  
$s(g) = s(k_g)$ for all $g \in \partial_E^+K$ and the unique $k_g \in K$ with  $k_g \equiv g$ (mod $K$).  
\end{definition}
\par 
For example, 
let $M = \llbracket -r, r \rrbracket^2$ and let $p \geq 2r +1$. Let $A$ be a finite set and let $S= A^{A^M}$. Then $\sigma_s$ has bounded singularity for every $s \in S^{\Z^2}$ which is constant on $\Z^2 \setminus p\Z^2M$. 
The following example is more general.  
\begin{example}
For every $r \geq 0$, let $M_r = \llbracket -r, r \rrbracket^2$. Let $(K_n)_{n \geq 0}$ be a nested sequence of boxes such that $\Z^2 = \cup_{n \geq 0} K_n$. Let $A$ be a finite set and let $S= A^{A^{M_{r_0}}}$. Then the ANUCA  $\sigma_s$ has bounded singularity for every $s \in S^{\Z^2}$ such that $s$ is constant on each of $\partial_{M_n} K_n$. 
\end{example}

\par 
We can now prove the main result Theorem~\ref{t:intro-singularity} in the Introduction   whose proof is similar to the proof of Theorem~\ref{c:application-disturbed-ca}.

\begin{proof}[Proof of Theorem~\ref{t:intro-singularity}] 
Since $\sigma_s$ is stably  injective, we infer from  Theorem~\ref{t:reversible-finite-alpha}  that $\sigma_s$ is reversible. Hence, up to enlarging $M$ without loss of generality, we can find $t \in S^G$ such that $\sigma_t \circ \sigma_s = \Id$. 
\par 
Let $\Gamma = \sigma_s(A^{\Z^d})$ and  suppose on the contrary that $\sigma_s$ is not invertible. In particular, $\sigma_s$ is not surjective and we infer   Theorem~\ref{t:closed-image-asynchronous} that there exists a finite subset $E \subset \Z^d$ such that $M \subset E$ and $\Gamma_{E} = f^+_{E, s\vert_E} (A^{E M}) \subsetneq A^{E}$.
\par 
Since $\sigma_s$ has bounded singularity, we can find a box $K \subset \Z^d$ which contains $E$ and such that  
$s(g) = s(k_g)$ for all $g \in \partial_E^+K$ and the unique $k_g \in K$ with  $k_g \equiv g$ (mod $K$).  
\par 
The configurations $s,t$ induce
the maps $\Psi_{K,s}$ and  $\Psi_{K,t} \colon A^K \to A^K$ defined as in 
Section~\ref{s:induced-local-map-2}.  Every $x \in A^K$ defines $\tilde{x} \in A^{\Z^d}$ by $\tilde{x}(g)=x(k_g)$ for all $g \in \Z^d$ and the unique $k_g \in K$ such that $k_g \equiv g$ (mod $K$). 
Then we have: 
\begin{align}
    \label{e:local-periodic-proof-1-a}
\Psi_{K,s}(x) \coloneqq \sigma_s(\tilde{x})\vert_K, \quad  
\Psi_{K,t}(x) \coloneqq \sigma_t(\tilde{x})\vert_K.
\end{align}    
\par 
Since  $\sigma_t \circ \sigma_s = \Id$ and  $s(g)=s(k_g)$ for all $g \in KM \setminus K$ (as $M \subset E$), Lemma~\ref{l:induced-local-map-reversible-1} implies that $\Psi_{K,t}\circ \Psi_{K,s} = \Id$. It follows that $\Psi_{K,t}$ and $\Psi_{K,s}$ are bijective since   $A^K$ is finite. As $E \subset K$, we deduce that 
\[
\Gamma_E = \sigma_s(A^{\Z^d})\vert_{E} \supset \{ \sigma_s(\tilde{x})\colon x \in A^K\}\vert_E = (\im \Psi_{K,s})\vert_E = (A^K)\vert_E = A^{E}, 
\]
which contradicts the choice of $E$. We conclude that $\sigma_s$ is surjective and thus invertible since $\sigma_t \circ \sigma_s = \Id$. The proof of the theorem is complete. 
\end{proof}
\par 
Using Lemma~\ref{l:separation-forte-res} and Lemma~\ref{l:induced-local-map-reversible-1}, we see that  Definition~\ref{d:bounded-singularity} and Theorem~\ref{t:intro-singularity} can be easily generalized, \textit{mutatis mutandis}, to finitely generated group universes.

\section{Stable reversibility and direct finiteness of ANUCA} 
\label{s:direct-finintess}

\begin{theorem}
\label{t:direct-post-injective}
Let $M$  be a finite subset of a countable group $G$. Let $A$ be a finite set and let $S=A^{A^M}$. Let $s, t \in S^G$ and suppose that $\sigma_t\circ \sigma_s= \Id $. Then $\sigma_s$ is stably injective. Moreover, for every $p \in \Sigma(s)$, there exists $q \in \Sigma(t)$ such that $\sigma_q \circ \sigma_p= \Id$. In particular, $\sigma_s$ is stably reversible. 
\end{theorem}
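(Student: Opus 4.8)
The plan is to exploit the almost-equivariance formula \eqref{e:soft-equivariant} together with a compactness argument in the (finite) space $S^G$, running a limiting argument along translates $g_n s \to p$ while tracking the corresponding translates $g_n t$. First I would dispose of stable injectivity directly: given $p \in \Sigma(s)$, pick a sequence $(g_n)$ in $G$ with $g_n s \to p$. By Lemma~\ref{l:inverse-asynchronous-ca} (applied to $\sigma_t \circ \sigma_s = \Id$), one has $\sigma_{g_n t} \circ \sigma_{g_n s} = \Id$ for every $n$, so each $\sigma_{g_n s}$ is injective; more usefully, for the limit I would use $T^G$ compact (as $A,M$ finite force $T = A^{A^M}$ finite, after possibly enlarging to a common memory via Theorem~\ref{t:monoid-asynchronous}) to pass, along a subsequence, to $g_{n_k} t \to q \in \Sigma(t)$. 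The goal is then $\sigma_q \circ \sigma_p = \Id$, which simultaneously gives injectivity of $\sigma_p$ and the "moreover" clause; stable reversibility follows since $q$ lies in $\Sigma(t)$ and $\Sigma(t)$ is a single fixed witness set independent of $p$.

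The core computation is to show $\sigma_q(\sigma_p(x)) = x$ for arbitrary $x \in A^G$. Fix a finite $F \subset G$. Since $g_{n_k} s \to p$ and $g_{n_k} t \to q$, for $k$ large one has $p\vert_{F M'} = (g_{n_k}s)\vert_{F M'}$ and $q\vert_F = (g_{n_k} t)\vert_F$, where $M'$ absorbs both memories so that $\sigma_q(\sigma_p(x))\vert_F$ depends only on $p\vert_{F M'}$, $q\vert_F$, and $x\vert_{F M' M}$ through the induced local maps $f^+$ of Section~\ref{s:induced-local-map}. Using \eqref{e:induced-local-maps-proof} twice, replace $p$ by $g_{n_k}s$ and $q$ by $g_{n_k}t$ on $F$ to get
\[
\sigma_q(\sigma_p(x))\vert_F = \sigma_{g_{n_k}t}\bigl(\sigma_{g_{n_k}s}(x)\bigr)\vert_F = x\vert_F,
\]
the last equality by Lemma~\ref{l:inverse-asynchronous-ca}. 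Since $F$ is arbitrary, $\sigma_q \circ \sigma_p = \Id$. This is essentially the argument already used in the proof of Theorem~\ref{t:reversible-finite-alpha-beta}, and indeed the present theorem can be seen as its converse direction packaged together with it.

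The one point that needs care — and the main (mild) obstacle — is the bookkeeping of memory sets: $\sigma_s$ and $\sigma_t$ have memory $M$, but their composition $\sigma_t \circ \sigma_s$ has memory $M^2$ per Theorem~\ref{t:monoid-asynchronous}, so the local-map identities must be stated with respect to the correct enlarged windows, and one must check that the induced local map of $g_{n_k}t$ on $F$ agrees with that of $q$ once the configurations agree on $FM$ (not merely on $F$). Once the finite-window dependence is set up correctly, everything is an application of continuity (Lemma~\ref{l:continuity}, or directly \eqref{e:induced-local-maps-proof}) and compactness of $T^G$; no genuinely new idea beyond those in Theorem~\ref{t:reversible-finite-alpha-beta} is required. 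Finally, stable injectivity of $\sigma_s$ is immediate: for each $p \in \Sigma(s)$ we have produced $q$ with $\sigma_q \circ \sigma_p = \Id$, so $\sigma_p$ is injective; and $s \in \Sigma(s)$ together with Lemma~\ref{l:post-injective-stable} confirms the statement is consistent with the definition.
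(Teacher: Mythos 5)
Your proposal is correct and follows essentially the same route as the paper: translate by $g_n$ with $g_ns\to p$, use Lemma~\ref{l:inverse-asynchronous-ca} to get $\sigma_{g_nt}\circ\sigma_{g_ns}=\Id$, extract a convergent subsequence $g_{n_k}t\to q\in\Sigma(t)$ by compactness of $S^G$, and verify $\sigma_q\circ\sigma_p=\Id$ on each finite window via the induced local maps $f^+$ with the $EM$ versus $E$ bookkeeping you flag. The only superfluous step is invoking Theorem~\ref{t:monoid-asynchronous} to reconcile memories, since $s$ and $t$ already share the memory set $M$ by hypothesis.
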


\begin{proof}
Let $p \in \Sigma(s)$ then there exists a sequence $(g_n)_{n \in \N}$ of elements of $G$ such that $\lim_{n \to \infty} g_n s =p$. Since $M$ and $A$ are finite, $S$ is also finite and it follows that $S^G$ is compact by Tychonoff's theorem. Hence, up to passing to a subsequence, we can suppose without loss of generality that 
$\lim_{n \to \infty} g_n t=q$ for some $q \in S^G$. 
\par 
We claim that $\sigma_q \circ \sigma_p= \Id$. Indeed, let $x \in A^G$ and let $E \subset G$ be a finite subset. 
Since $\lim_{n \to \infty} g_n s =p$ and $\lim_{n \to \infty} g_n t=q$, we can find $n_0 \in \N$ large enough so that for all $n \geq n_0$, we have $(g_ns)\vert_{EM}= p \vert_{EM}$ and 
$(g_nt)\vert_E= q \vert_E$. 
\par 
Note that $\sigma_{g_nt}\circ \sigma_{g_ns}=\Id$ by Lemma~\ref{l:inverse-asynchronous-ca} as $\sigma_t \circ \sigma_s=\Id$ by hypothesis. Consequently, we infer from the formula \eqref{e:induced-local-maps} and  \eqref{e:induced-local-maps-general} that: 
\begin{align*} 
    \sigma_q( \sigma_p(x)) \vert_E  & =
    f_{E, q\vert_{E}}^+
    (f_{EM, p\vert_{EM}}^+ (x\vert_{EM^2})) \\
    & = f_{E, (g_nt)\vert_{E}}^+
    (f_{EN, (g_ns)\vert_{EM}}^+ (x\vert_{EM^2})) \\ 
    & =  \sigma_{g_n q}(\sigma_{g_ns}(x))\vert_E \\ 
    & = x\vert_E.  
\end{align*} 
\par 
Since $E$ is arbitrary, we deduce that $\sigma_q(\sigma_p(x))= x$ for all $x \in A^G$ and the claim is proved. Since clearly $q \in \Sigma(t)$, the last statement of the theorem is proved. In particular, we find that $\sigma_s$ is stably injective by definition and the proof is therefore complete. 
\end{proof}

\par 
Combining  Theorem~\ref{t:direct-post-injective} with Theorem~\ref{t:reversible-finite-alpha}, we can now give the proof of Theorem~\ref{t:intro-characterization-reversible-ANUCA} in the Introduction which gives various characterizations of the reversibility of ANUCA.  

\begin{proof}[Proof of Theorem~\ref{t:intro-characterization-reversible-ANUCA}] 
It is clear from the definition of stable reversibility that (ii)$\implies$(i).   Theorem~\ref{t:direct-post-injective} tells us that (i)$\implies$(iii). Finally, the implication (iii)$\implies$(ii) follows from Theorem~\ref{t:reversible-finite-alpha}. 
\end{proof}

\section{Pointwise uniform post-surjectivity}  \label{s:pointwise-uniform-post}

\par 
Given a map $\tau \colon A^G \to A^G$ where $G$ is a group and  $A$ is a set. Then $\tau$ is   \emph{pre-injective} if $\tau(x) = \tau(y)$ implies $x= y$ whenever $x, y \in A^G$ are asymptotic, and $\tau$ is \emph{post-surjective} if for all $x, y \in A^G$ with $y$ asymptotic to $\tau (x)$, we can find $z \in A^G$ asymptotic to $x$ such that  $ \tau(z)=y$. We shall see in Corollary~\ref{l:post-surj-continuous-surj} that every post-surjective ANUCA is automatically surjective. 
\par 
The dual-surjunctivity  version of Gottschalk's conjecture was introduced recently by 
Capobianco, Kari, and Taati in \cite{kari-post-surjective} and states that if $G$ is a group and $A$ is a finite set, then every  post-surjective CA must be  pre-injective. Moreover, the authors settled in the same paper \cite{kari-post-surjective} the dual-surjunctivity conjecture for CA over sofic universes. 
 \par 

We establish the following pointwise uniform  post-surjectivity of ANUCA. 

\begin{lemma} 
\label{l:uniform-psot-surjectivity}
Let $M$  be a finite subset of a  countable group $G$. Let $A$ be a finite set and let $S=A^{A^M}$. Suppose that $\sigma_s$ is post-surjective for some $s \in S^G$. Then for each $g \in G$, there exists a finite subset $E\subset G$ such that for all $x, y\in A^G$ with  $y\vert_{G \setminus \{g\}} =\sigma_s(x)\vert_{G \setminus \{g\}}$, there exists $z \in A^G$ such that $\sigma_s(z)=y$ and 
$z\vert_{G \setminus gE}= x\vert_{G \setminus gE}$. 
\end{lemma}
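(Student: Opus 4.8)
The plan is to deduce the pointwise‑uniform post‑surjectivity from the ordinary post‑surjectivity by a compactness argument on the configuration space. By Lemma~\ref{l:pseudo-equivarian} it suffices to treat the case $g = 1_G$, since $\sigma_{gs}$ is post‑surjective and the translation $x \mapsto gx$ carries the required statement for $1_G$ into the one for $g$. So fix $g = 1_G$ and write $e$ for $1_G$. First I would fix an exhaustion $(E_n)_{n \in \N}$ of $G$ by finite subsets with $e \in E_0$, and suppose for contradiction that for every $n$ there are $x_n, y_n \in A^G$ with $y_n\vert_{G \setminus \{e\}} = \sigma_s(x_n)\vert_{G \setminus \{e\}}$ such that \emph{no} $z$ with $\sigma_s(z) = y_n$ agrees with $x_n$ outside $E_n$. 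Equivalently: for every $z \in A^G$ with $\sigma_s(z) = y_n$ there exists $h \in G \setminus E_n$ with $z(h) \neq x_n(h)$.

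Next I would pass to a convergent subsequence. Since $A$ is finite, $A^G \times A^G$ is compact in the prodiscrete topology, so after relabelling we may assume $x_n \to x$ and $y_n \to y$ in $A^G$. By continuity of $\sigma_s$ (Lemma~\ref{l:asyn-ca-continuous}) and the convergence, the relation $y_n\vert_{G \setminus \{e\}} = \sigma_s(x_n)\vert_{G \setminus \{e\}}$ passes to the limit: $y\vert_{G \setminus \{e\}} = \sigma_s(x)\vert_{G \setminus \{e\}}$, i.e.\ $y$ is asymptotic to $\sigma_s(x)$ (differing at most at $e$). Post‑surjectivity of $\sigma_s$ then furnishes $z \in A^G$ asymptotic to $x$ with $\sigma_s(z) = y$; say $z\vert_{G \setminus F} = x\vert_{G \setminus F}$ for some finite $F \subset G$, and pick $N$ with $F \subset E_N$.

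The core of the argument is then a finitary approximation: I want to promote this single limit solution $z$ to solutions $z_n$ for the individual $y_n$ that are controlled outside a \emph{fixed} finite set, contradicting the assumption. The device is to use post‑surjectivity on a slightly perturbed target. Concretely, for large $n$ the configurations $x_n, y_n$ agree with $x, y$ on a large ball, so one builds a configuration $y_n'$ that equals $y_n$ on that ball and equals $y$ far away (hence is asymptotic to $\sigma_s(\hat x_n)$ for the hybrid $\hat x_n$ that equals $x_n$ on the ball and $z$ — equivalently $x$ — outside it); applying post‑surjectivity to $y_n'$ and $\hat x_n$ produces $z_n'$ asymptotic to $\hat x_n$, and one checks $z_n'$ solves $\sigma_s(z_n) = y_n$ after correcting on a bounded region, with $z_n$ agreeing with $x_n$ off a finite set that can be taken independent of $n$ once $n$ is large. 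I expect the main obstacle to be exactly this bookkeeping step: making the "bounded region of correction" genuinely uniform in $n$. The clean way around it is to argue more directly — take $E$ to be $E_N$ (the set where the limit solution $z$ differs from $x$, enlarged by $M$ to absorb the local‑map dependence), and show that for $n$ large the element
\[
  z_n := \begin{cases} z(h), & h \in E_N,\\ x_n(h), & h \notin E_N,\end{cases}
\]
satisfies $\sigma_s(z_n) = y_n$ off a controlled set and can be patched to an exact solution supported‑near $E_N$; since $z_n$ agrees with $x_n$ outside $E_N$ by construction, this contradicts the choice of $x_n, y_n$ for all $n \geq N$. Verifying $\sigma_s(z_n) = y_n$ reduces, via \eqref{e:induced-local-maps-general}, to checking agreement cell by cell: on $\partial_M^- E_N$ the output depends only on the $E_N$‑part, where $z_n = z$ and $\sigma_s(z) = y = \lim y_n$; far from $E_N$ the output depends only on the $x_n$‑part and equals $\sigma_s(x_n) = y_n$ off $\{e\}$; and the finitely many intermediate cells lie in a fixed finite set, which is where the remaining patching (again by post‑surjectivity, now with target differing from $\sigma_s(z_n)$ only on that fixed finite set) does the work. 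This last patching is the delicate point, but it is finite and $n$‑independent, which is all that is needed.
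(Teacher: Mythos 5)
Your overall strategy (contradiction against an exhaustion $(E_n)$, compactness to extract limits $x_n\to x$, $y_n\to y$, passing the relation to the limit, and invoking post-surjectivity at the limit to get a solution $w$ with $w\vert_{G\setminus E_k}=x\vert_{G\setminus E_k}$) is exactly the paper's. The gap is in the final gluing. You define $z_n$ to equal the limit solution on $E_N$ and $x_n$ outside $E_N$, observe that $\sigma_s(z_n)$ matches $y_n$ except on the annulus of cells $g$ with $gM$ meeting both $E_N$ and its complement, and then propose to repair those finitely many cells by ``another application of post-surjectivity.'' That repair does not close the argument: post-surjectivity only yields a corrected configuration \emph{asymptotic} to $z_n$, i.e.\ differing from it on \emph{some} finite set, with no bound on where that set lies. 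The location of the correction could grow with $n$, which is precisely the uniformity the lemma is trying to establish --- so this step is circular. The fact that the set of cells to be corrected is finite and $n$-independent does not imply the correction itself is supported on an $n$-independent set.

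The paper avoids any patching by choosing the hybrid so that there are no bad boundary cells at all. After normalizing $1_G\in M=M^{-1}$ and taking $E_nM\subset E_{n+1}$, it picks $m>k$ so large that $x_m\vert_{E_kM^2}=x\vert_{E_kM^2}$ and $y_m\vert_{E_kM^2}=y\vert_{E_kM^2}$. Since $w=x$ off $E_k$, this forces $w=x_m$ on the annulus $E_kM^2\setminus E_k$, so the configuration $z$ defined by $z\vert_{E_kM^2}=w\vert_{E_kM^2}$ and $z\vert_{G\setminus E_k}=x_m\vert_{G\setminus E_k}$ is consistent. Then every cell is clean: if $g\in E_kM$ then $gM\subset E_kM^2$ and $\sigma_s(z)(g)=\sigma_s(w)(g)=y(g)=y_m(g)$, while if $g\notin E_kM$ then $gM\cap E_k=\varnothing$ (by symmetry of $M$) and $\sigma_s(z)(g)=\sigma_s(x_m)(g)=y_m(g)$, using $1_G\in E_kM$. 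Hence $\sigma_s(z)=y_m$ exactly, and $z=x_m$ off $E_k\subset E_m$ gives the contradiction. To fix your write-up, replace the cut along $\partial E_N$ by this overlap gluing on $E_kM^2\setminus E_k$, which is where the hypothesis ``$x_m$ close to $x$ on a set enlarged by $M^2$'' actually gets used.
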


\begin{proof} 
To simplify the notation,  we will only treat the case when $g=1_G$ since  the general case is similar. 
\par 
Without loss of generality, we can clearly suppose that $1_G\in M$ and $M$ is symmetric, i.e., $M=M^{-1}$. 
\par 
Since $G$ is countable by hypothesis, we can find an increasing sequence $(E_n)_{n \in \N}$ of finite subsets of $G$ such that $G= \cup_{n \in \N}E_n$,  $1_G \in E_0$, and moreover for every $n \in \N$: 
\[
E_n M \subset E_{n+1}. 
\] 
\par 
We suppose on the contrary that there does not exist a  finite subset $E \subset G$ with the property described in the conclusion of the lemma. It follow that there exists for every $n \in \N$ two  configurations $x_n, y_n \in A^G$ with $y_n\vert_{G \setminus \{1_G\}} =\sigma_s(x_n)\vert_{G \setminus \{1_G\}}$ but for all $z \in A^G$ satisfying $z\vert_{G \setminus E_n}= x_n\vert_{G \setminus E_n}$, one has  $\sigma_s(z) \neq y_n$.
\par 
Note that we have 
$G \setminus E_{m} \subset G \setminus E_n$ for all $m \geq n \geq 0$. Hence, by the choice of $x_n$ and $y_n$, we find that for every $m \geq n \geq 0$, we have $\sigma_s(z) \neq y_m$ for every $z \in A^G$ with $z\vert_{G \setminus E_n}= x_m\vert_{G \setminus E_n}$.  
\par 
On the other hand, the space $A^G$ is compact by Tychonoff's theorem. Therefore, we can, up to passing to a subsequence, suppose without loss of generality that 
$\lim_{n \to \infty} x_n=x$ and $\lim_{n \to \infty}y_n=y$ for some $x,y \in A^G$.   
\par 
By passing the relation $y_n\vert_{G \setminus \{1_G\}} =\sigma_s(x_n)\vert_{G \setminus \{1_G\}}$ to the limit when $n$ goes to $\infty$, we deduce that 
\[ 
y\vert_{G \setminus \{1_G\}} =\sigma_s(x)\vert_{G \setminus \{1_G\}}  
\] 
since $\sigma_s$ is continuous by Lemma~\ref{l:continuity-x}. Hence, $y$ is asymptotic to $\sigma_s(x)$.
\par It follows from the post-surjectivity of $\sigma_s$ that there exists $w \in A^G$ such that $w$ is asymptotic to $x$ and $\sigma_s(w)= y$. 
In particular, we can find $k \in \N$ such that $w \vert_{G \setminus E_k}= x \vert_{G \setminus E_k}$. 
\par 
As $\lim_{n \to \infty} x_n=x$ and $\lim_{n \to \infty} y_n=y$, we can choose $m > k$ such that $x_m\vert_{E_{k}M^2}= x \vert_{E_{k}M^2}$ and $y_m\vert_{E_{k}M^2}= y \vert_{E_{k}M^2}$. 
\par 
Consequently, we have $w\vert_{E_{k}M^2 \setminus E_k} =x_m\vert_{E_{k}M^2 \setminus E_k}$. Therefore, we obtain a well-defined  configuration $z\in A^G$ by setting 
\begin{align}
\label{e:post-surjectivity-1-2-3}
z \vert_{E_{k}M^2}= w\vert_{E_{k}M^2}, \quad \quad 
z \vert_{G \setminus E_{k}}=x_m \vert_{G \setminus E_{k}}.
\end{align} 
\par 
Since $M$ is a symmetric memory set of $\sigma_s$ and $1_G \in E_kM$, we infer the formula  \eqref{e:induced-local-maps}-\eqref{e:induced-local-maps-general}, and the choice of $x_m$, $y_m$ that: 
\begin{align*}
    \sigma_s(z)\vert_{G \setminus E_kM} & =  \sigma_s(x_m)\vert_{G \setminus E_kM}   \\
    &  = y_m \vert_{G \setminus E_k M}. 
\end{align*}
\par 
Similarly, we deduce from the relations \eqref{e:induced-local-maps}-\eqref{e:induced-local-maps-general},  $y_m\vert_{E_{k}M^2}= y \vert_{E_{k}M^2}$, and $ \sigma_s(w)=y$ that: 
\begin{align*}
    \sigma_s(z)\vert_{ E_kM} & =  \sigma_s(w)\vert_{ E_kM}   \\
    &  = y\vert_{ E_kM}  \\
    & = y_m \vert_{ E_kM}. 
\end{align*}
\par 
Hence, we can   conclude that $\sigma_s(z)=y_m$. However, $z \vert_{G \setminus E_m} = x_m \vert_{G \setminus E_m}$ by the relation  \eqref{e:post-surjectivity-1-2-3} as $E_k \subset E_m$. Therefore, we obtain a contradiction to the choice of $x_m$, $y_m$. The proof is thus complete. 
\end{proof}

\par 
As a consequence of Lemma~\ref{l:uniform-psot-surjectivity} in the case of CA, we obtain the following result proved in   \cite[Lemma~1]{kari-post-surjective}. 

\begin{corollary}
Let $G$ be a countable group and let $A$ be a finite set. Suppose that $\tau \colon A^G \to A^G$ is a post-surjective CA. Then there exists a finite subset $E\subset G$ such that for all $x, y\in A^G$ with  $y\vert_{G \setminus \{1_G\}} =\sigma_s(x)\vert_{G \setminus \{1_G\}}$, there exists $z \in A^G$ such that $\sigma_s(z)=y$ and 
$z\vert_{G \setminus E}= x\vert_{G \setminus E}$. 
\end{corollary}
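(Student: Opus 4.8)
The plan is to recognize that this corollary is simply the specialization of Lemma~\ref{l:uniform-psot-surjectivity} to the case of a constant configuration of local defining maps, with the distinguished group element taken to be $1_G$. So there is essentially nothing to prove beyond unwinding definitions.

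First I would recall, as noted immediately after Definition~\ref{d:most-general-def-asyn-ca}, that every CA is an ANUCA with finite memory and constant configuration of local defining maps: if $\tau \colon A^G \to A^G$ is a CA with finite memory set $M$ and local defining map $\mu \colon A^M \to A$, and we set $S = A^{A^M}$ and let $s \in S^G$ be the constant configuration $s(g) = \mu$ for every $g \in G$, then $\tau = \sigma_s$. Since $\tau$ is post-surjective by hypothesis, $\sigma_s$ is post-surjective, and since $G$ is countable and $A$ is finite, all the hypotheses of Lemma~\ref{l:uniform-psot-surjectivity} are met.

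Then I would apply Lemma~\ref{l:uniform-psot-surjectivity} with $g = 1_G$. It yields a finite subset $E \subset G$ such that for all $x, y \in A^G$ with $y\vert_{G \setminus \{1_G\}} = \sigma_s(x)\vert_{G \setminus \{1_G\}}$ there is $z \in A^G$ with $\sigma_s(z) = y$ and $z\vert_{G \setminus 1_G E} = x\vert_{G \setminus 1_G E}$. Since $1_G E = E$, this is precisely the claimed statement, completing the proof. I do not expect any obstacle here: the substantive content — the compactness argument and the construction of the patched configuration $z$ — has already been carried out in Lemma~\ref{l:uniform-psot-surjectivity}, and the only thing one must verify is the harmless observation that a CA is an ANUCA with finite memory, so that the lemma applies verbatim and recovers \cite[Lemma~1]{kari-post-surjective}.
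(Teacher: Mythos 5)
Your proposal is correct and matches the paper's own (very short) proof: the paper also derives the corollary directly from Lemma~\ref{l:uniform-psot-surjectivity} after viewing the CA $\tau$ as an ANUCA $\sigma_s$ with constant configuration $s$, and specializing to $g=1_G$ handles the statement exactly as written. The only cosmetic difference is that the paper additionally invokes the $G$-equivariance of $\tau$, which would be needed to get a single $E$ uniform over all base points $g$ (as in the cited result of Capobianco--Kari--Taati), but is not required for the statement restricted to $1_G$.
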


\begin{proof}
It is a direct consequence of Lemma~\ref{l:uniform-psot-surjectivity} and the fact that $\tau$ is a $G$-equivariant CA. 
\end{proof}
\par 
Using Lemma~\ref{l:uniform-psot-surjectivity} instead of \cite[Corollary~2]{kari-post-surjective}, we see easily that the exact same proof, \textit{mutatis mutandis}, of \cite{kari-post-surjective} shows that 
all pre-injective  post-surjective ANUCA with finite memory are  invertible. 

\begin{theorem}
\label{t:invertible-general}
Let $G$ be a countable group and let $A$ be a finite set. Suppose that $\tau \colon A^G \to A^G$ is a post-surjective  pre-injective ANUCA with finite memory. Then $\tau$ is invertible.  \qed
\end{theorem}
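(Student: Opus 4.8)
The plan is to follow the blueprint of \cite{kari-post-surjective} for classical CA and carry it over to the ANUCA setting, using Lemma~\ref{l:uniform-psot-surjectivity} in place of the $G$-equivariant uniform post-surjectivity statement that is available for CA. First I would observe that by Corollary~\ref{l:post-surj-continuous-surj} a post-surjective ANUCA $\tau=\sigma_s$ with finite memory is automatically surjective, so $\tau$ is a bijection once we know it is injective; and pre-injectivity together with surjectivity forces injectivity by a counting argument on finite patterns combined with the closed image property (Theorem~\ref{t:closed-image-asynchronous}), exactly as in the CA case. Concretely: if $\tau(x)=\tau(y)$ with $x\neq y$, pick $g$ where they differ; by Lemma~\ref{l:uniform-psot-surjectivity} the preimage of a configuration perturbed only at $g$ can be taken to differ from a given preimage only on a fixed finite set $gE$, and iterating this over a sequence of single-site perturbations one builds two distinct asymptotic configurations with the same image, contradicting pre-injectivity. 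This shows $\tau$ is injective, hence bijective.

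The remaining and more substantial point is to show that $\tau^{-1}$ is itself an ANUCA with \emph{finite} memory, i.e. that $\tau^{-1}(y)(g)$ depends only on $y$ restricted to a bounded neighbourhood $gN$ of $g$, with $N$ finite and independent of $g$. Here I would argue by contradiction and compactness, mirroring the reversibility argument already used in the proof of Theorem~\ref{t:reversible-finite-alpha}: if no such finite $N$ existed, one would obtain, for an exhaustion $(E_n)$ of $G$, group elements $g_n$ and configurations $u_n,v_n$ with $u_n(g_n)\neq v_n(g_n)$ but $\tau(u_n)$ and $\tau(v_n)$ agreeing on $g_nE_n$. Translating by $g_n^{-1}$ and using the almost-equivariance \eqref{e:soft-equivariant} together with compactness of $S^G$ (finite $S$) to pass to a limit configuration $p\in\Sigma(s)$, and compactness of $A^G$ to extract limits $x\neq y$ with $x(1_G)\neq y(1_G)$, one gets $\sigma_p(x)=\sigma_p(y)$. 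To derive a contradiction one uses that post-surjectivity is inherited by every $\sigma_p$ with $p\in\Sigma(s)$ (Lemma~\ref{l:pseudo-equivarian}), hence each such $\sigma_p$ is again bijective with pre-injective inverse by the first part; but then the uniform post-surjectivity of Lemma~\ref{l:uniform-psot-surjectivity} applied uniformly along $\Sigma(s)$ — which is the content one extracts from \cite[Corollary~2]{kari-post-surjective} replaced by our lemma — pins down the radius of the inverse, contradicting the unboundedness. Once a finite $N$ is found, one defines the configuration $q\in(A^{A^N})^G$ of local rules for $\tau^{-1}$ exactly as at the end of the proof of Theorem~\ref{t:reversible-finite-alpha} (using that $\tau$ is now surjective, so no $a_0$-padding is needed), and concludes $\tau^{-1}=\sigma_q$ is an ANUCA with finite memory; together with bijectivity this is precisely invertibility.

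The main obstacle I expect is the uniformity of the post-surjectivity radius over the whole orbit closure $\Sigma(s)$ rather than at a single configuration. Lemma~\ref{l:uniform-psot-surjectivity} gives, for a fixed post-surjective $\sigma_s$ and a fixed site $g$, a finite $E$; to run the compactness contradiction above one needs the analogue of \cite[Corollary~2]{kari-post-surjective}, namely a single finite $E$ that works simultaneously for all $\sigma_p$, $p\in\Sigma(s)$. This is obtained by another compactness argument: if no uniform $E$ existed, one would find $p_n\in\Sigma(s)$ and witnesses requiring radius $>n$, then pass to a limit $p\in\Sigma(s)$ (using $\Sigma(p)\subset\Sigma(s)$ from Lemma~\ref{l:post-injective-stable} and finiteness of $S$) for which Lemma~\ref{l:uniform-psot-surjectivity} fails — contradiction. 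Since the excerpt explicitly asserts that ``the exact same proof, \textit{mutatis mutandis}, of \cite{kari-post-surjective}'' works with Lemma~\ref{l:uniform-psot-surjectivity} substituted in, the proof proper can be kept short: \emph{the statement follows verbatim from the argument of \cite{kari-post-surjective}, using Corollary~\ref{l:post-surj-continuous-surj}, Lemma~\ref{l:pseudo-equivarian}, Lemma~\ref{l:post-injective-stable}, Theorem~\ref{t:closed-image-asynchronous} and Lemma~\ref{l:uniform-psot-surjectivity} in place of the corresponding facts about CA}, with the single-site perturbation, compactness and inverse-radius steps organized as above.
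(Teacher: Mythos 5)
Your overall strategy --- running the Capobianco--Kari--Taati argument of \cite{kari-post-surjective} with Lemma~\ref{l:uniform-psot-surjectivity} substituted for their equivariant uniform post-surjectivity --- is exactly what the paper does; its proof of Theorem~\ref{t:invertible-general} is precisely this citation, together with Corollary~\ref{l:post-surj-continuous-surj}. Your concrete mechanism for injectivity (single-site perturbations of the image traced back to a controlled perturbation of the preimage, contradicting pre-injectivity) is also the right one; note however that your framing of this step as ``pre-injectivity together with surjectivity forces injectivity by a counting argument'' is not a correct general principle --- the argument genuinely needs post-surjectivity, as your own ``Concretely'' paragraph then uses.

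There is, however, a genuine gap in your treatment of the finite memory of the inverse. You pass to a limit point $p\in\Sigma(s)$ and assert that ``post-surjectivity is inherited by every $\sigma_p$ with $p\in\Sigma(s)$ (Lemma~\ref{l:pseudo-equivarian})'', in order to conclude that each $\sigma_p$ is again bijective and to pin down a uniform inverse radius. Lemma~\ref{l:pseudo-equivarian} gives inheritance only for the translates $gs$, $g\in G$, i.e.\ for the \emph{orbit} of $s$, not for its closure $\Sigma(s)$; inheritance by limit points is exactly the strictly stronger notion of \emph{stable} post-surjectivity introduced in Section~\ref{s:stably-post0surjective}, and it does not follow from post-surjectivity of $\sigma_s$ alone (compare Example~\ref{ex:1}, where injectivity of $\sigma_s$ fails to pass to a limit point of the orbit). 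The same issue undercuts your closing compactness argument for a radius uniform over $\Sigma(s)$: applying Lemma~\ref{l:uniform-psot-surjectivity} to $\sigma_p$ presupposes that $\sigma_p$ is post-surjective, which is the unproved point; this uniformization is carried out in Lemma~\ref{l:uniform-post-surjectivity-all} and Theorem~\ref{t:stably-uniform-post-surj-aysn-main} only under the stable post-surjectivity hypothesis, which Theorem~\ref{t:invertible-general} does not assume. The intended argument stays at the level of $\sigma_s$ itself, in the manner of \cite{kari-post-surjective}: pre-injectivity makes the asymptotic preimage unique, and the pointwise Lemma~\ref{l:uniform-psot-surjectivity} controls how finitely many single-site changes of the image propagate to that unique preimage; the finite memory of $\sigma_s^{-1}$ is then extracted without invoking injectivity or post-surjectivity of $\sigma_p$ for limit points $p$. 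As written, your route through $\Sigma(s)$ would require the additional hypothesis of stable post-surjectivity, so this step needs to be repaired.
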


\section{Stably post-surjective ANUCA} 
\label{s:stably-post0surjective}

As for stable injectivity, we introduce the following notion of stably post-surjective ANUCA. 

\begin{definition}
Let $M$  be a subset of a group $G$ and let $S = A^{A^M}$ where $A$ is a set. 
Given $s \in S^G$, the ANUCA $\sigma_s$ is said to be  \emph{stably post-surjective} if for every $p \in \Sigma(s)$, the ANUCA $\sigma_p$ is post-surjective.
\end{definition}
\par 
With the above notation,  suppose that $\sigma_s$ is stably post-surjective for some $s \in S^G$. It is then  immediate that  $\sigma_p$ is also stably post-surjective for every $p \in \Sigma(s)$. It suffices to observe that 
$\Sigma(p) \subset \Sigma(s)$ (see the proof of Lemma~\ref{l:pseudo-equivarian}). 
\par 
Observe also that for every constant configuration $c \in S^G$, the subset $\Sigma(s) \subset S^G$ reduces to the single configuration $c$. Consequently, 
it follows immediately from the above definition that every post-surjective CA is stably post-surjective. 
\par 
We now state and prove the fundamental uniform post-surjectivity property of stably post-surjective ANUCA. 

\begin{lemma}
[Uniform post-surjectivity] 
\label{l:uniform-post-surjectivity-all} 
Let $M$  be a finite subset of a  countable group $G$. Let $A$ be a finite set and let $S=A^{A^M}$. Suppose that $\sigma_s$ is stably  post-surjective for some $s \in S^G$. Then there exists a finite subset $E\subset G$ such that for all $g \in G$ and $x, y\in A^G$ with $y\vert_{G \setminus \{g\}} =\sigma_s(x)\vert_{G \setminus \{g\}}$, there exists $z \in A^G$ such that $\sigma_s(z)=y$ and 
$z\vert_{G \setminus gE}= x\vert_{G \setminus gE}$.
\end{lemma}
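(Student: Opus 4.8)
The plan is to upgrade the pointwise statement of Lemma~\ref{l:uniform-psot-surjectivity} to one that is uniform in the base point $g \in G$, using the stronger hypothesis of stable post-surjectivity together with the compactness of $\Sigma(s)$ in $S^G$. First I would invoke Lemma~\ref{l:pseudo-equivarian}: since $\sigma_s$ is stably post-surjective, so is $\sigma_{gs}$ for every $g \in G$, and moreover by Lemma~\ref{l:post-injective-stable} every $p \in \Sigma(s)$ again has $\Sigma(p) \subset \Sigma(s)$, so each $\sigma_p$ is post-surjective. By the equivariance relation \eqref{e:soft-equivariant}, the conclusion of the lemma for a pair $(g, \sigma_s)$ is equivalent to the conclusion for $(1_G, \sigma_{g^{-1}s})$; so it suffices to produce a single finite set $E \subset G$ that works as the ``$g = 1_G$'' radius simultaneously for $\sigma_p$ for \emph{every} $p \in \Sigma(s)$.

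Next I would run the contradiction argument of Lemma~\ref{l:uniform-psot-surjectivity} in a ``compactified'' form. Suppose no such finite $E$ exists; take an exhaustion $(E_n)_{n \in \N}$ of $G$ with $1_G \in E_0$ and $E_n M \subset E_{n+1}$. For each $n$ there is $p_n \in \Sigma(s)$ and configurations $x_n, y_n \in A^G$ with $y_n\vert_{G \setminus \{1_G\}} = \sigma_{p_n}(x_n)\vert_{G \setminus \{1_G\}}$, yet no $z$ agreeing with $x_n$ off $E_n$ satisfies $\sigma_{p_n}(z) = y_n$. Since $S$ is finite, $S^G$ is compact and $\Sigma(s)$ is closed, hence compact; likewise $A^G \times A^G$ is compact. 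So after passing to a subsequence I may assume $p_n \to p \in \Sigma(s)$, $x_n \to x$, $y_n \to y$. Passing the asymptoticity relation to the limit using the joint continuity Lemma~\ref{l:continuity-x} (applied to $\sigma_{p_n}(x_n) \to \sigma_p(x)$) gives $y\vert_{G \setminus \{1_G\}} = \sigma_p(x)\vert_{G \setminus \{1_G\}}$, so $y$ is asymptotic to $\sigma_p(x)$. By post-surjectivity of $\sigma_p$ there is $w$ asymptotic to $x$ with $\sigma_p(w) = y$, say $w\vert_{G \setminus E_k} = x\vert_{G \setminus E_k}$.

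Finally I would splice, exactly as in the proof of Lemma~\ref{l:uniform-psot-surjectivity}, but now also using convergence of $p_n$. Choose $m > k$ with $x_m\vert_{E_k M^2} = x\vert_{E_k M^2}$, $y_m\vert_{E_k M^2} = y\vert_{E_k M^2}$, and crucially $p_m\vert_{E_k M} = p\vert_{E_k M}$ (possible since $p_n \to p$). Define $z$ by $z\vert_{E_k M^2} = w\vert_{E_k M^2}$ and $z\vert_{G \setminus E_k} = x_m\vert_{G \setminus E_k}$; this is consistent because $w$ and $x_m$ both agree with $x$ on $E_k M^2 \setminus E_k$. On $G \setminus E_k M$ the local rule reads only $z\vert_{G \setminus E_k} = x_m\vert_{G \setminus E_k}$, so $\sigma_{p_m}(z)\vert_{G \setminus E_k M} = \sigma_{p_m}(x_m)\vert_{G \setminus E_k M} = y_m\vert_{G \setminus E_k M}$; on $E_k M$ the value of $\sigma_{p_m}(z)$ depends only on $z\vert_{E_k M^2} = w\vert_{E_k M^2}$ and on $p_m\vert_{E_k M} = p\vert_{E_k M}$, hence equals $\sigma_p(w)\vert_{E_k M} = y\vert_{E_k M} = y_m\vert_{E_k M}$. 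Thus $\sigma_{p_m}(z) = y_m$ while $z\vert_{G \setminus E_m} = x_m\vert_{G \setminus E_m}$, contradicting the choice of $x_m, y_m, p_m$. The main obstacle — and the only real difference from Lemma~\ref{l:uniform-psot-surjectivity} — is keeping track of the varying local rules $p_n$: one must carry $p_n \to p$ through every compactness step and ensure the final splicing agrees with $p$ on the window $E_k M$ where the two pieces of $\sigma_{p_m}(z)$ are computed differently; the extra input that makes this work is precisely that $p_n$ converges, so its values stabilize on any prescribed finite window.
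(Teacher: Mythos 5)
Your proposal is correct and follows essentially the same route as the paper: a contradiction argument combining compactness of $\Sigma(s)$ and of $A^G$, stable post-surjectivity of the limit rule $p$, and the same splicing of $w$ with $x_m$ on the window $E_kM^2$. The only (harmless) difference is organizational: you reformulate via the equivariance relation up front so that all failures occur at $1_G$ with varying $p_n\in\Sigma(s)$, whereas the paper translates the failing points $g_n$ to the identity inside the contradiction argument and works with the translates $g_n^{-1}s$; both reduce to the identical limit-and-splice computation.
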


\begin{proof}
By Lemma~\ref{l:uniform-psot-surjectivity}, we can find a finite subset $E$ such that  for all $x, y\in A^G$ with $y\vert_{G \setminus \{1_G\}} =\sigma_s(x)\vert_{G \setminus \{1_G\}}$, there exists $z \in A^G$ such that $\sigma_s(z)=y$ and 
$z\vert_{G \setminus E}= x\vert_{G \setminus E}$.
\par 
We can clearly suppose that $1_G\in M$ and $M$ is symmetric, i.e., $M=M^{-1}$. Moreover, 
since $G$ is countable, we can find an increasing sequence $(E_n)_{n \in \N}$ of finite subsets of $G$ such that $G= \cup_{n \in \N}E_n$,  $1_G \in E_0$, and for every $n \in \N$: 
\[
E_n M \subset E_{n+1}. 
\] 
\par 
We suppose on the contrary that for every $n \in \N$, there exists $g_n \in G$ and $u_n,v_n \in A^G$ such that $v_n\vert_{G \setminus \{g_n\}} =\sigma_s(u_n)\vert_{G \setminus \{g_n\}}$ but for all $z \in A^G$ with   $\sigma_s(z)=v_n$, one must have  
$z\vert_{G \setminus g_nE_n} \neq  u_n\vert_{G \setminus g_nE_n}$.  
\par 
For $n \in \N$, let us denote 
$s_n= g_n^{-1}s$, 
$x_n= g_n^{-1} u_n$, and 
$y_n= g_n^{-1} v_n$. Then  Lemma~\ref{l:pseudo-equivarian} implies that 
$y_n\vert_{G \setminus \{1_G\}} =\sigma_{s_n}(x_n)\vert_{G \setminus \{1_G\}}$ and for  $z \in A^G$ with   $\sigma_{s_n}(z)=y_n$, one has 
$z\vert_{G \setminus E_n} \neq  x_n\vert_{G \setminus E_n}$.  
\par 
Since $A^G$ and $S^G$ are  compact by Tychonoff's theorem, we can pass  to a subsequence and suppose without loss of generality that 
$\lim_{n \to \infty} x_n=x$ and  $\lim_{n \to \infty}y_n=y$ for some $x,y \in A^G$ and $\lim_{n \to \infty}s_n=p$ for some $p \in \Sigma(s)$. 
\par 
Consequently, by taking the limit of the relation $y_n\vert_{G \setminus \{1_G\}} =\sigma_{s_n}(x_n)\vert_{G \setminus \{1_G\}}$  when $n$ goes to $\infty$, we deduce from Lemma~\ref{l:continuity-x}   that 
\begin{align}
\label{e:stable-post-surjective-1-2-3} 
y\vert_{G \setminus \{1_G\}} =\sigma_{p}(x)\vert_{G \setminus \{1_G\}}. 
\end{align}
\par 
It follows from the stable post-surjectivity of $\sigma_s$ that $\sigma_p$ is also post-surjective. Hence, we deduce from \eqref{e:stable-post-surjective-1-2-3} that there exists a configuration $w \in A^G$ asymptotic to $x$ such that $\sigma_p(w)=y$. In particular, we have $w\vert_{G \setminus E_k} = x \vert_{G \setminus E_k}$ for some $k \in \N$. 
\par 
Now choose $m >k$ large enough so that 
\[
x_m \vert_{E_kM^2} = x  \vert_{E_k M^2}, \quad y_m \vert_{E_kM} = y  \vert_{E_k M}, \quad s_m \vert_{E_kM} = p \vert_{E_kM}. 
\]
\par 
It follows that $x_m \vert_{E_kM^2 \setminus E_k} = w  \vert_{E_k M^2 \setminus E_k}$ and we can define $z \in A^G$ by: 
\[
z \vert_{E_kM^2} = w \vert_{E_kM^2}, \quad z \vert_{G \setminus E_k}= x_m\vert_{G \setminus E_k}. 
\] 
\par 
Consequently, we deduce from  the formula  \eqref{e:induced-local-maps}-\eqref{e:induced-local-maps-general} that: 
\begin{align*}
    \sigma_{s_m}(z)\vert_{G \setminus E_kM} & = \sigma_{s_m}(x_m)  \vert_{G \setminus E_kM} & (\text{as }z \vert_{G \setminus E_k}= x_m\vert_{G \setminus E_k})\\
    & = y_m \vert_{G \setminus E_kM} & (\text{as } 1_G \in E_kM). 
\end{align*}
\par 
On the other hand, we find  that: 
\begin{align*}
    \sigma_{s_m}(z) \vert_{E_k M} & = \sigma_{s_m}(w)\vert_{E_k M} & (\text{as }z \vert_{E_kM^2} = w \vert_{E_kM^2} )\\ 
    & = \sigma_p(w)\vert_{E_k M} & (\text{as } s_m \vert_{E_kM} = p \vert_{E_kM}) \\
    & = y \vert_{E_k M} & (\text{as } \sigma_p(w)=y) \\
    & = y_m \vert_{E_k M}. &  
\end{align*}
\par 
Therefore, we deduce that $ \sigma_{s_m}(z) = y_m$. However, since  by construction  $z \vert_{G \setminus E_m} = x_m\vert_{G \setminus E_m}$ as $E_k \subset E_m$, we obtain a contradiction to the choice of $x_m$ and $y_m$. The proof of the lemma is thus complete. 
\end{proof}

\par 

The next results imply that the above uniform post-surjectivity is a stable property when passing to the limit of the configurations of local defining maps.

\begin{theorem}
\label{t:stably-uniform-post-surj-aysn-main}
Let $M$  be a finite subset of a countable group $G$. Let $A$ be a finite set and let $s \in S^G$ where  $S=A^{A^M}$. Suppose that $\sigma_s$ is stably  post-surjective. Then there exists $E \subset G$ finite such that for all $p \in \Sigma(s)$, $g \in G$, and $x, y\in A^G$ with  $y\vert_{G \setminus \{g\}} =\sigma_p(x)\vert_{G \setminus \{g\}}$, there exists $z \in A^G$ such that $\sigma_p(z)=y$ and 
$z\vert_{G \setminus gE}= x\vert_{G \setminus gE}$. 
\end{theorem}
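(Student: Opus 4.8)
The plan is to combine the pointwise uniform post-surjectivity already established in Lemma~\ref{l:uniform-post-surjectivity-all} with a compactness argument that transfers the single finite set $E$ (valid for $\sigma_s$) to all the ANUCA $\sigma_p$ with $p \in \Sigma(s)$ simultaneously. First I would fix the finite set $E \subset G$ produced by Lemma~\ref{l:uniform-post-surjectivity-all}, which works for $\sigma_s$ at every $g \in G$. I claim this same $E$ works for every $p \in \Sigma(s)$. Suppose not: then for every $n \in \N$ there exist $p_n \in \Sigma(s)$, $g_n \in G$, and configurations $u_n, v_n \in A^G$ with $v_n\vert_{G \setminus \{g_n\}} = \sigma_{p_n}(u_n)\vert_{G \setminus \{g_n\}}$ such that no $z \in A^G$ with $\sigma_{p_n}(z) = v_n$ satisfies $z\vert_{G \setminus g_nE} = u_n\vert_{G \setminus g_nE}$. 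As in the proof of Lemma~\ref{l:uniform-post-surjectivity-all}, translate by $g_n^{-1}$: setting $q_n = g_n^{-1}p_n$, $x_n = g_n^{-1}u_n$, $y_n = g_n^{-1}v_n$, Lemma~\ref{l:pseudo-equivarian} gives $y_n\vert_{G \setminus \{1_G\}} = \sigma_{q_n}(x_n)\vert_{G \setminus \{1_G\}}$, and no $z$ with $\sigma_{q_n}(z) = y_n$ satisfies $z\vert_{G \setminus E} = x_n\vert_{G \setminus E}$. Crucially, by Lemma~\ref{l:post-injective-stable} we have $q_n \in \Sigma(p_n) \subset \Sigma(s)$, so each $q_n$ still lies in $\Sigma(s)$.

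Next I would use compactness of $A^G$ and $S^G$ (Tychonoff, since $A$ and hence $S$ are finite) to pass to a subsequence along which $x_n \to x$, $y_n \to y$ in $A^G$ and $q_n \to q$ in $S^G$; since $\Sigma(s)$ is closed we get $q \in \Sigma(s)$. By Lemma~\ref{l:continuity-x}, taking the limit of $y_n\vert_{G \setminus \{1_G\}} = \sigma_{q_n}(x_n)\vert_{G \setminus \{1_G\}}$ yields $y\vert_{G \setminus \{1_G\}} = \sigma_q(x)\vert_{G \setminus \{1_G\}}$, i.e.\ $y$ is asymptotic to $\sigma_q(x)$. Since $\sigma_s$ is stably post-surjective and $q \in \Sigma(s)$, $\sigma_q$ is post-surjective, so there exists $w \in A^G$ asymptotic to $x$ with $\sigma_q(w) = y$; fix $k$ with $w\vert_{G \setminus E_k} = x\vert_{G \setminus E_k}$ for a fixed exhaustion $(E_n)$ of $G$ with $1_G \in E_0$, $E_nM \subset E_{n+1}$, and $M$ symmetric containing $1_G$.

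Finally I would run the same splicing construction as in Lemma~\ref{l:uniform-post-surjectivity-all}: choose $m > k$ large so that $x_m\vert_{E_kM^2} = x\vert_{E_kM^2}$, $y_m\vert_{E_kM} = y\vert_{E_kM}$, and $q_m\vert_{E_kM} = q\vert_{E_kM}$; then $x_m$ and $w$ agree on $E_kM^2 \setminus E_k$, so $z \in A^G$ defined by $z\vert_{E_kM^2} = w\vert_{E_kM^2}$ and $z\vert_{G \setminus E_k} = x_m\vert_{G \setminus E_k}$ is well defined. Using formulas \eqref{e:induced-local-maps}--\eqref{e:induced-local-maps-general}, one checks $\sigma_{q_m}(z)\vert_{G \setminus E_kM} = \sigma_{q_m}(x_m)\vert_{G \setminus E_kM} = y_m\vert_{G \setminus E_kM}$ (since $1_G \in E_kM$) and $\sigma_{q_m}(z)\vert_{E_kM} = \sigma_{q_m}(w)\vert_{E_kM} = \sigma_q(w)\vert_{E_kM} = y\vert_{E_kM} = y_m\vert_{E_kM}$, hence $\sigma_{q_m}(z) = y_m$. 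But $z\vert_{G \setminus E_m} = x_m\vert_{G \setminus E_m}$ since $E_k \subset E_m$, contradicting the choice of $x_m, y_m$. This proves the claim and the theorem. The main obstacle is the bookkeeping needed to ensure $q_n$ stays inside $\Sigma(s)$ after the translation (handled by Lemma~\ref{l:post-injective-stable}) and that the limit $q$ inherits post-surjectivity — everything else is a near-verbatim repetition of the argument in Lemma~\ref{l:uniform-post-surjectivity-all}.
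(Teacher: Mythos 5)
Your overall strategy --- showing that the single finite set $E$ produced by Lemma~\ref{l:uniform-post-surjectivity-all} already works for every $p \in \Sigma(s)$ --- is the right one, and the translation step (using Lemma~\ref{l:pseudo-equivarian} and the fact that $g_n^{-1}p_n \in \Sigma(p_n) \subset \Sigma(s)$) is correct. But the proof by contradiction breaks down at the very last step. The statement you are negating involves the \emph{fixed} set $E$: your hypothesis for index $m$ is that no $z$ with $\sigma_{q_m}(z)=y_m$ satisfies $z\vert_{G\setminus E}=x_m\vert_{G\setminus E}$. The spliced configuration you construct satisfies only $z\vert_{G\setminus E_k}=x_m\vert_{G\setminus E_k}$ (hence $z\vert_{G\setminus E_m}=x_m\vert_{G\setminus E_m}$), where $k$ is determined by how far the preimage $w$ of $y$ under $\sigma_q$ differs from $x$ --- a quantity over which you have no control and which has nothing to do with $E$. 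Since $G\setminus E_m \subset G\setminus E$, agreement on $G\setminus E_m$ is strictly \emph{weaker} than agreement on $G\setminus E$, so the constructed $z$ does not contradict the choice of $x_m,y_m$. This same splicing step was legitimate inside the proof of Lemma~\ref{l:uniform-post-surjectivity-all} only because there the negated statement quantifies over \emph{all} finite sets, so the counterexample at stage $n$ could be taken relative to $E_n$; here the negation concerns one fixed $E$ and that escape is no longer available.

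The paper avoids this by arguing directly rather than by contradiction. Given $p\in\Sigma(s)$ with $g_ns\to p$, and data $g,x,y$ with $y\vert_{G\setminus\{g\}}=\sigma_p(x)\vert_{G\setminus\{g\}}$, it defines approximants $y_n$ by $y_n\vert_{G\setminus\{g\}}=\sigma_{g_ns}(x)\vert_{G\setminus\{g\}}$ and $y_n(g)=y(g)$, applies Lemma~\ref{l:uniform-post-surjectivity-all} (after translating by $g_n^{-1}$) to produce $z_n$ with $\sigma_{g_ns}(z_n)=y_n$ and $z_n\vert_{G\setminus gE}=x\vert_{G\setminus gE}$, and then passes to a convergent subsequence $z_n\to z$. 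The crucial point is that the constraint $z_n\vert_{G\setminus gE}=x\vert_{G\setminus gE}$ is imposed at every stage with the \emph{same} fixed $E$ and is a closed condition, so it survives the limit, while $\sigma_p(z)=y$ follows from Lemma~\ref{l:continuity-x}. If you want to keep a contradiction framework, you would have to redo the entire diagonal argument of Lemma~\ref{l:uniform-post-surjectivity-all} from scratch with $E$ replaced by the exhaustion $E_n$ \emph{and} with the configurations of local defining maps allowed to range over $\Sigma(s)$; reusing the fixed $E$ together with the splicing construction, as written, does not close the argument.
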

\par 
\begin{proof}
Let $E \subset G$ be the subset given by Lemma~\ref{l:uniform-post-surjectivity-all} and let $p \in \Sigma(s)$. Then we can find a sequence $(g_n)_{n \in \N}$ in $G$ such that $\lim_{n \to \infty} g_n s = p$. 
\par 
Let $g \in G$ and $x, y\in A^G$ such that $y\vert_{G \setminus \{g\}} =\sigma_p(x)\vert_{G \setminus \{g\}}$. For every $n \in \N$, we define $y_n \in A^G$ be setting 
\[
y_n\vert_{G \setminus \{g\}}  =  \sigma_{g_ns} (x) \vert_{G \setminus \{g\}} , \quad y_n(g)= y(g)
\]
\par 
Since $\lim_{n \to \infty} g_n s = p$, we deduce from Lemma~\ref{l:continuity} that:
\begin{align*}
   \lim_{n \to \infty} y_n\vert_{G \setminus \{g\}}  = \lim_{n \to \infty}
    \sigma_{g_ns} (x)\vert_{G \setminus \{g\}}  = \sigma_p (x)\vert_{G \setminus \{g\}} = y \vert_{G \setminus \{g\}}. 
\end{align*}
\par 
In particular, we have  $\lim_{n \to \infty} y_n= y$ since $y_n(g)=y(g)$ for all $n \in \N$.  
\par 
Fix $n \in \N$. We infer from  Lemma~\ref{l:pseudo-equivarian} that 
$ (g_n \sigma_{s}(g_n^{-1}x))\vert_{G \setminus \{g\}} = y_n \vert_{G \setminus \{g\}}$. It follows that $\sigma_s(g_n^{-1}x)\vert_{G \setminus \{g_n^{-1}g\}} = (g_n^{-1}y_n)\vert_{G \setminus \{g_n^{-1}g\}}$. Hence, by Lemma~\ref{l:uniform-post-surjectivity-all}, we can find $t \in A^G$ such that $t\vert_{G \setminus g_n^{-1}gE}= (g_n^{-1}x )\vert_{G \setminus g_n^{-1}gE}$ and $\sigma_s(t) = g_n^{-1}y_n$. 
\par 
Let $z_n = g_n t$ then we deduce again  from Lemma~\ref{l:pseudo-equivarian} that  
\begin{equation}
\label{e:z-stable-post-surjective-3-2}
z_n \vert_{G \setminus gE}= x \vert_{G \setminus gE}, \quad \sigma_{g_n s}(z_n)= y_n.  
\end{equation}
\par 
Since $A^G$ is compact, we can pass to a subsequence and suppose without loss of generality that $\lim_{n \to \infty} z_n= z$ for some $z \in A^G$. It follows from \eqref{e:z-stable-post-surjective-3-2} that $z \vert_{G \setminus gE}= x \vert_{G \setminus gE}$. 
\par 
Moreover, since $\lim_{n \to \infty} y_n= y$, we have $\sigma_{p}(z)=y$ by Lemma~\ref{l:continuity-x}. This proves that $E$ satisfies the required condition in the conclusion of the theorem and the proof is thus complete.  
\end{proof} 

\section{Counter-examples} 
\label{s:counter}

 We present a simple  example of an ANUCA which is injective but not stably injective, not surjective, and not reversible. Hence, we obtain  counter-examples to Theorem~\ref{t:intro-characterization-reversible-ANUCA}  and Theorem~\ref{t:reversible-finite-alpha} when we replace the stable injectivity hypothesis by the weaker injectivity hypothesis.  
 
 \begin{example}
 \label{ex:1}
Let $G= \Z$ and let $A=\{0,1\}$. Let $M=\{-1,0,1\}$ and consider the functions $f,g \colon A^M \to A$ defined for all $(u,v,w)\in A^M$ by the following formula: 
\begin{equation}
    \label{e:counter-1-2}
f(u,v,w) \coloneqq w,  \quad g(u,v,w) \coloneqq  u+v \quad (\text{mod }2). 
\end{equation}
\par 
Let $S=A^{A^M}$ and let $p,q \in S^\Z $ where $p(n)=f$ and $q(n)=g$ for all $n\in \Z$. For every $k \in \Z$, we  define $s_k \in S^\Z$  by setting $s_k(n)=f$ if $n \leq k$ and $s(n)=g$ if $n \geq k+1$.  
\par 
Denote $s=s_0$ then it is clear that $\Sigma(s)=\{ s_k\colon k \in \Z \}\cup \{p,q\}$. We claim that the ANUCA $\sigma_s\colon A^\Z \to A^\Z$ is injective but not stably injective. 
\par
Indeed, suppose that $\sigma_s(x)=\sigma_s(y)$ for some $x,y \in A^\Z$. Then we deduce from 
\eqref{e:counter-1-2} that $x(n)=y(n)$ for all $n \leq 1$ and $x(n)+x(n+1)=y(n)+y(n+1)$ (mod 2) for all $n \geq 0$. It follows immediately that $x(n)=y(n)$ for all $n \in \Z$. We conclude that $x=y$ and thus $\sigma_s$ is injective. On the other hand, observe that $\sigma_q$ is not injective since $\sigma_q(0^\Z)=\sigma_s(1^\Z)=0^\Z$. As $q \in \Sigma(s)$, we conclude that $\sigma_s$ is not stably injective. 
\par 
We claim that $\sigma_s$ is not surjective. Indeed, let $c \in A^\Z$ be the configuration given by $c(n)=0$ for all $n \leq 0$ and $c(n)=1$ for all $n \geq 1$. Suppose on the contrary that $\sigma_s(x)=c$ for some $x \in A^\Z$. It follows from the definition of $s$ and \eqref{e:counter-1-2} that 
$x(n)=c(n-1)=0$ for all $n \geq 1$ and $x(0)+x(1)= c(1)$. Hence  $c(1)=0$ and we obtain a contradiction. 
\par 
Finally, let $x \in A^G$ and put $y=\sigma_s(x)$. By a direct induction, we infer from the relations $y(n)=x(n+1)$ for $n \leq 0$ and $y(n)=x(n)+x(n-1)$ (mod 2) for $n \geq 1$ that for all $n \geq 2$, we have: 
\[
x(n)= 
y(n) - y(n-1) + \dots + (-1)^{n+1} y(-1) \text{ (mod }2) 
\]
and that $y\vert_{\N_{\geq 2}}$ can take any value in $A^{\N_{\geq 2}}$ (in fact, the only requirement for $y$ is that $y(1)=y(0)+y(-1)$).  Hence, $x(n)$ must depend on $y(n), \dots, y(2)$ for all $n \geq 2$. Consequently, if   $\tau \colon A^\Z \to A^\Z$ is an ANUCA such that $\tau \circ \sigma_s=\Id$ then $\tau$ cannot have finite memory and we conclude that $\sigma_s$ is not reversible (see Section~\ref{s:reversible-stablyinjective}).  
\end{example} 

\par 
The next similar example shows that unlike CA, a bijective ANUCA is not necessarily reversible or stably injective.
\begin{example} 
\label{ex:2}
Let $G= \Z$ and let $A=\{0,1\}$. Let $M=\{-1,0\}$ and consider the functions $f,g \colon A^M \to A$ defined for all $(u,v,w)\in A^M$ by the following formula: 
\begin{equation}
    \label{e:counter-1-2-3}
f(u,v) \coloneqq v,  \quad g(u,v) \coloneqq  u+v \quad (\text{mod }2). 
\end{equation} 
\par 
Let $S=A^{A^M}$ and for $k \in \Z\cup \{\pm \infty\}$, we  define $s_k \in S^\Z$  by setting $s_k(n)=f$ if $n \leq k$ and $s(n)=g$ if $n \geq k+1$.  
Denote $s=s_0$ then we claim  that $\sigma_s\colon A^\Z \to A^\Z$ is bijective but it is not reversible and thus not stably injective by Theorem~\ref{t:reversible-finite-alpha}. 
\par 
Indeed,  let $x \in A^G$ and  $y=\sigma_s(x)$. As in Example~\ref{ex:1}, we infer from \eqref{e:counter-1-2-3} that  $x(n)=y(n)$ for $n \leq 0$ and  we have for all $n \geq 1$: 
\[
x(n)= 
y(n) - y(n-1) + \dots + (-1)^{n} y(0) \text{ (mod }2).  
\]
\par 
It follows immediately that $\sigma_s$ is bijective but it is not reversible since $x(n)$ depends on $y(0), \dots, y(n)$ for all $n \geq 1$. The fact that $\sigma_s$ is not stably injective can also be seen directly by checking that $\sigma_{s_{-\infty}}$ is not injective. 
\end{example}
\par 
Finally, we present an example of a very simple one-dimensional stably injective ANUCA which is not surjective. 

\begin{example}
\label{ex:3} 
Let $G= \Z$ and let $A=\{0,1\}$. Let $M=\{-1,0,1\}$ and consider the functions $f,g, h \colon A^M \to A$ defined for all $(u,v,w)\in A^M$ by the following formula: 
\begin{equation}
    \label{e:counter-1-2-3}
f(u,v,w) \coloneqq w,  \quad g(u,v,w) \coloneqq  u, \quad h(u,v,w)=v \quad (\text{mod }2). 
\end{equation} 
\par 
Let $S =A^{A^M}$ and consider $s \in S^\Z$ defined by $s(n)=f$ if $n \leq -1$, $s(n)=h$ if $n=0$, and $s(n)=g$ if $n \geq 1$. 
Then $\sigma_s$ is injective since 
$\sigma_s(x)=y$ implies that 
$x(n)=y(n-1)$ for $n \leq 0$ and $ x(n)=y(n+1)$ for $n \geq 0$. 
\par Let $p, q\in S^\Z$ where $p(n)=f$ and $q(n)=g$ for all $n \in \N$ then it is clear that $\sigma_p$ and $\sigma_q$ are injective. Since $\Sigma(s)= \{p,q,s\}$, we deduce that $\sigma_s$ is stably injective. 
On the other hand, $\sigma_s$ is not surjective since we can check directly that  
\[\im \sigma_s = \{y \in A^\Z\colon y(-1)=y(0)=y(1)\}.
\]
\end{example}

\bibliographystyle{siam}

\end{document}